\documentclass[11pt,reqno]{amsproc}

\title{Suppression of the Benjamin--Feir instability by constant vorticity}

\author{Bowen Chen}
\address{Department of Mathematics, University of Maryland, College Park, MD 20742}
\email{bchen0@umd.edu} 

\author{Huy Q. Nguyen}
\address{Department of Mathematics, University of Maryland, College Park, MD 20742}
\email{hnguye90@umd.edu}

\subjclass[2020]{Primary 76B15, 35B35;
  Secondary 76E17, 35Q35}

\keywords{Stokes waves, constant vorticity,
  Benjamin--Feir instability, spectral stability}

\usepackage{amsmath}
\usepackage{amsthm}
\usepackage{amssymb}
\usepackage{booktabs}
\usepackage{mathrsfs}
\usepackage[margin=1in]{geometry}
\usepackage{times}
\usepackage{setspace}
\usepackage{graphicx}
\usepackage{listings}
\usepackage{float}
\usepackage{tabularx}
\usepackage{stmaryrd}
\usepackage{subfigure}
\usepackage{listings}
\usepackage{verbatim} 
\usepackage{multirow}
\usepackage[table]{xcolor}
\definecolor{g}{rgb}{0,0.5,0}%

\usepackage{tikz}
\usetikzlibrary{decorations.pathmorphing}
\usepackage{mathtools}
\usepackage{bigstrut}
\usepackage{esint}
\usepackage{extarrows}
\usepackage{newtxtt}
\usepackage{times}
\usepackage{color}
\usepackage{tikz-cd}
\usepackage{hyperref}

\theoremstyle{plain}
\newtheorem{theo}{Theorem}[section]
\newtheorem{prop}[theo]{Proposition}
\newtheorem{lemm}[theo]{Lemma}

\newtheorem{defi}[theo]{Definition}
\theoremstyle{definition}
\newtheorem{rema}[theo]{Remark} 

\numberwithin{equation}{section}
\usepackage{tcolorbox}
\tcbset{
  colframe=black!75,
  colback=gray!10,
  coltitle=black,
  fonttitle=\bfseries,
  sharp corners,
  boxrule=0.5pt,
}
\newtcolorbox{mmain}[1][]{%
  colback=white,
  colframe=red!20,
  fonttitle=\bfseries,
  title={Input:},
  sharp corners,
  boxrule=0.5pt,
  #1
}

\newtcolorbox{mmaout}[1][]{%
  colback=gray!5,
  colframe=cyan!20,
  fonttitle=\bfseries,
  title={Output:},
  sharp corners,
  boxrule=0.5pt,
  #1
}

\newtcolorbox{mmacode}[1][]{%
  colback=white,
  colframe=green!20,
  fonttitle=\bfseries,
  title={Code:},
  sharp corners,
  boxrule=0.5pt,
  #1
}

\DeclareMathOperator{\Span}{span}
\newcommand{\bq}{\begin{equation}}
\newcommand{\eq}{\end{equation}}

\newcommand{\R}{\mathbb{R}}
\newcommand{\F}{\mathcal{F}}
\def\L{\mathcal{L}}
\def\Ls{\mathscr{L}}
\def\sign{\mathrm{sgn}}
\def\Z{\mathbb{Z}}
\def\C{\mathbb{C}}
\def\pinv{\p_x^{-1}}
\def\rev{\mathfrak{r}}
\def\Ush{U^\sharp}
\def\B{\mathcal{B}}
\def\Lmat{\mathtt{L}}
\def\fD{\mathfrak{D}}
\def\signp{\sign^+}
\def\errorTol{O(\eps^3,\nu^2\eps^2,\nu^4\eps,\nu^6)}

\def\res{\operatorname{Res}_{\lambda=0}}
\def\bracketNu#1{^{[#1]}}
\def\anaR#1{\R\{#1\}}
\def\anaC#1{\C\{#1\}}
\def\sC{\mathscr{C}}
\def\sE{\mathscr{E}}

\def\critmu{\underline\mu}
\def\condMuEps#1#2{\eps\in(-\eps_#2,\eps_#2),\mu\in(0,\mu_#1)}
\newcommand{\vet}[2]{\begin{bmatrix}#1 \\ #2 \end{bmatrix}}
\def\velo{\mathbf{u}}
\def\newRev{\mathcal{R}}
\def\rL{\mathring{L}}

\DeclareMathOperator{\RE}{Re}

\DeclareMathOperator{\IM}{Im}

\DeclareMathOperator{\I}{I}

\def\eps{\varepsilon}
\def\na{\nabla}

\def\wh{\widehat}

\def\mez{\frac{1}{2}}

\def\Rr{\mathbb{R}}
  
\def\Nn{\mathbb{N}}
\def\Zz{\mathbb{Z}}
\def\Cc{\mathbb{C}}

\def\cK{\mathcal{K}}

\def\cT{\mathcal{T}}
\def\cL{\mathcal{L}}

\def\cH{\mathcal{H}}
\def\cV{\mathcal{V}}
\def\ld{\lambda}

\def\p{\partial}

\def\na{\nabla}

\def\ol{\overline}
\def\T{\mathbb{T}}

\def\wt{\widetilde}

\def\Om{\Omega}
\def\ld{\lambda}

\newcommand{\rem}[1]{O(\eps^{#1})}

\def\L{\mathcal{L}}

\renewcommand{\th}{\mathtt{t}_h}

\begin{document}
\maketitle

\begin{abstract}
We study the effect of constant vorticity on the Benjamin--Feir instability of small-amplitude Stokes waves in infinite depth. Working with the full water-wave equations, we rigorously establish a transition between modulational instability and stability as the vorticity crosses a critical value. We consider right-propagating waves with constant vorticity $\gamma$, associated with the background shear current $-\gamma y\mathbf e_x$, and normalize the gravitational acceleration and carrier wavenumber to one. For each fixed $\gamma<2/\sqrt{3}$, we prove that sufficiently small-amplitude Stokes waves are Benjamin--Feir unstable. For each fixed $\gamma>2/\sqrt{3}$, these waves are spectrally stable with respect to longitudinal, long-wavelength perturbations. These results rigorously confirm, away from the critical value, the stability transition predicted by the formal nonlinear Schrödinger approximation of Thomas, Kharif, and Manna.
\end{abstract}

\tableofcontents

% We define the transform in Wahlen's paper
\def\WahlenMat{
\begin{bmatrix}
1 & 0\\
\frac{\gamma}{2}\pinv & 1
\end{bmatrix}
}
\def\WahlenMatPi{
\begin{bmatrix}
1 & 0\\
\frac{\gamma}{2}\pinv\Pi_0^\perp & 1
\end{bmatrix}
}

\section{Introduction}
This work concerns the stability of periodic traveling wave solutions to the two-dimensional water wave problem. For irrotational flows, such waves were first constructed by Stokes \cite{Stokes1847} in 1847 and later  rigorously justified by Nekrasov \cite{Nekrasov}, Levi-Civita \cite{Levi_Civita}, and Struik \cite{Struik1926}. It was long believed since then that small-amplitude Stokes waves are stable until the discovery of Benjamin and Feir \cite{BenjaminFeir1967} in 1967, who, through experiments and formal analysis found that Stokes waves are unstable with the respect to {\it longitudinal, long-wave perturbations}. The first rigorous proof of this {\it Benjamin--Feir (modulational) instability} was found for the finite-depth case by Bridges and Mielke \cite{finite_depth_BF_instability_1995} in 1995. The infinite depth case was recently resolved by Nguyen and Strauss \cite{Huy21}. After that, in the series of works \cite{Berti_Full, Berti_finite_depth, critical_depth_unstable}, Berti, Maspero, and Ventura rigorously justified the complete Benjamin--Feir spectrum which forms a {\it figure-8 curve} near the origin of the complex plane, in both the finite and infinite depth cases. We also refer to \cite{Yang} and \cite{Hur} for alternative proofs of the Benjamin--Feir instability. On the other hand, there is a vast literature on numerical and formal analysis for the stability of Stokes waves with respect to various types of perturbations, e.g. \cite{Bryant, McLean1, McLean2, DecOli, OliverasDeconinck2015, Creedon_high_freq}. The methods and techniques developed in the aforementioned rigorous works have led to rigorous proof of {\it high-frequency instabilities} \cite{Hur, BertiMasperoVentura2025, Berti_isola}  and {\it transverse instabilities} \cite{Huy_transverse, Huy_transverse_finite_depth, Transverse-RSY, Berti-McLean} of irrotational gravity Stokes waves. Regarding irrotational capillary-gravity Stokes waves, we refer to \cite{HurYang-capillary, HsiaoMaspero2025, HsiaoWang2026} for Benjamin--Feir and high-frequency instabilities, and \cite{Truong_transverse} for transverse instabilities. 

These advances provide a detailed understanding of the instability of Stokes waves in the irrotational setting. In many physical situations, however, surface waves propagate on currents whose horizontal velocity varies with depth, resulting in nonzero vorticity. Such shear currents occur in coastal and tidal flows and in wind-driven surface layers \cite{Lentz1992, StaceyMonismithBurau1999}. Unlike a uniform current, which can be removed by a change of reference frame, vertical shear modifies the intrinsic dispersion and nonlinear interactions of surface waves. Understanding its effect on the instability of Stokes waves is therefore a natural extension of the irrotational theory. A fundamental case is that of constant vorticity, corresponding to a background current with a linear velocity profile, ${\bf u_s}=-\gamma y\mathbf e_x$, where $\gamma\in \Rr$ is the induced constant vorticity. This provides a tractable model of wave-current interaction while retaining the effects of vertical shear \cite{ConstantinKalimerisScherzer2015}. Moreover, constant vorticity is preserved by the two-dimensional incompressible Euler evolution, making the assumption consistent with the fully nonlinear dynamics \cite{ConstantinIvanovProdanov2008, Wahlen_Hamiltonian, BertiFranzoiMaspero2021}. 

In this work, we investigate the stabilizing/destabilizing effect of constant vorticity on the Benjamin--Feir (modulational) instability of periodic traveling waves. In 2012,  Thomas, Kharif and Manna \cite{ThomasKharifManna2012} approached this problem using a formal multiple-scale expansion. They formally derived a nonlinear Schr\"odinger (NLS) equation governing the slowly varying wave envelope, accounting for the coupling between the background vorticity and the mean flow induced by modulation. Their NLS has the form 
\bq\label{NLS}
ia_\tau+La_{\xi\xi}=M|a|^2a,
\eq
where the constants $L\in \Rr$ and $M\in \Rr$  depend on the constant vorticity $\gamma\in \Rr$ and the fluid's depth $h\in (0, \infty]$. \eqref{NLS} admits plane-wave solutions $a_*= a_0\exp(-iMa_0^2\tau)$, $a_0\in \Rr$. The authors then studied the stability of $a_*$ under long-wave perturbations and found that $a_*$ is stable if \eqref{NLS} is defocusing (i.e., $LM>0$) and unstable  if \eqref{NLS} is focusing (i.e., $LM<0$). In particular, in infinite depth, the plane-wave solution $a_*$ undergoes a transition from modulational instability to stability as $\gamma$ increases through the {\it critical value} 
\bq\label{gammacri}
\gamma_{cri}=2\sqrt\frac{gk}{3},
\eq
 where $g>0$ is the gravitational acceleration and $k$ is the carrier wavenumber. More precisely, $a_*$ is modulationally stable when  $\gamma>\gamma_{cri}$ and modulationally unstable when  $\gamma<\gamma_{cri}$. The same critical value $\gamma_{cri}$ was also predicted in \cite{GuyenneKairzhanSulem2022}  using a Hamiltonian Dysthe equation, a higher-order nonlinear Schr\"odinger envelope model.

In this paper, we rigorously establish the vorticity-induced transition \eqref{gammacri} predicted by Thomas, Kharif and Manna for the full water-wave system in infinite depth. For each fixed constant vorticity $\gamma<\gamma_{cri}$, we prove that sufficiently small-amplitude Stokes waves exhibit Benjamin--Feir instability and fully describe its Benjamin--Feir spectrum. For $\gamma>\gamma_{cri}$, we prove spectral stability with respect to longitudinal, long-wavelength perturbations. Therefore, positive constant vorticity of magnitude larger than $\gamma_{cri}$  suppresses the Benjamin--Feir instability.

In order to state the result, we fix a constant vorticity $\gamma$ and a corresponding  Stokes wave of small amplitude $\eps$ and wave number $k$. We  perturb the Stokes wave by  longitudinal, long-wave perturbations  of Bloch form \(e^{i\mu x}f(x)\), where $f$ is $2\pi$-periodic and $\mu$ is small.  After linearizing the full water wave system around the Stokes wave and performing a number of transformations, we arrive at a linear Hamiltonian operator $\L_{\sqrt\mu,\eps}: H^1(\T)^2\to L^2(\T)^2$, $\T:=\Rr/2\pi \Zz$.  We refer to  \eqref{def:analytic_r} for the notation $r(\mu^{m_1}\eps^{n_1},\dots,\mu^{m_p}\eps^{n_p})$ for analytic remainder functions. We normalize $k=1$ and $g=1$, so that $\gamma_{cri}=\frac{2}{\sqrt{3}}$. Our main result is the following complete description of the spectrum of $\L_{\sqrt\mu,\eps}$ near the origin in the complex plane.
\begin{theo}
    \label{thm:main}
    Let  $\gamma\in \Rr\setminus\{0\}$. There exist $\eps_0,\mu_0>0$, an open set $U\subset \C$ containing 0, and real-valued analytic remainder  functions $r_\pm(\eps)$, $r_j(\mu^2,\mu\eps,\eps^2)$, $j\in\{0, 1\}$,  $r_2(\mu^3,\mu^2\eps,\mu\eps^2,\eps^3)$, $r(\eps^3)$, all depending on $\gamma$, such that  the following holds.  For all $\eps\in(-\eps_0,\eps_0),\mu\in(0,\mu_0)$, the spectrum of the operator $\L_{\sqrt\mu,\eps}$ in $U$  consists of  three eigenvalues $\lambda_*,\lambda_1,\lambda_2$, where 
   \bq
  \ld_*= \frac{i c_0^3}{c_0^2-1}\mu-\frac{ic_0^3}{(c_0^2-1)^4}\mu^2+i\mu r_0(\mu^2,\mu\eps,\eps^2)\in i\Rr,
   \eq
   $c_0>0$ being the unperturbed wave speed given by \eqref{eq:relation_c0_gamma}. Denote 
 \[
   \Delta=-\frac{4 c_0^6}{\left(c_0^2+1\right)^6}\mu^2+\frac{-c_0^6+3 c_0^4-c_0^2+3}{\left(c_0^2+1\right)^3}\eps^2+r_2(\mu^3,\mu^2\eps,\mu\eps^2,\eps^3). 
   \]
(i) (Modulational stability) If  $\gamma>\frac{2}{\sqrt{3}}$, then $\Delta<0$ and 
    \begin{equation}
        \lambda_{1,2}=\mez \left(\frac{2 i c_0^3}{c_0^2+1}\mu-i\mu r_1(\mu^2,\mu\eps,\eps^2)\pm i\mu\sqrt{-\Delta} \right)\in i\Rr. 
    \end{equation}   
(ii) (Modulational instability) If  $\gamma<\frac{2}{\sqrt{3}}$, then there exists a piecewise analytic function $\critmu:(-\eps_0,\eps_0)\to [0,\mu_0)$ depending on $\gamma$, of the form
    $$
    \critmu(\eps)=
    \begin{cases}
        \sqrt{\frac{(c_0^2+1)^3(-c_0^6+3 c_0^4-c_0^2+3)}{4c_0^6}}\eps+r_+(\eps^2) &\quad\text{if~} \eps\ge 0,\\
        -\sqrt{\frac{(c_0^2+1)^3(-c_0^6+3 c_0^4-c_0^2+3)}{4c_0^6}}\eps+r_-(\eps^2) & \quad\text{if~} \eps< 0,
    \end{cases}
    $$ 
    such that for any $\eps\in(-\eps_0,\eps_0)$, we have 
    \bq
    \Delta>0~\text{if}~0<\mu<\critmu(\eps),\quad\Delta<0~\text{if}~\critmu(\eps)<\mu<\mu_0,
    \eq
    and
    \begin{equation}
        \lambda_{1,2}=
        \begin{cases}
\mez \left(\frac{2 i c_0^3}{c_0^2+1}\mu-i\mu r_1(\mu^2,\mu\eps,\eps^2)\pm \mu\sqrt{\Delta} \right),& 0<\mu<\critmu(\eps),\\
\mez\left(\frac{2 i c_0^3}{c_0^2+1}\mu-ir(\eps^3) \right),&\mu=\critmu(\eps),\\
        \mez \left(\frac{2 i c_0^3}{c_0^2+1}\mu-i\mu r_1(\mu^2,\mu\eps,\eps^2)\pm i\mu\sqrt{-\Delta} \right),&\critmu(\eps)<\mu<\mu_0.
    \end{cases}
    \end{equation}
\end{theo}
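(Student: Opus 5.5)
The plan is to follow the Kato-similarity / Berti--Maspero--Ventura strategy, adapted to constant vorticity.

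\textbf{Step 1: the unperturbed operator.} First I would check that at $(\mu,\eps)=(0,0)$ the operator $\L_{0,0}$ has $0$ as an isolated eigenvalue of algebraic multiplicity four, and that the rest of its spectrum stays a positive distance away. The point is that vorticity adds the nonlocal term $\frac{\gamma}{2}\pinv$; after the Wahl\'en-type change of variables $\WahlenMatPi$, the operator on each Fourier mode $j$ reduces to an explicit $2\times 2$ block. The dispersion relation $c_0^2-\gamma c_0=1$ (that is, \eqref{eq:relation_c0_gamma}) makes the modes $j=\pm1$ resonant with $0$. The zero mode contributes two generalized kernel directions, coming from the mean level and the mean potential.

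\textbf{Step 2: reduction to a $4\times4$ matrix.} Next I would apply Kato's perturbation theory on a small circle around $0$. This gives a spectral projector $P_{\mu,\eps}$ of rank four and a transformation operator $U_{\mu,\eps}$ that is analytic in $(\sqrt\mu,\eps)$; analyticity in $\sqrt\mu$ is the reason for the notation $\L_{\sqrt\mu,\eps}$. Hamiltonian and reversibility structure are preserved by $U_{\mu,\eps}$. Consequently the restricted operator is represented by $J\mathtt{B}_{\mu,\eps}$ with $\mathtt{B}$ selfadjoint and reversibility-compatible. This restricts the form of the entries: each is real or purely imaginary with a definite parity in $\mu$.

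I would then expand $\mathtt{B}$ in a symplectic basis built from the kernel vectors of $\L_{0,0}$. The expansion must reach order $\mu^2$, $\mu\eps^2$ and $\eps^2$ in the ``Benjamin--Feir block'' (modes $\pm1$). It must reach order $\mu$ in the zero-mode block, and it must include the coupling entries of order $\eps$ and $\mu\eps$. These coefficients require the Stokes wave expansion to second order in $\eps$, including the vorticity-dependent mean-flow and Bernoulli corrections.

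\textbf{Step 3: the characteristic polynomial.} Rather than block-decoupling, one could factor directly. One eigenvalue, $\ld_*$, comes from the zero-mode block; it is simple and lies on $i\R$ by reversibility plus simplicity, and I would expand it by the implicit function theorem. I would then divide it out, leaving a quadratic in $\lambda/\mu$. Equivalently, I would perform one more symplectic block-decoupling step that removes the $O(\eps)$ coupling at the cost of quadratic errors.

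The remaining quadratic has the form $(\lambda/\mu)^2-iT(\lambda/\mu)-D=0$, with trace $T=\frac{2c_0^3}{c_0^2+1}+r_1$. Its discriminant is $\Delta$. The $\mu^2$ coefficient of $\Delta$ is the product of the dispersion-relation curvature terms, and the $\eps^2$ coefficient is $-$(dispersion)$\times$(nonlinear frequency correction including the mean-flow term). Reversibility forces $\Delta$ to be real, which yields the $\pm\mu\sqrt{\Delta}$ versus $\pm i\mu\sqrt{-\Delta}$ dichotomy.

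\textbf{Step 4: sign analysis and $\critmu$.} The sign of $-c_0^6+3c_0^4-c_0^2+3=(3-c_0^2)(c_0^4+1)$ changes at $c_0^2=3$. Since $c_0=\frac{\gamma+\sqrt{\gamma^2+4}}{2}$, the condition $c_0=\sqrt3$ is equivalent to $\gamma=2/\sqrt3$. This gives case (i) at once, because there both leading terms of $\Delta$ are negative.

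For case (ii), I would write $\Delta=\mu^2\cdot(\text{negative})+\eps^2\cdot(\text{positive})+r_2$ and set $\mu=|\eps|\,s$. Dividing by $\eps^2$, the implicit function theorem applied separately for $\eps\ge0$ and $\eps<0$ gives the root $s_\pm(\eps)$ analytic in $\eps$. This produces $\critmu$ and the sign pattern of $\Delta$ on either side of it. At $\mu=\critmu$ the square root vanishes, leaving the $r(\eps^3)$ form.

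\textbf{Main obstacle.} I expect Step 2 to be hardest: computing the $\eps^2$ entries exactly, since the nonlocal vorticity term couples to the zero mode and induces the mean-flow contribution. Obtaining the precise polynomial $(3-c_0^2)(c_0^4+1)$ depends on tracking these second-order terms in $P_{\mu,\eps}$ carefully. I would organize the computation by expanding $P_{\mu,\eps}$ through residues and verify the result symbolically.
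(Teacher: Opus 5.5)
\textbf{There is a genuine gap at Steps 1--2, and it carries through the rest of the plan: for $\gamma\neq0$ the relevant spectral subspace is three-dimensional, not four.}

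The four-fold zero you describe belongs to the $2\pi$-periodic operator $\Ls_0$. The Bloch family, however, is singular at $\mu=0$. In the original variables, the Bloch conjugate of the vorticity term $\gamma\p_x^{-1}G(\eta)$ acts on constants by $-i\gamma\sign(\mu)$, which jumps at $\mu=0$. After Wahl\'en's change of variables the operator contains $\frac{\gamma^2}{4}(i\mu+\p_x)^{-1}(-i\sign(D+\mu))$, which equals $-\frac{\gamma^2}{4\mu}$ on constants.

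On the zero Fourier mode at $\eps=0$ one finds $\ld=ic_0\mu-\frac{i\gamma}{2}\pm i\sqrt{\mu+\gamma^2/4}$. So as $\mu\to0^+$ only one zero-mode eigenvalue tends to $0$; it is $\ld_*$, with slope $c_0+\frac{1}{\gamma}=\frac{c_0^3}{c_0^2-1}$. The other tends to $-i\gamma$.

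Consequences:
- The spectral projector near $0$ has rank $3$ for every small $\mu>0$, as the statement itself says.
- No rank-four projector analytic in $(\sqrt\mu,\eps)$ can exist.
- Your Step 3 (removing $\ld_*$ leaves a quadratic) is only consistent with a $3\times3$ matrix anyway.

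What is missing is a construction of a family that is analytic up to $\mu=0$ with the correct triple eigenvalue. The paper does this in three moves. It conjugates by $A=\mathrm{diag}(|D+\mu|^{1/2},|D+\mu|^{-1/2})$ to remove the $1/\mu$ singularity. It replaces $\sign$ by $\signp$ to get right-continuity at $\mu=0$. It writes $|D+\mu|^{1/2}=S_\nu=\nu\Pi_0+|D+\nu^2|^{1/2}\Pi_0^\perp$ with $\nu=\sqrt\mu$. This construction, not Kato theory in general, is where $\sqrt\mu$ enters.

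\textbf{A finite-order expansion of the reduced matrix cannot give the statement, however carefully the $\eps^2$ terms are computed.}

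At $\nu=0$ the matrix is nonzero at order $\eps^2$ and nilpotent. An $O(\eps^3)$ error in one entry of its $2\times2$ block can create eigenvalues of size $\eps^{5/2}$ at $\mu=0$, possibly real. After the scaling $\ld=\mu f$ it produces terms like $\eps^5/\mu^2$. Then neither the analyticity of the rescaled polynomial nor the remainder $r_2(\mu^3,\mu^2\eps,\mu\eps^2,\eps^3)$ in $\Delta$ would follow, and both are needed down to $\mu\to0^+$.

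You need identities that hold exactly, to all orders in $\eps$:
- (a) $\Lmat_{0,\eps}^2=0$. The paper gets this from $0$ being an eigenvalue of $\L^+_{0,\eps}$ with algebraic multiplicity $3$ and geometric multiplicity $2$. It transports the generalized kernel $R_1,\dots,R_4$ of $\Ls_\eps$ through the map $B_0$, which kills $R_1$, and checks that the Jordan coefficient $-\p_ac-\gamma\Pi_0\p_a\eta$ is nonzero.
- (b) The parity of the entries in $\nu$: the $(1,3),(2,3),(3,1),(3,2)$ entries are odd and the others even. This is what makes the characteristic polynomial depend on $\nu$ only through $\mu$.
  - It comes from the $\Pi_0$/$\Pi_0^\perp$ block structure of $S_\nu$, not from the Hamiltonian and reversible structure. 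Those give only purely imaginary entries and $(\Lmat)_{jk}\beta_k=(\Lmat)_{kj}\beta_j$.
  - Also, $\signp$ destroys any $\mu\mapsto-\mu$ symmetry, so there is no parity in $\mu$ to exploit.
- (c) The relations $(\Lmat\bracketNu{1})_{13}+(\Lmat\bracketNu{1})_{23}=0$ and $(\Lmat\bracketNu{2})_{11}=(\Lmat\bracketNu{2})_{22}+2(\Lmat\bracketNu{2})_{12}$. These follow from evenness of $p$ and $\frac{1+q}{\zeta'}$ and the parity of $V_+^{0,\eps}=\cK_{0,\eps}(U_1+U_2)$.

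With these in place, your Step 4 is fine: the factor $(3-c_0^2)(c_0^4+1)$ and the construction of $\critmu$ by the implicit function theorem go through.
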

Theorem \ref{thm:main} provides the first rigorous full-Euler demonstration, in infinite depth, that constant-vorticity shear can suppress the Benjamin--Feir instability of small-amplitude Stokes waves. Although the constant-vorticity problem contains the irrotational case \(\gamma=0\), Theorem \ref{thm:main} is not simply a parameter-dependent extension of the irrotational instability result. The vorticity parameter changes the qualitative spectral picture.

 In the unstable regime, namely $\gamma<\frac{2}{\sqrt{3}}$, small-amplitude Stokes waves are subject to Benjamin--Feir instability. In the irrotational case, this instability  was proven in \cite{Huy21} and the full Benjamin--Feir spectrum, including  the figure-$8$ curve, was justified in \cite{Berti_Full}.  It follows from part (ii) of Theorem \ref{thm:main} and \eqref{form:figure8} that for any $\gamma<\frac{2}{\sqrt{3}}$, the Benjamin--Feir spectrum forms a figure-$8$ curve approximated by the formula \eqref{form:figure8}.  Below are plots of approximate  figure-$8$ curves corresponding to various values of $\gamma<\frac{2}{\sqrt{3}}\approx 1.155$, in the rescaled coordinates $(\frac{\RE \ld}{\eps^2}, \frac{\IM \ld}{\eps})$:
\begin{figure}[H]
    \centering
    \includegraphics[width=0.75\linewidth, height=8cm]{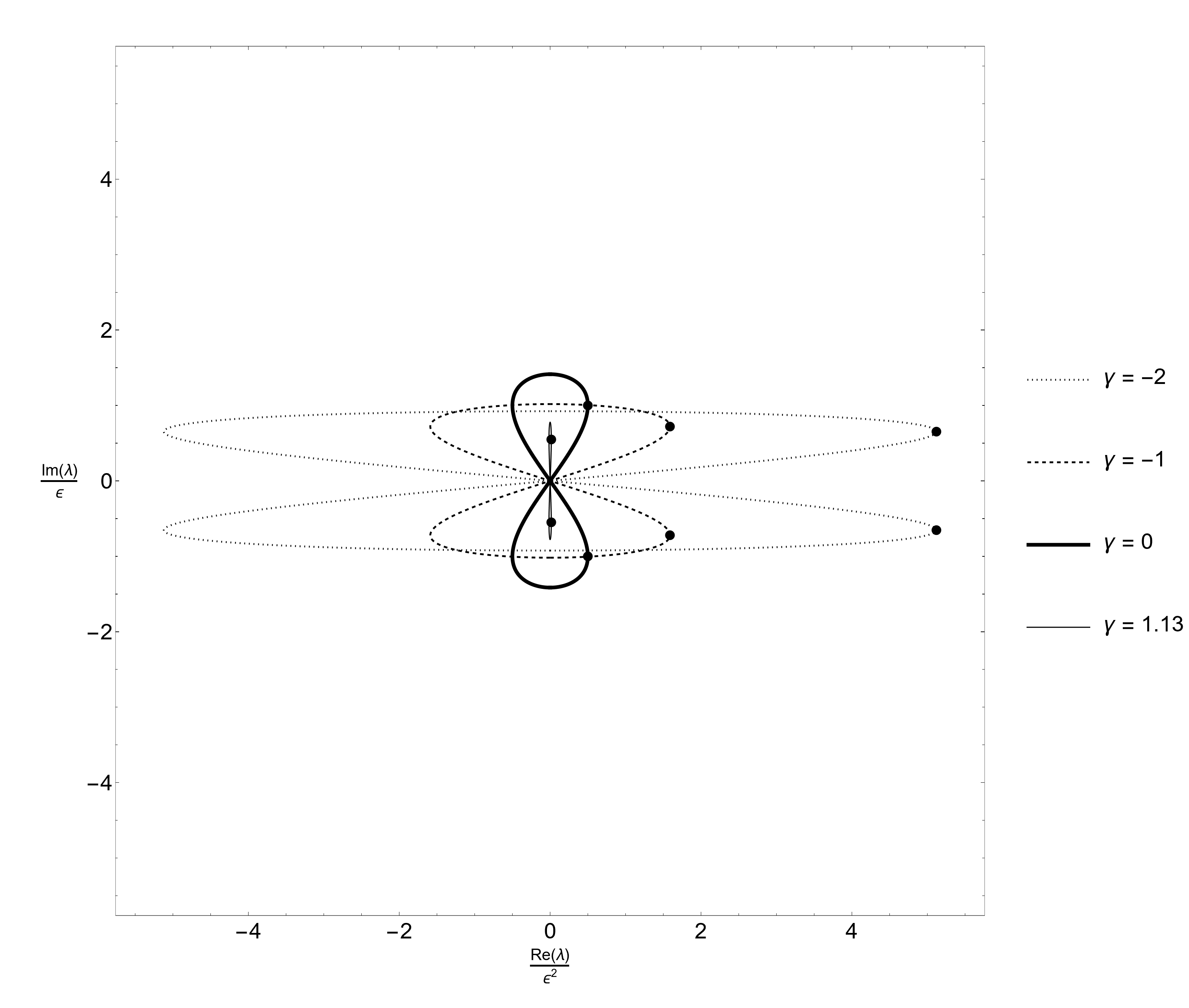}
    \caption{Approximate figure  ``8'' for $\gamma=-2, -1, 0, 1.13$}
\end{figure}
Several remarks about Theorem \ref{thm:main} are in order:
\begin{rema}
Irrotational Stokes waves are also susceptible to high-frequency instabilities arising from collisions of nonzero imaginary eigenvalues, in contrast to the Benjamin--Feir instability, which develops near the origin in $\Cc$.  See \cite{DecOli, Creedon_high_freq, Hur, BertiMasperoVentura2025, Berti_isola}. Theorem \ref{thm:main} describes the spectrum only in a neighborhood of the origin in $\Cc$ and therefore does not rule out high-frequency instabilities, even when $\gamma>\frac{2}{\sqrt{3}}$.
\end{rema}
\begin{rema}
For the proof of Theorem \ref{thm:main}, we expand the matrix representing the linearized operator in a suitable finite-dimensional subspace up to the orders $\eps^2$ and $\nu^4$, $\nu=\sqrt{\mu}$. See Proposition \ref{prop:finalmatrix}. We expect that for critical vorticity $\gamma=\frac{2}{\sqrt{3}}$, higher-order expansions would reveal the stability/instability. This is reminiscent of the Benjamin--Feir instability at critical depth  \cite{critical_depth_unstable}.
\end{rema}
\begin{rema}
We show in \eqref{maxReld} that for each fixed $\gamma\in (-\infty, \frac{2}{\sqrt{3}})$, upon taking $\eps_0=\eps_0(\gamma)$ and $\mu_0=\mu_0(\gamma)$ smaller  if necessary, the maximum growth rate is 
\bq
   \max_{\mu\in (0, \mu_0)}\RE \ld=\frac{(-c_0^6+3 c_0^4-c_0^2+3)\eps^2}{8c_0^3}+O(\eps^3).
\eq
\end{rema}
\begin{rema}
The approach in the present paper will be adapted in a follow-up work to investigate the effect of constant vorticity on the Benjamin--Feir instability of Stokes waves in finite depth.  
\end{rema}
\subsection{Ideas of proof}
Our starting point is the generalization \eqref{eq:main_with_t} of the Zakharov-Craig-Sulem (ZCS) formulation for water waves with constant vorticity. This formulation is a coupled system of the free boundary $\eta(x, t)$ and the trace $\psi(x, t)$ of the potential part of the fluid velocity. Vorticity induces the transport terms $\gamma \eta\eta_x$, $\gamma \eta\psi_x$ and the nonlocal term $\gamma \p_x^{-1}G(\eta)\psi$, where $G(\eta)$ denotes the Dirichlet-Neumann operator for the fluid domain. The existence of  small-amplitude periodic traveling wave solutions (Stokes waves) has been proven in \cite{2025}. Our analysis requires expansion of these Stokes waves up to second order of their small amplitude $\eps$, which is carried out in Appendix \ref{appendix}. Following \cite{Huy21}, we consider $L=2\pi n$ periodic perturbations, linearize the ZCS system around a fixed Stoke wave $(\eta^*, \psi^*)$, and use Alinhac's good unknown to obtain the linear operator $\Ls_\eps$ \eqref{eq:v_matrix}. Moreover, we prove that for sufficiently small $\eps$, $0$ is the only eigenvalue of $\Ls_\eps$ in a neighborhood of $0\in \Cc$ and it has algebraic multiplicity $4$ with explicit formulas for the associated (generalized) eigenvectors. However, unlike the irrotational setting \cite{Huy21}, $\Ls_\eps$ is not in the canonical Hamiltonian form due to additional vorticity-induced terms, including the nonlocal term $\gamma \p_x^{-1}G(\eta^*)$. In \cite{Wahlen_Hamiltonian}, Wahl\'en observed that there is an explicit linear change of variables that transforms the ZCS system with constant vorticity to a canonical Hamiltonian system. Therefore, we return to the  ZCS system, use  Wahl\'en's change of variables followed by the good unknown transformation and the conformal mapping transformation to flatten $\{y=\eta^*\}$ and arrive at  the linear Hamiltonian system \eqref{sys:W} for $(w_1, w_2)$. For long-wave perturbations, we consider the ansatz $w_j=e^{\ld t}e^{i\mu x}q_j(x)$, where the $q_j$'s are $2\pi$-periodic and $\mu=\frac{m_0}{n_0}$ is a small rational number, so that the $w_j$'s have period $L=2\pi n_0$. After all the above transformations, the nonlocal term $\gamma \p_x^{-1}G(\eta)\psi$ produces the singular term $(i\mu +\p_x)^{-1}\sign(D+\mu)$, which is well-defined for all real $\mu\in (-\mez, \mez)\setminus\{0\}$ but is unbounded as $\mu\to 0$. This term is a manifestation of the singular nature of modulational instability in infinite depth with constant vorticity. Indeed, one does not confront such a singular term when either $\gamma=0$ or the depth is finite. We desingularize this  term by the singular transformation $A$ given  by \eqref{def:A}. The resulting linear operator is then bounded as $\mu\to 0$ but discontinuous at $\mu=0$ due to the term $\sign(D+\mu)$.  This obstructs any continuation arguments from $\mu=0$. Noticing however that our goal is to obtain the spectrum for $\mu>0$, we modify $\sign$ to $\sign^+$ (see \eqref{def:signplus}) to obtain a right-continuous function at $0$. The resulting $2\times 2$ operator is analytic in $\eps$ and continuous in $\mu\in [0, \mez)$ but fails to be analytic in $\mu\in [0, \mez)$ due to the presence of $|D+\mu|^\mez$. In order to employ analytic perturbation methods, we change $\nu=\sqrt{\mu}$ and arrive at the final operator $\L^+_{\nu, \eps}$ \eqref{def:Lplus} which is analytic in $\eps$ and $\nu\in [0, \frac{1}{\sqrt{2}})$, and thus can be further extended analytically to $\nu\in (-\frac{1}{\sqrt{2}}, \frac{1}{\sqrt{2}}
)$ as in \eqref{def:Lplus_new}. The following diagram provides a roadmap for all the transformations that we perform:

% https://q.uiver.app/#q=WzAsNyxbMywwLCJ2X2oiXSxbMywyLCJsX2oiXSxbMCwwLCIoXFxvbFxcZXRhLFxcb2xcXHBzaSkiXSxbMCwyLCIoXFxvbFxcZXRhLFxcb2xcXHhpKSJdLFs0LDIsIldfaiJdLFs1LDIsInFfaiJdLFs2LDIsInVfaiJdLFswLDEsIlxcV2FobGVuTWF0IiwyXSxbMiwwLCJcXHRleHR7Z29vZCB1bmtub3dufSJdLFsyLDMsIlxcV2FobGVuTWF0UGkiLDJdLFszLDEsIlxcdGV4dHtnb29kIHVua25vd259IiwyXSxbMSw0LCIoXFx6ZXRhXyosXFx6ZXRhX1xcc2hhcnApIiwyXSxbNCw1LCJlXntcXGxhbWJkYSB0fSBlXntpXFxtdSB4fSIsMl0sWzUsNiwiQV57LTF9IiwyXV0=
\[\begin{tikzcd}
	{(\ol\eta,\ol\psi)} &&& {v_j} &&& \\
	\\
	{(\ol\eta,\ol\xi)} &&& {l_j} & {W_j} & {q_j} & {u_j}
	\arrow["{\text{good unknown}}", from=1-1, to=1-4]
	\arrow["\WahlenMatPi"', from=1-1, to=3-1]
	\arrow["\WahlenMat"', from=1-4, to=3-4]
	\arrow["{\text{good unknown}}"', from=3-1, to=3-4]
	\arrow["{(\zeta_*,\zeta_\sharp)}"', from=3-4, to=3-5]
	\arrow["{e^{\lambda t} e^{i\mu x}}"', from=3-5, to=3-6]
	\arrow["{A^{-1}}"', from=3-6, to=3-7]
\end{tikzcd}\]
Having the operator $\L_{\nu, \eps}^+$ \eqref{def:Lplus_new} analytic in $(\eps, \nu)$ near $(0, 0)$, we analyze its spectrum near $0\in \Cc$. Due to nonzero vorticity and the modification of $\sign$ to $\sign^+$, $0$ is an eigenvalue of $\L_{0, 0}$ but only with algebraic multiplicity $3$. Then, using the relations between our transformations and the formulas for the four (generalized) eigenvectors of $\Ls_\eps$, we show that for small $\eps$, $0$ is still an eigenvalue of $\L^+_{0, \eps}$, with algebraic multiplicity $3$ and geometric multiplicity $2$. Next, we perturb $\eps$ and $\nu$ from $0$ by means of Kato's similarity transformation, a method introduced to the Stokes wave instability problem by Berti, Maspero, and Ventura \cite{Berti_Full}. This transformation provides a formula for the three-dimensional basis $\mathcal{V}_{\nu, \eps}$ of the perturbed eigenspace in terms of the unperturbed eigenspace and the spectral projection of $\L^+_{\nu, \eps}$. Consequently, eigenvalues near $0\in \Cc$  of $\L_{\nu, \eps}^+$ are precisely eigenvalues of the $3\times 3$ matrix $\Lmat_{\nu,\eps}$ representing  $\L_{\nu, \eps}^+$ on the $\mathcal{V}_{\nu, \eps}$ basis. Combining the Hamiltonian structure and the reversibility of $\L_{\nu, \eps}^+$ yields symmetries of entries of $\Lmat_{\nu,\eps}$. 

With the help of Mathematica's symbolic computation, we expand $\Lmat_{\nu,\eps}$ in powers of $\nu$  and $\eps$ up to $O(\eps^3, \nu^2\eps^2, \nu^4\eps, \nu^6)$ error terms. We note in particular that the $\nu\eps$ and $\nu^3\eps$ terms do not vanish, a manifestation of the fact that $\Lmat_{\nu,\eps}$ is not smooth in $\mu$ near $0$. The above finite-order expansions are insufficient for the proof of Theorem \ref{thm:main}. We first prove that the $(1,1)$, $(1,2)$, $(2,1)$, $(2,2)$, and $(3,3)$ entries of $\Lmat_{\nu,\eps}$ are analytic in $\mu=\nu^2$. Using this, the fact that $0$ is an eigenvalue of $\L^+_{0, \eps}$ with algebraic multiplicity $3$ and geometric multiplicity $2$, and leveraging parity of variable coefficients of $\L_{\nu, \eps}^+$ and eigenvectors, we obtain a number of {\it exact} relations and vanishing of certain entries of the $\nu^j$ coefficients of $\Lmat_{\nu,\eps}$, for $j\in \{0, 1, 2\}$. By ``exact'' we mean that these relations and vanishing are valid at all orders $\eps^k$, $k\ge 0$. This detailed analysis leads us to the final form \eqref{eq:Lmat_final} of $\Lmat_{\nu,\eps}$. Finally, we devise a direct method to analyze the spectrum of the $3\times 3$ matrix \eqref{eq:Lmat_final}. The aforementioned analyticity in $\mu$ of certain entries implies that the coefficients of the characteristic polynomial are analytic in $(\mu, \eps)$.  For $\mu=\eps=0$, the characteristic polynomial has a purely imaginary simple root and a purely imaginary double root. We use the analytic implicit function theorem to continue the simple root to a purely imaginary simple root as $(\mu, \eps)$ is perturbed, and derive a second-order approximation for this root. The other two roots are then roots of a quadratic polynomial and their real parts are determined by the sign of the associated discriminant. When $\gamma>\frac{2}{\sqrt{3}}$, the discriminant is negative for all small $\mu$ and $\eps$, leading to modulational stability. On the other hand, when $\gamma<\frac{2}{\sqrt{3}}$, there is a threshold $\critmu(\eps)>0$ such that the discriminant is positive when $\mu< \critmu(\eps)$, leading to Benjamin--Feir instability and the figure-$8$ curve. 
\subsection{Notation}\label{sec:notation}
  \leavevmode
\begin{itemize}
    \item $\T_L:=\R/(L\Z)$, $\T\equiv \T_{2\pi}$.
    \item The Fourier transform for an $L$-periodic function is defined by
    \begin{equation}
        \wh{f}^L(k):=\int_{\T_L} f(x)e^{-i\frac{2\pi}{L}kx}dx,\quad \forall k\in\Z,\quad \wh{f}(k) \equiv \wh{f}^{2\pi}(k),    \end{equation}
    so that
    \begin{equation}
        f(x)=\frac{1}{L}\sum_{k\in\Z}\wh f^L(k)e^{i\frac{2\pi}{L}kx},\quad f\in L^2(\T_L).
    \end{equation}
    Fourier multipliers are then defined by
\begin{equation}\label{def:FM}
    m\left(D_L\right) f(x)=\frac{1}{L} \sum_{k \in \mathbb{Z}} e^{i \frac{2 \pi}{L} k x} m\left(k \frac{2 \pi}{L}\right) \hat{f}^L(k),\quad m(D)\equiv m(D_{2\pi}).
\end{equation}
    \item For $\sigma,s\ge 0$ and $f: \T\to \Cc$,  we define the norm
        \begin{align}
        \|f\|_{H^{\sigma,s}}^2&:=\sum_{k\in\Z}(1+|k|^2)^{s}e^{2\sigma|k|}|\wh f(k)|^2
    \end{align}
   and the corresponding function space
        \begin{align}
            H^{\sigma,s}(\T):=\{f\in L^2(\T):~\|f\|_{H^{\sigma,s}}<\infty\}
        \end{align}
      together with the subspaces 
        \begin{equation}
            H^{\sigma,s}_{\mathrm{ev}}(\T):=\{f\in H^{\sigma,s}(\T):f~\text{is~even}\},\quad H^{\sigma,s}_{\mathrm{odd}}(\T):=\{f\in H^{\sigma,s}(\T): f~\text{is~odd}\}.
        \end{equation}
    We set 
    \[
    H^s\equiv H^{0, s}.
    \]
    \item For $f,g \in \left(L^2(\T)\right)^2$, 
    $$(f, g):=\int_{\T} f(x) \cdot ~\overline{g}(x)dx,$$ 
    where the `bar' denotes complex conjugation  and the `dot' denotes the real dot product. 
   % \item $D:=\p_x/i$, and $f(D)e^{ikx}=f(k)e^{ikx}$. 
    \item For an $L$-periodic function $f$, we define the projections onto the zero mode and nonzero modes by
\[
\Pi_0f=\fint f=\frac{1}{L}\int_0^Lfdx,\quad \Pi_0^\perp f=f-\Pi_0f.
\]
If $\Pi_0f=0$, we denote the unique mean-free primitive of $f$ by $\pinv f$, which is $L$-periodic.  We note the identities  
\begin{equation}\label{dinverse}
\p_x\pinv=\mathrm{Id},\quad \pinv\p_x=\Pi_0^\perp.
\end{equation}
\item For independent variables $(X_1,\dots,X_n)\in\R^n$ and a Banach space $B$, we denote the ring of $B$-valued analytic functions near $0\in \R^n$ by $B\{X_1,\dots,X_n\}$. 
\begin{itemize}
    \item For a function $F(X_1,\dots,X_n)\in B\{X_1,\dots,X_n\}$, we denote by $[X_j^m]F$ the coefficient of $X_j^m$ in the expansion of $F$, that is, 
    \begin{equation}\label{nota:coefficient}
    [X_j^m]F:=m!\p_{X_j}^mF|_{X_j=0}.
   \end{equation}
\item If $F(\mu,\eps)$ is an analytic function in a neighborhood of $(0, 0)$, by writing $F=O(\mu^{m_1}\eps^{n_1},\dots,\mu^{m_p}\eps^{n_p})$ we mean $F(\mu,\eps)=\sum_{j=1}^p \mu^{m_j}\eps^{n_j}G_j(\mu,\eps)$ for some analytic functions $G_j(\mu,\eps)$ near $(0,0)$.
Note that $O(\mu^{m_1}\eps^{n_1},\dots,\mu^{m_p}\eps^{n_p})$ defined here is not  the same as the standard notation $F=O(|\mu^{m_1}\eps^{n_1}|+\dots+|\mu^{m_p}\eps^{n_p}|)$, where the latter means 
\[
|F|\le C\sum_{k=1}^p |\mu^{m_k}\eps^{n_k}|.
\]
  For example,  $\mu \eps =O(\mu^2+\eps^2)$  but $\mu\eps \ne O(\mu^2,\eps^2)$.
\item By $r(\mu^{m_1}\eps^{n_1},\dots,\mu^{m_p}\eps^{n_p})\in\anaR{\mu,\eps}$ we mean any real analytic function satisfying 
   \begin{equation}
   \label{def:analytic_r}
       r(\mu^{m_1}\eps^{n_1},\dots,\mu^{m_p}\eps^{n_p})=O(\mu^{m_1}\eps^{n_1},\dots,\mu^{m_p}\eps^{n_p}).
   \end{equation}
\end{itemize}
   \item We denote by $even_0$ a $2\pi$-periodic even function with mean zero on $\T$, and $odd$ a $2\pi$-periodic odd function. They may denote different functions in different contexts.
\end{itemize}
\subsection{Water waves with constant vorticity}
We consider two-dimensional inviscid, incompressible fluids in the moving domain  $\Omega(t)=\{(x, y): x \in \mathbb{R}, y<\eta(x, t)\} $, where $\{ y=\eta(x,t)\}$ is the free surface. Denoting the velocity field by $\velo:=\vet{u}{v}$, the water wave system reads
\begin{align}
\label{eq:Euler_orig}
    \velo _t+(\velo\cdot\na)\velo&=-\na p-\vet{0}{g},\quad \na \cdot \velo =0&& \text{in }\Om,\\
    p&=p_{atm}&&\text{on }\{y=\eta(x)\},\\
    \label{eq:kinematic}
    \eta_t&=-u\eta_x+v&&\text{on }\{y=\eta(x)\},\\
    v&\to 0&&\text{when }\{y\to -\infty\},
\end{align}
where $g>0$ is the acceleration due to gravity, and $p_{atm}$ is the constant atmospheric pressure above the free surface. We assume that the vorticity is constant,
\begin{equation}
v_x-u_y=\gamma,
\end{equation} 
a property preserved by the flow. Then we can express the velocity field as the sum of a shear flow and a potential flow, that is, 
\begin{equation}
\velo=
        -\gamma y \mathbf e_x+\na\Phi,
    \end{equation}
     where $\mathbf e_x$ is the unit vector in the $x$-direction and $\Phi$ is a harmonic in $\Omega$. We denote
     \begin{equation}
     \psi(x)=\Phi(x, \eta(x)).
     \end{equation}
In the moving frame $(x, y,  t)\to (x- ct, y, t)$, the water wave system with constant vorticity $\gamma$ can be recast in terms of $\eta$ and $\psi$:
\begin{equation}
\label{eq:main_with_t}
\begin{cases}
\eta_t=F_1(\eta,\psi,c):= c\partial_x\eta +G(\eta) \psi+\gamma \eta \eta_x, \\
\psi_t=F_2(\eta,\psi,c,P):= c\partial_x\psi -g \eta-\frac{{\psi_x^2}}{2}+\frac{\big(\eta_x \psi_x+G(\eta) \psi\big)^2}{2\left(1+\eta_x^2\right)}+\gamma \eta \psi_x+\gamma \partial_x^{-1} G(\eta) \psi+P,
\end{cases}
\end{equation}
where $P\in \Rr$ denotes the Bernoulli constant.  Here $G(\eta)$ denotes the nonlocal Dirichlet-Neumann operator defined by
\begin{align}
   [ G(\eta)\psi](x)&:=\sqrt{1+\eta_x^2}\p_n\Theta|_{y=\eta(x)}
   =\Theta_y(x,\eta(x))-\eta_x(x)\Theta_x(x,\eta(x)),
\end{align}
where $n$ denotes the exterior unit normal to $\{y=\eta(x)\}$ and $\Theta$ solves the elliptic problem
\[
\begin{cases}
\Delta \Theta=0\quad\text{in~}\Omega,\\
\Theta\vert_{y=\eta(x)}=\psi(x),\quad \na \Theta(x, y)\in L^2(\Omega).
\end{cases}
\]
\eqref{eq:main_with_t} is a generalization of the well-known Zakharov-Craig-Sulem formulation \cite{Zakharov, Craig_Sulem} for irrotational flows. We refer to \cite{ConstantinIvanovProdanov2008, Wahlen_Hamiltonian} for the derivation of \eqref{eq:main_with_t}. 

By a Stokes wave we mean any steady periodic solution  $(\eta(x), \psi(x)): \T\to \Rr^2$ of \eqref{eq:main_with_t}, that is, $(\eta(x), \psi(x))$ satisfies 
\bq
\label{eq:main}
\begin{cases}
0=c \eta_x+G(\eta) \psi+\gamma \eta \eta_x, \\
0=c \psi_x-g \eta-\frac{{\psi_x^2}}{2}+\frac{\big(\eta_x \psi_x+G(\eta) \psi\big)^2}{2\left(1+\eta_x^2\right)}+\gamma \eta \psi_x+\gamma \partial_x^{-1} G(\eta) \psi+P.
\end{cases}
\eq
By the change of variables
\begin{equation}
\label{eq:nondim}
    (\eta,\psi,c,P,t,\gamma)\to(\eta,\sqrt{g}\psi,\sqrt{g}c,gP,\frac{t}{\sqrt{g}},\sqrt{g}\gamma),
\end{equation}
we assume (without loss of generality) hereafter that  $g=1$.
\subsection{Small Stokes waves with constant vorticity}
The existence of small irrotational Stokes waves is a classical result \cite{Nekrasov, Levi_Civita,Struik1926,Analyticity_of_DN_op}. In the presence of constant vorticity, such a result can be found in \cite[Theorem 1.1]{2025}. For the sake of completeness and to include the second-order expansion, we  will prove Theorem \ref{thm:Stokes_without_g}  in Appendix \ref{appendix:local_bif}. This theorem covers both the finite and infinite depth cases, where  $h\in(0,\infty]$ denotes the depth. 
\begin{theo}
\label{thm:Stokes_without_g}
Let  $\sigma>0$ and $s\ge\frac72$ such that $s+\mez \in\Z$. For any constant vorticity $\gamma$ and any depth $h\in (0, \infty]$, there exist $\eps_{St}=\eps_{St}(\gamma,h)>0$ and a unique family of solutions $\big(\eta(\cdot; a,h), \psi(\cdot; a,h), c(a,h)\big)\in H^s(\T)\times H^s(\T)\times \Rr$ to the traveling wave system \eqref{eq:main}, parametrized by $a\in (-\eps_{St}, \eps_{St})$ such that 

(i) the mapping $a\mapsto \big(\eta(\cdot; a,h), \psi(\cdot; a,h), c(a,h)\big)\in H^{\sigma, s}(\T)\times H^{\sigma, s}(\T)\times \Rr$ is analytic, and

(ii) $\eta(\cdot; a,h)$ is even and $\psi(\cdot; a,h)$ is odd. 

Moreover, for $P=0$ the following expansions hold: 
\begin{equation}
    \label{expand:Stokes_without_g_all_depth}
        \begin{aligned}
c=c(a,h)&=c_0+a^2c_2+O(a^3),\\ 
    \eta(x)=\eta(x;a,h)&= 
        a \Big[\th \cos(x)\Big]+\frac{ a ^2}4\Big[(c_0^2-2\th+c_0^2\th ^2)\\
        &+\Big(\frac{1}{\th}+\frac{c_0^2+\th(1-2c_0^4)+c_0^2\th ^2}{c_0^4}\Big)\cos(2x)\Big]+O(a^3),
    \\
    \psi(x)=\psi(x;a,h)&= a \Big[ c_0\sin(x)\Big]+ a ^2\Big[\frac{c_0^4+c_0^2\th+(1-2c_0^4)\th^2+c_0^2\th^3+(1+c_0^4)\th^4}{8c_0^3\th^2}\sin(2x)\Big]+O(a^3),
\end{aligned}
\end{equation}
and 
\begin{equation}
    \begin{aligned}
        c_2=c_2(h)&=\frac{1}{8 c_0^5 \th^2 \left(c_0^2+\th\right)}\Bigg(c_0^8+2 c_0^6 \th+\left(11 c_0^8-7 c_0^4+1\right) \th^4+\left(3 c_0^4-5 c_0^8\right) \th^2\\
        &+\left(-2 c_0^{10}+4 c_0^6+2 c_0^2\right) \th^5+\left(-2 c_0^{10}-2 c_0^6+2 c_0^2\right) \th^3\Bigg),
    \end{aligned}
\end{equation}
 where $\th:=\tanh(h)$ and $c_0$ is the positive solution of 
\begin{equation}
    \label{eq:relation_c0_gamma_general}
\frac{c_0}{\th}-\frac{1}{c_0}=\gamma.
\end{equation}
For $h=\infty$ and arbitrary $P$, the following expansions hold:
\begin{equation}
    \label{expand:Stokes_without_g}
        \begin{aligned}
c=c(a,\infty)&=\Big(c_0-(c_0-\frac{1}{c_0})P\Big)+a^2\Big(\frac{-4 c_0^8 +11 c_0^6  -7 c_0^4 +3 c_0^2  +1}{8 c_0^5  }\Big)+O(a^3),\\ 
    \eta(x)=\eta(x;a,\infty)&=P+a \cos x+a^2\Big(\frac{c_0^2-1}{2}+\frac{1+2c_0^2-c_0^4}{4c_0^4}\cos(2 x)\Big)+O(a^3),\\
    \psi(x)=\psi(x;a,\infty)&=ac_0\sin x+a^2\Big(\frac{1+c_0^2}{4c_0^3}\sin(2 x)\Big)+O(a^3), 
\end{aligned}
\end{equation}
where $c_0$ is the positive solution of
\begin{equation}
    \label{eq:relation_c0_gamma}
c_0-\frac{1}{c_0}=\gamma.
\end{equation}
\end{theo}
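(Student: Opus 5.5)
The proof is a Crandall--Rabinowitz bifurcation from the trivial solution, followed by an explicit Stokes expansion to second order. Fix $h\in(0,\infty]$ and $\gamma$, and take $P=0$ for now. Write \eqref{eq:main} as $\mathcal F(\eta,\psi,c)=0$. The map $\mathcal F$ acts from $H^{\sigma,s}_{\mathrm{ev}}\times H^{\sigma,s}_{\mathrm{odd}}\times\Rr$ into a space of the same parities with one derivative less, for instance $H^{\sigma,s-1}_{\mathrm{odd}}\times H^{\sigma,s-1}_{\mathrm{ev}}$. The term $\gamma\p_x^{-1}G(\eta)\psi$ is well defined because $\int G(\eta)\psi=0$. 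Analyticity of $\mathcal F$ follows from the analyticity of $\eta\mapsto G(\eta)$ on $H^{\sigma,s}$ for small $\eta$ \cite{Analyticity_of_DN_op}. The remaining terms are products and Fourier multipliers, which are analytic on these algebras since $s>\frac12$ and $\sigma>0$.

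First I will check the parities. If $\eta$ is even and $\psi$ is odd, then $G(\eta)\psi$ is odd, while $\eta_x$, $\eta\eta_x$ and $\eta\psi_x$ have the expected parities. Hence the first component of $\mathcal F$ is odd and the second is even.

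Next I will compute the linearization at $(0,0,c)$:
\[
L_c(\eta,\psi)=\big(c\eta_x+G(0)\psi,\; c\psi_x-\eta+\gamma\p_x^{-1}G(0)\psi\big),
\]
where $G(0)=|D|\tanh(h|D|)$. On the mode $(\eta,\psi)=(A\cos kx,B\sin kx)$ this becomes a $2\times2$ matrix. Its determinant vanishes exactly when
\[
c^2k-\th_k\big(1+\gamma c\big)=0,\qquad \th_k=\tanh(hk),
\]
with the appropriate sign conventions. For $k=1$ this reduces to \eqref{eq:relation_c0_gamma_general}. I will choose $c_0$ as the positive root and verify that no other $k\ge1$ gives a zero mode at $c=c_0$. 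This uses the monotonicity of $k\mapsto \th_k/k$; the $k=0$ mode is removed by the parity and mean constraints (the even mean of $\eta$ is fixed via $P$).

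With this in hand, the kernel is spanned by $(\th\cos x,\,c_0\sin x)$. The range has codimension one, since $L_{c_0}$ is a Fourier multiplier that is elliptic of order one on the other modes. Transversality, $\p_cL_c\,\xi_0\notin\operatorname{Ran}L_{c_0}$, amounts to $\p_c\det\ne0$ at $c_0$, and this holds because $c_0>0$. The analytic Crandall--Rabinowitz theorem then yields the analytic family in (i), the parities in (ii), and local uniqueness. The restriction $s+\frac12\in\Z$ and $s\ge\frac72$ are only used to get clean mapping properties of $G$ in the cited analyticity result.

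The expansions come from substituting $\eta=a\eta_1+a^2\eta_2+a^3\eta_3$, $\psi=a\psi_1+a^2\psi_2+a^3\psi_3$ and $c=c_0+ac_1+a^2c_2$. I will use the expansion
\[
G(\eta)=G_0+G_1(\eta)+G_2(\eta)+\dots,\qquad G_1(\eta)\psi=-\p_x(\eta\p_x\psi)-G_0(\eta G_0\psi),
\]
and the analogous formula for $G_2$. At order $a^2$ I will solve $L_{c_0}(\eta_2,\psi_2)=-\text{(quadratic terms)}-c_1\p_c L\,\xi_0$. The projection onto the cokernel forces $c_1=0$, and the $\cos2x$, $\sin2x$ and mean modes can then be inverted explicitly. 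At order $a^3$, solvability on the $k=1$ mode determines $c_2$, and in the same step I will fix the normalization so that $\xi_1$ has no $\cos x$ component. For $h=\infty$, taking $\th=1$ and $G_0=|D|$ simplifies the formulas. The $P$-dependence is handled by the shift $\eta\mapsto\eta+P$. The equations are not translation invariant in $y$, because $\gamma\eta\eta_x$ and $\gamma\eta\psi_x$ change under the shift, and the change is absorbed into $c$ at the rate $-(c_0-1/c_0)P=-\gamma P$. I will check this directly at first order in $P$ in the linearized dispersion relation.

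I expect the main obstacle to be the third-order solvability computation for $c_2$. It requires $G_2$ and careful handling of the $\gamma\p_x^{-1}G$ cross terms at finite depth. I will do this with symbolic computation, mode by mode.
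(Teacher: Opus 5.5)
Your proposal is correct and follows the paper's proof essentially step by step: Crandall--Rabinowitz on $H^{\sigma,s}_{\mathrm{ev}}\times H^{\sigma,s}_{\mathrm{odd}}\to H^{\sigma,s-1}_{\mathrm{odd}}\times H^{\sigma,s-1}_{\mathrm{ev}}$, then an order-by-order expansion using the Craig--Sulem expansion of $G(\eta)$, with $c_1=0$ forced by the frequency-one solvability condition at order $a^2$ and $c_2$ read off from the same condition at order $a^3$. Your monotonicity of $\tanh(hk)/k$ and the criterion $\partial_c\det\neq 0$ are harmless substitutes for the paper's Rolle argument and its explicit pairing with the cokernel vector.

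Two small corrections are needed.

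The $k=0$ mode is non-degenerate because of the gravity term, since $L_{c_0}(\eta_0,0)=(0,-\eta_0)$ for constant $\eta_0$. It is not non-degenerate because the mean of $\eta$ is constrained. In fact, with $P=0$ that mean must be left free, since it equals $\frac{a^2}{4}(c_0^2-2\th+c_0^2\th^2)+O(a^3)$, which is generally nonzero.

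For $h=\infty$, the map $(\eta,\psi,c)\mapsto(\eta+P,\psi,c-\gamma P)$ sends solutions with Bernoulli constant $0$ \emph{exactly} to solutions with Bernoulli constant $P$. This holds because $G(\eta+P)=G(\eta)$ in infinite depth, and the $-P$ produced by $-\eta$ cancels the $+P$. So no first-order check is needed, and a first-order check alone would not give the stated formulas for arbitrary $P$.
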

In what follows, we  only consider the infinite depth case and assume without loss of generality that $P=0$. Indeed, when $h=\infty$, the general $P$ case can be reduced to $P=0$ by setting 
\begin{equation}
    \label{eq:symm_P}
    \wt{\eta}=\eta-P,\quad \wt{c}=c+\gamma P
\end{equation}
and noticing that $G\left(\widetilde{\eta}+P \right) \psi=G(\widetilde{\eta})\psi$. 
\section{Linearization and transformations}
We will study the stability of the Stokes waves constructed in Theorem \ref{thm:Stokes_without_g} with respect periodic  perturbations of period $L=n_02\pi$ for some $n_0\in\Nn$.
\subsection{Linearization}
 To linearize \eqref{eq:main_with_t} about Stokes waves, we follow \cite{Huy21}  and use the shape-derivative formula:
\begin{prop}[\cite{Lannes,Alazard}]\label{shape}
    For $L$-periodic functions, the derivative of the Dirichlet-Neumann operator $\eta\mapsto G(\eta)\psi$ is given by
\begin{equation}
\label{shapederi}
    \frac{\delta G(\eta) \psi}{\delta \eta}(\ol{\eta})=-G(\eta)(B \ol{\eta})-\partial_x(V \ol{\eta}),
\end{equation}
where
\begin{equation}
\label{def:B_and_V}
    B=B(\eta, \psi)=\frac{G(\eta) \psi+\partial_x \psi \partial_x \eta}{1+\left|\partial_x \eta\right|^2}, \quad V=V(\eta, \psi)=\partial_x \psi-B \partial_x \eta.
\end{equation}
Moreover, if $\eta$ is even and $\psi$ is odd, then $B$ is odd and $V$ is even.
\end{prop}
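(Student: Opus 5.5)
The statement to prove is Proposition \ref{shape}: the shape-derivative formula for the Dirichlet--Neumann operator in the $L$-periodic, infinite-depth setting, together with the parity claim. The formula is classical (Lannes, Alazard), so the plan is to reproduce the standard argument.

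\emph{Setup.} Fix $\eta,\psi$ smooth and $L$-periodic, and let $\Theta$ be the harmonic extension of $\psi$ into $\Omega$ with $\nabla\Theta\in L^2$. For a perturbation $\eta+s\ol\eta$, let $\Theta_s$ be the corresponding harmonic extension. Analyticity of $s\mapsto G(\eta+s\ol\eta)\psi$ is standard, for instance by flattening via $(x,y)\mapsto(x,y-\eta)$, and we take it as known. Set
\[
\dot\Theta=\frac{d}{ds}\Theta_s\Big|_{s=0}.
\]

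\emph{Step 1: the equation for $\dot\Theta$.} Differentiating $\Theta_s(x,\eta+s\ol\eta)=\psi$ in $s$ gives
\[
\dot\Theta|_{y=\eta}=-\ol\eta\,\Theta_y|_{y=\eta}.
\]
We also need $\Theta_y|_{y=\eta}=B$. This follows from the chain-rule identities $\psi_x=\Theta_x+\eta_x\Theta_y$ and $G(\eta)\psi=\Theta_y-\eta_x\Theta_x$, which solve to
\[
\Theta_y|_{y=\eta}=B,\qquad \Theta_x|_{y=\eta}=\psi_x-B\eta_x=V.
\]
Hence $\dot\Theta$ is harmonic in $\Omega$, decays at depth, and has trace $-B\ol\eta$, so
\[
\dot\Theta=\text{harmonic extension of } (-B\ol\eta).
\]

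\emph{Step 2: differentiate $G$.} Differentiate
\[
G(\eta+s\ol\eta)\psi=\big[\p_y\Theta_s-(\eta_x+s\ol\eta_x)\p_x\Theta_s\big]\big|_{y=\eta+s\ol\eta}.
\]
The derivative splits into three pieces:
\begin{itemize}
\item the $\dot\Theta$ contribution, which equals $G(\eta)(-B\ol\eta)$ by Step 1;
\item the surface-motion term $\ol\eta\,(\Theta_{yy}-\eta_x\Theta_{xy})|_{y=\eta}$;
\item the term $-\ol\eta_x\Theta_x|_{y=\eta}=-\ol\eta_x V$.
\end{itemize}
The main computation is to show that the last two sum to $-\p_x(V\ol\eta)$. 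Harmonicity gives $\Theta_{yy}=-\Theta_{xx}$, so the surface-motion term becomes $-\ol\eta(\Theta_{xx}+\eta_x\Theta_{xy})|_{y=\eta}$. This is exactly $-\ol\eta\,\p_x\big(\Theta_x(x,\eta(x))\big)=-\ol\eta V_x$. Adding $-\ol\eta_xV$ yields $-\p_x(V\ol\eta)$, which proves \eqref{shapederi}. This bookkeeping of the $\ol\eta$-terms via $\Delta\Theta=0$ is the only delicate point. Everything can be justified for smooth data and extended by density and the analyticity of $G$.

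\emph{Parity.} Let $\eta$ be even and $\psi$ odd. Reflection $x\mapsto -x$ preserves the domain, so $\Theta$ is odd in $x$, and $G(\eta)$ maps odd functions to odd functions; thus $G(\eta)\psi$ is odd. Since $\psi_x\eta_x$ is odd and $1+\eta_x^2$ is even, $B$ is odd. Finally $\psi_x$ is even and $B\eta_x$ is even, so $V$ is even.
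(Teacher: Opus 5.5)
Your argument is correct and is the classical derivation behind this result. The paper gives no proof of its own: it quotes the formula from \cite{Lannes,Alazard}, and it uses the parity of $B,V$ through the same elementary check you give (cf.\ the proof of Lemma~\ref{lemm:pq}).

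One point should be phrased more carefully. In the periodic infinite-depth setting, $\dot\Theta$ tends to a constant as $y\to-\infty$, and this constant need not vanish (e.g.\ $\eta=0$, $\psi=\ol\eta=\cos x$ gives the limit $-\tfrac12$). So the identification of $\dot\Theta$ with the harmonic extension of $-B\ol\eta$ should rest on uniqueness in the class $\nabla\dot\Theta\in L^2$, not on decay of $\dot\Theta$ itself. The trace of $\dot\Theta$ on $\{y=\eta\}$ is then justified through the flattening you mention.
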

Next, we use Proposition \ref{shape} to calculate the differential of the nonlinearities $F_1$ and $F_2$ in  \eqref{eq:main_with_t}.
\begin{lemm}
    We have 
    \begin{equation}\label{eq:dF1}
    \begin{aligned}
        \frac{\delta F_1(\eta, \psi, c)}{\delta(\eta, \psi)}(\ol{\eta}, \ol{\psi})
        &=\partial_x\Big((c-V) \ol{\eta}\Big)+G(\eta)\Big(\ol{\psi}-B \ol{\eta}\Big)+\gamma\Big(\eta_x\ol{\eta}+\eta\ol{\eta}_x\Big)\\
        &=\partial_x\Big((c-V+\gamma\eta) \ol{\eta}\Big)+G(\eta)\Big(\ol{\psi}-B \ol{\eta}\Big),
    \end{aligned}
\end{equation}
\begin{multline}\label{eq:dF2}
       \frac{\delta F_2(\eta, \psi, c,P)}{\delta(\eta, \psi)}(\ol{\eta}, \ol{\psi})=(c-V)\ol\psi_x-\ol\eta-BV_x\ol\eta+BG(\eta)(\ol\psi-B\ol\eta)\\
       +\gamma\Big((\psi_x-V)\ol\eta+\eta\ol\psi_x+\p_x^{-1}G(\eta)(\ol\psi-B\ol\eta)+\Pi_0(V\ol\eta)\Big),
\end{multline}
where $B$ and $V$ are given by \eqref{def:B_and_V}. Moreover, we have 
\begin{equation}\label{identity}
    \begin{aligned}
     \frac{\delta F_2(\eta, \psi, c,P)}{\delta(\eta, \psi)}(\ol{\eta}, \ol{\psi})-B \frac{\delta F_1(\eta, \psi, c)}{\delta(\eta, \psi)}(\ol{\eta}, \ol{\psi})
    &=-\bigg(1+(V-c-\gamma\eta)B_x\bigg)\ol\eta\\
    &\quad+(c-V+\gamma\eta)\p_x(\ol\psi-B\ol\eta)\\
    &\quad +\gamma\p_x^{-1}G(\eta)(\ol\psi-B\ol\eta)+\gamma \Pi_0(V\ol\eta).
\end{aligned}
\end{equation}
\end{lemm}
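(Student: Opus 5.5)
The plan is a direct computation of both differentials using the shape-derivative formula \eqref{shapederi} and the chain rule, followed by an algebraic simplification for \eqref{identity}.

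\textbf{Differential of $F_1$.} Differentiating $G(\eta)\psi$ in $(\eta,\psi)$ and using Proposition \ref{shape} gives $G(\eta)\ol\psi - G(\eta)(B\ol\eta)-\partial_x(V\ol\eta)$. The term $c\partial_x\eta$ contributes $c\partial_x\ol\eta$. The term $\gamma\eta\eta_x=\frac{\gamma}{2}\partial_x(\eta^2)$ contributes $\gamma\partial_x(\eta\ol\eta)$. Collecting these yields both lines of \eqref{eq:dF1}.

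\textbf{Differential of $F_2$.} Set $N:=\eta_x\psi_x+G(\eta)\psi$, so that $B=N/(1+\eta_x^2)$.
\begin{itemize}
\item The quotient $N^2/(2(1+\eta_x^2))$ has variation $B\,\delta N - B^2\eta_x\ol\eta_x$. Here $\delta N=\eta_x\ol\psi_x+\psi_x\ol\eta_x+G(\eta)(\ol\psi-B\ol\eta)-\partial_x(V\ol\eta)$.
\item The term $-\psi_x^2/2$ contributes $-\psi_x\ol\psi_x$.
\item Using $V=\psi_x-B\eta_x$, the $\ol\psi_x$ terms combine to $(-\psi_x+B\eta_x)\ol\psi_x=-V\ol\psi_x$.
\item The $\ol\eta_x$ terms are $B\psi_x\ol\eta_x-B^2\eta_x\ol\eta_x-B\partial_x(V\ol\eta)=BV\ol\eta_x-BV\ol\eta_x-BV_x\ol\eta=-BV_x\ol\eta$.
\end{itemize}
This gives the irrotational part $(c-V)\ol\psi_x-\ol\eta-BV_x\ol\eta+BG(\eta)(\ol\psi-B\ol\eta)$.

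For the vorticity terms, $\gamma\eta\psi_x$ contributes $\gamma(\psi_x\ol\eta+\eta\ol\psi_x)$. The nonlocal term $\gamma\partial_x^{-1}G(\eta)\psi$ requires care, since $\partial_x^{-1}$ acts only on mean-zero functions; we note that $G(\eta)\ol\psi$ always has zero mean. By \eqref{dinverse}, $\partial_x^{-1}[G(\eta)(\ol\psi-B\ol\eta)+\partial_x(V\ol\eta)]$ contains $\partial_x^{-1}\partial_x(V\ol\eta)=\Pi_0^\perp(V\ol\eta)=V\ol\eta-\Pi_0(V\ol\eta)$. Hence the variation of the nonlocal term is $-\gamma V\ol\eta+\gamma\Pi_0(V\ol\eta)+\gamma\partial_x^{-1}G(\eta)(\ol\psi-B\ol\eta)$, which together with the previous contribution gives the bracket in \eqref{eq:dF2}.

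\textbf{The identity \eqref{identity}.} We subtract $B$ times \eqref{eq:dF1}, written in the form $\partial_x((c-V+\gamma\eta)\ol\eta)+G(\eta)(\ol\psi-B\ol\eta)$.
\begin{itemize}
\item The $BG(\eta)(\ol\psi-B\ol\eta)$ terms cancel.
\item We write $(c-V)\ol\psi_x+\gamma\eta\ol\psi_x=(c-V+\gamma\eta)\partial_x(\ol\psi-B\ol\eta)+(c-V+\gamma\eta)\partial_x(B\ol\eta)$.
\item Expanding $\partial_x(B\ol\eta)=B_x\ol\eta+B\ol\eta_x$, the $\ol\eta_x$ pieces cancel against $-B(c-V+\gamma\eta)\ol\eta_x$ coming from $-B\partial_x((c-V+\gamma\eta)\ol\eta)$.
\end{itemize}
The remaining zeroth-order coefficient of $\ol\eta$ is
\[
-1-BV_x+(c-V+\gamma\eta)B_x-B(-V_x+\gamma\eta_x)+\gamma(\psi_x-V).
\]
Here $-BV_x+BV_x$ cancels, and $\gamma(\psi_x-V)-\gamma B\eta_x=\gamma(\psi_x-V-B\eta_x)=0$ by the definition of $V$. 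What is left is $-(1+(V-c-\gamma\eta)B_x)$, which is the claimed coefficient.

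The only step needing care is the $\Pi_0$ correction from $\partial_x^{-1}\partial_x$; everything else is bookkeeping.
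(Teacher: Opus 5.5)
Your proposal is correct and follows the paper's proof essentially step for step. You apply the shape-derivative formula and the chain rule term by term, pick up the $\Pi_0$ correction from $\partial_x^{-1}\partial_x=\Pi_0^\perp$, and then subtract $B\,\delta F_1$ using $\psi_x-V-B\eta_x=0$ and $\ol\psi_x-B\ol\eta_x=\partial_x(\ol\psi-B\ol\eta)+B_x\ol\eta$. The only blemish is a sign slip: in the bracket $\partial_x^{-1}[G(\eta)(\ol\psi-B\ol\eta)+\partial_x(V\ol\eta)]$ the last term should be $-\partial_x(V\ol\eta)$ by \eqref{shapederi}, although your final expression for the variation of the nonlocal term is correct.
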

\begin{proof}
    We first note that \eqref{eq:dF1} follows from the shape-derivative formula \eqref{shapederi} and the formula 
    \begin{align*}
    \frac{\delta}{\delta(\eta, \psi)}\big(\gamma\eta\eta_x\big)(\ol{\eta},\ol{\psi})=\gamma(\eta_x\ol{\eta}+\eta\ol{\eta}_x).
\end{align*}
Likewise, we have
\begin{align*}
    \frac{\delta}{\delta(\eta, \psi)}\big(\gamma\eta\psi_x\big)(\ol{\eta},\ol{\psi})=\gamma(\psi_x\ol{\eta}+\eta\ol{\psi}_x). 
\end{align*}
Invoking \eqref{shapederi} again and using the fact that $\p_x^{-1}\p_x=\Pi_0^\perp$ (see \eqref{dinverse}), we find
\begin{align*}
    \frac{\delta}{\delta(\eta, \psi)}\bigg(\gamma\p_x^{-1}G(\eta)\psi\bigg)(\ol\eta,\ol\psi)=-\gamma\p_x^{-1}G(\eta)(B\ol{\eta})-\gamma\Pi_0^\perp(V\ol{\eta})+\gamma\p_x^{-1}G(\eta)\ol{\psi}.
\end{align*}
In addition, we have
\begin{align*}
    &\quad \frac{\delta}{\delta(\eta, \psi)}\frac{\big(\eta_x \psi_x+G(\eta) \psi\big)^2}{2\left(1+\eta_x^2\right)}(\ol{\eta},\ol{\psi})\\
    &=\frac{\big(\eta_x \psi_x+G(\eta) \psi\big)}{\left(1+\eta_x^2\right)}\frac{\delta}{\delta(\eta, \psi)}(\eta_x\psi_x+G(\eta)\psi)(\ol{\eta},\ol\psi)-\frac{\big(\eta_x \psi_x+G(\eta) \psi\big)^2}{2\left(1+\eta_x^2\right)^2}\frac{\delta}{\delta(\eta, \psi)}(1+\eta_x^2)(\ol\eta,\ol\psi)\\
    &=B\bigg(\eta_x\ol\psi_x+\ol\eta_x\psi_x-G(\eta)(B\ol\eta)-\p_x(V\ol\eta)+G(\eta)\ol\psi\bigg)-B^2\eta_x\ol\eta_x.
\end{align*}
Consequently, recalling that $V=\psi_x-B\eta_x$, we obtain
\begin{align*}
    &\quad \frac{\delta F_2(\eta, \psi, c,P)}{\delta(\eta, \psi)}(\ol{\eta}, \ol{\psi})\\
    &=c\ol\psi_x-\ol\eta-\psi_x\ol\psi_x+\Bigg(B\Big(\eta_x\ol\psi_x+\psi_x\ol\eta_x-G(\eta)(B\ol\eta)-\p_x(V\ol\eta)+G(\eta)\ol\psi\Big)-B^2\eta_x\ol\eta_x\Bigg)\\
    &\quad+\gamma\Big(\psi_x\ol{\eta}+\eta\ol{\psi}_x-\p_x^{-1}G(\eta)(B\ol{\eta})-\big(V\ol{\eta}-\Pi_0(V\ol\eta)\big)+\p_x^{-1}G(\eta)\ol{\psi}\Big)\\
    &=(c-\psi_x+B\eta_x)\ol\psi_x-\ol\eta+(B\psi_x-B^2\eta_x)\ol\eta_x-BV_x\ol\eta-BV\ol\eta_x+BG(\eta)(\ol\psi-B\ol\eta)\\
    &\quad +\gamma\Bigg((\psi_x-V)\ol\eta+\eta\ol\psi_x+\p_x^{-1}G(\eta)\ol\psi-\p_x^{-1}G(\eta)(B\ol\eta)+\Pi_0(V\ol\eta)\Bigg)\\
    &=(c-V)\ol\psi_x-\ol\eta-BV_x\ol\eta+BG(\eta)(\ol\psi-B\ol\eta)+\gamma\Big((\psi_x-V)\ol\eta+\eta\ol\psi_x+\p_x^{-1}G(\eta)(\ol\psi-B\ol\eta)+\Pi_0(V\ol\eta)\Big),
\end{align*}
proving \eqref{eq:dF2}. 

Finally,  \eqref{identity} follows from the following calculation: 
\begin{align*}
     &\quad\frac{\delta F_2(\eta, \psi, c,P)}{\delta(\eta, \psi)}(\ol{\eta}, \ol{\psi})-B \frac{\delta F_1(\eta, \psi, c)}{\delta(\eta, \psi)}(\ol{\eta}, \ol{\psi})\\
     &=(c-V)\ol\psi_x-\ol\eta-BV_x\ol\eta+BG(\eta)(\ol\psi-B\ol\eta)+\gamma\Big((\psi_x-V)\ol\eta+\eta\ol\psi_x+\p_x^{-1}G(\eta)(\ol\psi-B\ol\eta)+\Pi_0(V\ol\eta)\Big)\\
     &\quad-B\bigg(\partial_x\big((c-V) \ol{\eta}\big)+G(\eta)\big(\ol{\psi}-B \ol{\eta}\big)+\gamma\big(\eta_x\ol{\eta}+\eta\ol{\eta}_x\big)\bigg)\\
     &=(c-V)\ol\psi_x-\ol\eta-BV_x\ol\eta-B(c-V)\ol\eta_x+BV_x\ol\eta\\
     &\quad+\gamma\bigg((\psi_x-V)\ol\eta+\eta\ol\psi_x+\p_x^{-1}G(\eta)(\ol\psi-B\ol\eta)-B\eta_x\ol{\eta}-B\eta\ol{\eta}_x+\Pi_0(V\ol\eta)\bigg)\\
         &=(c-V)(\ol\psi_x-B\ol\eta_x)-\ol\eta+\gamma\bigg((\psi_x-V-B\eta_x)\ol\eta+\eta(\ol\psi_x-B\ol\eta_x)+\p_x^{-1}G(\eta)(\ol\psi-B\ol\eta)+\Pi_0(V\ol\eta)\bigg)\\
    &=(c-V+\gamma\eta)(\ol\psi_x-B\ol\eta_x)-\ol\eta+\gamma\p_x^{-1}G(\eta)(\ol\psi-B\ol\eta)+\gamma\Pi_0(V\ol\eta)\\
    &=(c-V+\gamma\eta)\Big(\p_x(\ol\psi-B\ol\eta)+B_x\ol\eta\Big)-\ol\eta+\gamma\p_x^{-1}G(\eta)(\ol\psi-B\ol\eta)+\gamma\Pi_0(V\ol\eta)\\
    &=-\big(1+(V-c-\gamma\eta)B_x\big)\ol\eta+(c-V+\gamma\eta)\p_x(\ol\psi-B\ol\eta)+\gamma\p_x^{-1}G(\eta)(\ol\psi-B\ol\eta)+\gamma\Pi_0(V\ol\eta).
\end{align*}

\end{proof}
From now on, we fix a Stokes wave $\left(\eta^*, \psi^*, c^*, P^*=0\right)$ as given in \eqref{expand:Stokes_without_g}, with amplitude $a=\eps$. Let  $(V^*, B^*)$ be the trace of the velocity of this Stokes wave, as given by \eqref{def:B_and_V}, 
\begin{equation}
    B^*=\frac{G(\eta^*) \psi^*+\partial_x \psi^* \partial_x \eta^*}{1+\left|\partial_x \eta^*\right|^2}, \quad V^*=\partial_x \psi^*-B^* \partial_x \eta^*.
\end{equation}
We fix $c=c^*$ and $P=P^*=0$ in \eqref{eq:main_with_t} and linearize   around $(\eta^*, \psi^*)$. The formulas \eqref{eq:dF1} and \eqref{eq:dF2} yield
\begin{align}\label{eq:etabar:00}
 \partial_t \ol{\eta}=\frac{\delta F_1(\eta^*, \psi^*, c^*)}{\delta(\eta, \psi)}(\ol{\eta}, \ol{\psi})&= \partial_x\Big((c^*-V^*+\gamma \eta^*) \ol{\eta}\Big)+G(\eta^*)\Big(\ol{\psi}-B^* \ol{\eta}\Big), \\
\nonumber\partial_t \ol{\psi}=\frac{\delta F_2(\eta^*, \psi^*, c^*, P^*)}{\delta(\eta, \psi)}(\ol{\eta}, \ol{\psi})&=-\big(1+(V-c-\gamma\eta)B_x\big)\ol\eta+(c-V+\gamma\eta)\p_x(\ol\psi-B\ol\eta)\\ 
&\quad+\gamma\p_x^{-1}G(\eta)(\ol\psi-B\ol\eta)+\gamma\Pi_0(V\ol\eta). \label{eq:psibar:00}
\end{align}
Guided by the irrotational case \cite{Huy21}, we use Alinhac's \textit{good unknown}
\begin{equation}\label{def:v1v2}
    v_1=\ol{\eta}, \quad v_2=\ol{\psi}-B^* \ol{\eta}.
\end{equation}
By invoking \eqref{identity}, the system \eqref{eq:etabar:00}-\eqref{eq:psibar:00} is equivalent to 
\begin{align}
    \label{eq:v1}\p_tv_1&=\p_x\Big((c^*-V^*+\gamma\eta^*)v_1\Big)+G(\eta^*)v_2,\\
    \label{eq:v2}\p_tv_2&=-\Big(1+(V^*-c^*-\gamma\eta^*)\p_xB^*\Big)v_1+\gamma \Pi_0(V^* v_1)+(c^*-V^*+\gamma\eta^*)\p_xv_2+\gamma\p_x^{-1}G(\eta^*)v_2.
\end{align}
In matrix form, this reads
\begin{equation}
    \label{eq:v_matrix}
    \p_t \vet{v_1}{v_2}=\Ls_\eps\vet{v_1}{v_2},\quad \Ls_\eps:=
    \begin{bmatrix}
        \p_x\circ (c^*-V^*+\gamma\eta^*)& G(\eta^*)\\
        -\Big(1+(V^*-c^*-\gamma\eta^*)\p_xB^*\Big)+\gamma\Pi_0\circ V^* & (c^*-V^*+\gamma\eta^*)\p_x+\gamma\p_x^{-1}G(\eta^*)
    \end{bmatrix},
\end{equation}
where $\Ls_\eps: H^1(\T)^2\to L^2(\T)^2$ continuously. 

 In particular, we have
\[
\Ls_0=\begin{bmatrix}
c_0\p_x & |D|\\
-1 & c_0\p_x+\gamma\p_x^{-1}|D|
\end{bmatrix}.
\]
The spectrum of $\Ls_0$ is comprised  of the purely imaginary eigenvalues 
\begin{equation}
\ld^\pm(k)=i\left[c_0k-\mez \gamma\sign(k)\pm \sqrt{\frac14 \gamma^2\sign^2(k)+|k|}\right],\quad k\in \Zz,
\end{equation}
which are the roots of the characteristic equation
\begin{equation}\label{chareq:0}
\ld^2-i\ld(2kc_0-\gamma \sign(k))+|k|-kc_0(kc_0-\gamma\sign(k))=0.
\end{equation}
\begin{lemm}\label{lemm:0ev}
$\ld=0$ is an isolated eigenvalue of $\Ls_0$ and its generalized eigenspace has dimension $4$. 
\end{lemm}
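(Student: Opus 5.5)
Since $\Ls_0$ has constant coefficients, my plan is to diagonalize it by Fourier series and reduce everything to $2\times 2$ matrices, one for each $k\in\Zz$. On $2\pi$-periodic functions, $\Ls_0$ leaves each subspace $\{(\alpha,\beta)e^{ikx}\}$ invariant. It acts there by the matrix
\[
M_k=\begin{bmatrix} ic_0k & |k|\\ -1 & ic_0k-i\gamma\,\sign(k)\end{bmatrix},\quad k\neq 0,\qquad
M_0=\begin{bmatrix} 0&0\\-1&0\end{bmatrix}.
\]
For $M_k$ I use $\gamma\p_x^{-1}|D|e^{ikx}=\gamma\frac{|k|}{ik}e^{ikx}$. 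For $M_0$ I use that $|D|$ and $\p_x$ annihilate constants, so the term $\gamma\p_x^{-1}|D|$ contributes nothing on the zero mode. Consequently $\Ls_0-\ld$ is block diagonal, and $\ld\in\rho(\Ls_0)$ precisely when every $M_k-\ld$ is invertible with $\sup_k\|(M_k-\ld)^{-1}\|<\infty$.

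The next step is to locate the zero eigenvalues of the blocks, using $\gamma=c_0-\frac1{c_0}$ from \eqref{eq:relation_c0_gamma}. The block $M_0$ is a nilpotent Jordan block, with eigenvector $(0,1)^T$ and generalized eigenvector $(1,0)^T$, so it contributes a two-dimensional generalized kernel. For $k\ne 0$,
\[
\det M_k=|k|-c_0k\big(c_0k-\gamma\sign(k)\big)=|k|\big(1-c_0^2|k|+c_0\gamma\big)=c_0^2|k|\,(1-|k|).
\]
This vanishes exactly for $k=\pm1$ and is nonzero for $|k|\ge2$. For $k=\pm1$ the trace is $\operatorname{tr}M_{\pm1}=\pm i(2c_0-\gamma)=\pm i\big(c_0+\frac1{c_0}\big)\neq0$. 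Hence $0$ is a simple eigenvalue of $M_{\pm1}$, the other eigenvalue being $\pm i(c_0+c_0^{-1})\ne0$. The explicit kernel vectors are $(1,-ic_0)^T e^{ix}$ and its conjugate, i.e.\ $\cos x$ and $\sin x$ type modes. Adding up gives $2+1+1=4$.

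It remains to show that $0$ is isolated and that the Riesz projection has rank exactly $4$. The eigenvalues of $M_k$ are the $\ld^\pm(k)$ listed above. For $|k|\ge 2$ they are nonzero, and $|\ld^\pm(k)|\ge c_0|k|-\frac{|\gamma|}2-\sqrt{\gamma^2/4+|k|}\to\infty$ as $|k|\to\infty$. So only finitely many of them lie in a fixed disc, and I can choose $\delta>0$ such that $D_\delta=\{|\ld|<\delta\}$ contains no eigenvalue of any $M_k$ other than $0$.

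The main technical point is a uniform resolvent bound $\sup_{k}\|(M_k-\ld)^{-1}\|<\infty$ for $0<|\ld|<\delta$. I would get it by writing $(M_k-\ld)^{-1}=\operatorname{adj}(M_k-\ld)/\det(M_k-\ld)$. The determinant $\det(M_k-\ld)=(\ld-\ld^+(k))(\ld-\ld^-(k))$ grows like $|k|^2$, while the adjugate entries grow at most like $|k|$. This gives boundedness, and the resolvent in fact maps into $H^1$. Thus $D_\delta\cap\sigma(\Ls_0)=\{0\}$, which is the isolation claim.

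Finally, the Riesz projection $\frac1{2\pi i}\oint_{|\ld|=\delta/2}(\ld-\Ls_0)^{-1}d\ld$ acts blockwise. It vanishes on every mode except $k\in\{0,\pm1\}$, and on those modes it is the spectral projection of $M_k$ at $0$. Its range is therefore spanned by the four vectors found above, so the generalized eigenspace has dimension $4$.
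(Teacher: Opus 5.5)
Your proposal is correct and follows essentially the same route as the paper. Both arguments reduce $\Ls_0$ to its Fourier blocks, find the double zero root at $k=0$ and simple zero roots at $k=\pm1$ (whose partner eigenvalues are $\pm i(c_0+c_0^{-1})\neq0$), and show that all other block eigenvalues stay away from $0$.

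The only differences are these. The paper obtains the explicit gap $\min\{1, c_0^2/(1+3c_0+c_0^{-1})\}$ for $|k|\ge2$ by comparing terms of the characteristic equation. You instead use $\det M_k=c_0^2|k|(1-|k|)\neq0$ together with $|\ld^\pm(k)|\to\infty$. You also make explicit the Jordan structure of $M_0$, the $k$-uniform resolvent bound, and the rank of the Riesz projection, all of which the paper leaves implicit.
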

\begin{proof}
Clearly $\ld^\pm(0)=0$. Furthermore, using the relation $\gamma=c_0-c_0^{-1}$, we find that $\ld^+(-1)=\ld^-(1)=0$. It remains to show that the set $\{\ld^+(k): k\ne 0, -1\}\cup \{\ld^-(k): k\ne 0, 1\}$ has positive distance to $0$. To this end, we first note that 
\begin{equation}\label{ldpm1}
\ld^+(1)=i(c_0+c_0^{-1}),\quad \ld^-(-1)= -i(c_0+c_0^{-1}). 
\end{equation}
We assume for the sake of contradiction that there exists $|k|\ge 2$ such that 
$$|\lambda|<\min\left\{1,\frac{c_0^2}{1+3c_0+\frac{1}{c_0}}\right\}.$$
Then, on one hand, we have
\begin{align*}
   \left| \ld^2-i\ld(2kc_0-\gamma \sign(k))\right| &\le  |\lambda|^2+|\lambda|\bigg(2c_0|k|+c_0+\frac{1}{c_0}\bigg)\\
    &\le |\lambda||k|+|\lambda|\bigg(2c_0|k|+(c_0+\frac{1}{c_0})|k|\bigg)\\
    &\le |\lambda||k|\bigg(1+3c_0+\frac{1}{c_0}\bigg)\\
    &< c_0^2|k|.
\end{align*}
On the other hand, we have 
\[
-|k|+kc_0(kc_0-\gamma\sign(k))=c_0^2(k^2-|k|)\ge c_0^2|k|,
\]
implying that $\ld$ cannot be a root of \eqref{chareq:0}. 
\end{proof}
When $\gamma=0$, it was proven in \cite{Huy21} that for all sufficiently small $\eps$,  $\Ls_\eps$ admits $\ld=0$ as an eigenvalue with algebraic multiplicity $4$.  This comes from the fact that   \eqref{eq:main} is invariant under translation in $x$ and the operation $\psi\mapsto \psi+C$, and  that the Stokes waves family depends on two parameters $a$ and $P$. We now extend this result to all $\gamma \in \Rr$:
\begin{theo}
    \label{thm:eigenvectors_R} 
For all $\gamma \in \Rr$, there exists $\eps_0=\eps_0(\gamma)>0$ such that for all $0<|\eps|<\eps_0$,  $0$ is an eigenvalue of $\Ls_\eps$ with algebraic multiplicity $4$ and geometric multiplicity $2$ or $3$. Moreover, 
    \begin{align}\label{form:R1}
        R_1&=[0,1]^T~\text{and}\\ \label{form:R2}
        R_2&=\Big[\p_x\eta^*, \p_x\psi^*-B^*\p_x\eta^*\Big]^T
        \end{align}
        are eigenvectors, 
\begin{align}
             R_3&=\Big[\p_a\eta\vert_{(a, P)=(\eps, 0)}, \big(\p_a\psi-B^*\p_a\eta\big)\vert_{(a, P)=(\eps, 0)}\Big]^T
        \end{align}
        can be an eigenvector or a generalized eigenvector, depending on the parameters $\eps$ and $\gamma$, and
        \begin{align}
        R_4&=\Big[\p_P\eta\vert_{(a, P)=(\eps, 0)}, \big(\p_P\psi-B^*\p_P\eta\big)\vert_{(a, P)=(\eps, 0)}\Big]^T
    \end{align}
       is a generalized eigenvector. 
 \end{theo}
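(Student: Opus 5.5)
The plan is to derive the eigenvector relations by differentiating the traveling-wave system \eqref{eq:main} along its symmetries and along the two-parameter family of Stokes waves, and to get the multiplicity count from Lemma \ref{lemm:0ev} by continuity of spectral projections.

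\textbf{Algebraic multiplicity.} By Lemma \ref{lemm:0ev}, $0$ is an isolated eigenvalue of $\Ls_0$ whose generalized eigenspace has dimension $4$. The family $\eps\mapsto\Ls_\eps$ is analytic in $\eps$ with common domain $H^1(\T)^2$, since the Stokes wave is analytic in $a$ (Theorem \ref{thm:Stokes_without_g}) and so is the Dirichlet--Neumann operator. Hence the Riesz projection over a small circle around $0$ varies continuously in $\eps$. Consequently, for small $\eps$ the total algebraic multiplicity of the spectrum inside the circle equals $4$.

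\textbf{Kernel and chain relations.} Write $F=(F_1,F_2)$. Recall that $\Ls_\eps$ equals the linearization $dF$ at the Stokes wave, conjugated by the good-unknown map $J(\ol\eta,\ol\psi)=(\ol\eta,\ol\psi-B^*\ol\eta)$, i.e.\ $\Ls_\eps=T\circ dF\circ J^{-1}$ with $T$ the same triangular map (this is \eqref{identity}). The relations then come from three sources.
\begin{enumerate}
\item Translation invariance. Applying $\p_x$ to \eqref{eq:main} gives $dF(\eta^*_x,\psi^*_x)=0$, so $\Ls_\eps R_2=0$.
\item Invariance under $\psi\mapsto\psi+C$. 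Since $G(\eta)1=0$, $\p_x^{-1}G(\eta)1=0$ and $\p_x 1=0$, we get $dF(0,1)=0$, so $\Ls_\eps R_1=0$. This can also be checked directly from \eqref{eq:v_matrix}, because $\gamma\p_x^{-1}G(\eta^*)1=0$.
\item The parameters $a$ and $P$. The family satisfies $F(\eta(a,P),\psi(a,P),c(a,P),P)=0$. Differentiating in $a$ at $(\eps,0)$ gives $dF(\p_a\eta,\p_a\psi)+\p_a c\,(\eta^*_x,\psi^*_x)=0$, hence $\Ls_\eps R_3=-\p_ac\,R_2$. Differentiating in $P$ gives $dF(\p_P\eta,\p_P\psi)+\p_Pc\,(\eta^*_x,\psi^*_x)+(0,1)=0$, hence $\Ls_\eps R_4=-\p_Pc\,R_2-R_1$.
\end{enumerate}
By \eqref{eq:symm_P}, in infinite depth $\p_P\eta=1$, $\p_P\psi=0$ and $\p_Pc=-\gamma$. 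So explicitly $R_4=(1,-B^*)$ and $\Ls_\eps R_4=\gamma R_2-R_1\neq0$, which makes $R_4$ a genuine generalized eigenvector. On the other hand, $\p_ac=2\eps c_2+O(\eps^2)$ may vanish depending on $\gamma$ and $\eps$, which is why $R_3$ may be either an eigenvector or a generalized eigenvector.

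\textbf{Linear independence.} It remains to show that $R_1,\dots,R_4$ are linearly independent for $0<|\eps|$ small. Then they span the $4$-dimensional generalized eigenspace, $0$ is the only eigenvalue inside the circle, and the geometric multiplicity is $2$ or $3$. It cannot be $4$, since $\Ls_\eps R_4\ne0$, and it is at least $2$ provided $R_1,R_2$ are independent. Parity separates the vectors: using the expansions \eqref{expand:Stokes_without_g}, $R_2=\eps(-\sin x,\ c_0\cos x-\cdots)+O(\eps^2)$ has odd first component, $R_3=(\cos x,\ c_0\sin x-\cdots)+O(\eps)$ has even first component, $R_1=(0,1)$, and $R_4=(1,-B^*)$. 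Normalizing $R_2/\eps$, the leading terms are $(0,1)$, $(-\sin x,c_0\cos x)$, $(\cos x,c_0\sin x)$ and $(1,0)$, which are clearly independent, and independence persists for small $\eps$.

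\textbf{Main obstacle.} The most delicate point is the bookkeeping of the nonlocal vorticity term $\gamma\p_x^{-1}G(\eta)\psi$ and of the mean term $\gamma\Pi_0(V^*v_1)$ when differentiating in $P$. Here I would recheck that $\p_P$ of this nonlocal term is consistent with \eqref{eq:dF2}, in particular the $\Pi_0$ correction, so that $\Ls_\eps R_4=\gamma R_2-R_1$ holds exactly.
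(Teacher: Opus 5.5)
Your proposal is correct and follows the paper's proof essentially step by step. It uses continuity of the spectral projection from Lemma~\ref{lemm:0ev} for the total multiplicity $4$, differentiation of \eqref{eq:main} in $x$, $a$ and $P$ (plus $G(\eta^*)1=0$) to get $\Ls_\eps R_1=\Ls_\eps R_2=0$, $\Ls_\eps R_3=-\p_a c\,R_2$ and $\Ls_\eps R_4=\gamma R_2-R_1\neq0$, and linear independence from the leading-order expansions. Your use of \eqref{eq:symm_P} to obtain $\p_P c=-\gamma$ and $R_4=(1,-B^*)^T$ exactly is a slightly cleaner justification than reading $\p_P c$ off the truncated expansion. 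The $\Pi_0$ bookkeeping you flag is automatic from the chain rule with the exact derivative \eqref{eq:dF2}, and a direct check using $G(\eta^*)B^*=-\p_x V^*$ confirms it.
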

\begin{proof}
According to Lemma \ref{lemm:0ev}, $0$ is an isolated  eigenvalue of $\Ls_0$ and its generalized eigenspace has dimension $4$. Since $\Ls_\eps$ is continuous in $\eps$,  the semicontinuity of  separated parts of the spectrum (see IV-\textsection 3.4 in \cite{Kato}) implies that when $\eps$ is small enough the spectrum of $\Ls_\eps$ is likewise  separated in two parts, where the part near $0$ has a total dimension of generalized eigenspaces equals  $4$.  

Since $G(\eta^*)1=\p_x1=0$, it follows from \eqref{eq:v_matrix} that
   \begin{equation}
       \label{eq:thmR1} 
       \Ls_\eps R_1=\Ls_\eps \vet{0}{1}=0.
   \end{equation}
We differentiate \eqref{eq:main} with respect to $x$ and evaluate the resulting equations at $(a,P)=(\eps,0)$ to get
\[
\frac{\delta F_1(\eta^*, \psi^*, c^*)}{\delta(\eta, \psi)}(\p_x\eta^*, \p_x\psi^*)=0,  
\quad \frac{\delta F_2(\eta^*, \psi^*, c^*, P^*)}{\delta(\eta, \psi)}(\p_x\eta^*, \p_x\psi^*)=0.
\] 
Invoking  the identities \eqref{eq:dF1} and \eqref{identity} with $\ol\eta=\p_x\eta^*,\ol\psi=\p_x\psi^*$, we obtain 
\begin{align*}
    &\partial_x\Big((c^*-V^*+\gamma\eta^*) \p_x\eta^*\Big)+G(\eta^*)\Big(\p_x\psi^*-B^* \p_x\eta^*\Big)=0,
\end{align*}
and
\begin{align*}
    &
    -\bigg(1+(V^*-c^*-\gamma\eta^*)\p_xB^*\bigg)\p_x\eta^*+(c^*-V^*+\gamma\eta^*)\p_x(\p_x\psi^*-B^*\p_x\eta^*)\\
    &\quad+\gamma\p^{-1}_xG(\eta^*)(\p_x\psi^*-B^*\p\eta^*)+\gamma \Pi_0(V\p_x\eta^*)=0.
\end{align*}
These two identities imply
    \begin{equation}
        \label{eq:thmR2}
        \Ls_\eps  R_2=0.
    \end{equation}
    Next, we  differentiate \eqref{eq:main} with respect to $a$ and evaluate it at $(a,P)=(\eps,0)$ to get
    \begin{align*}
&\frac{\delta F_1(\eta^*, \psi^*, c^*)}{\delta(\eta, \psi)}\bigg(\p_a\eta\vert_{(a, P)=(\eps, 0)}, \p_a\psi\vert_{(a, P)=(\eps, 0)}\bigg)=-\p_a c\vert_{(a, P)=(\eps, 0)}\p_x\eta^*,\\
& \frac{\delta F_2(\eta^*, \psi^*, c^*, P^*)}{\delta(\eta, \psi)}\bigg(\p_a\eta\vert_{(a, P)=(\eps, 0)}, \p_a\psi\vert_{(a, P)=(\eps, 0)}\bigg)=-\p_a c\vert_{(a, P)=(\eps, 0)}\p_x\psi^*.
\end{align*}   
Using \eqref{eq:dF1} and \eqref{identity} with $(\ol\eta,\ol\psi)=(\p_a\eta^*,\p_a\psi^*)\vert_{(a, P)=(\eps, 0)}$, we obtain
\begin{equation}
    \label{eq:thmR3}
    \begin{aligned}
        \Ls_\eps R_3
        &=\vet{-\p_a c\vert_{(a, P)=(\eps, 0)}\p_x\eta^*}{-\p_a c\vert_{(a, P)=(\eps, 0)}\p_x\psi^*+\p_a c\vert_{(a, P)=(\eps, 0)}B^*\p_x\eta^*}=-\p_a c\vert_{(a, P)=(\eps, 0)}R_2.
    \end{aligned}
\end{equation}
Hence, $R_3$ is either an eigenvector or a generalized eigenvector, depending on whether $\p_a c\vert_{(a, P)=(\eps, 0)}$ vanishes or not. 

 Finally, differentiating \eqref{eq:main} in $P$ yields
\begin{align*}
&\frac{\delta F_1(\eta^*, \psi^*, c^*)}{\delta(\eta, \psi)}\bigg(\p_P\eta\vert_{(a, P)=(\eps, 0)}, \p_P\psi\vert_{(a, P)=(\eps, 0)}\bigg)=-\p_P c\vert_{(a, P)=(\eps, 0)}\p_x\eta^*,\\
& \frac{\delta F_2(\eta^*, \psi^*, c^*, P^*)}{\delta(\eta, \psi)}\bigg(\p_P\eta\vert_{(a, P)=(\eps, 0)}, \p_P\psi\vert_{(a, P)=(\eps, 0)}\bigg)=-\p_P c\vert_{(a, P)=(\eps, 0)}\p_x\psi^*-1,
\end{align*}
and hence
\begin{equation}
    \label{eq:thmR4}
    \Ls_\eps  R_4=-\p_P c\vert_{(a, P)=(\eps, 0)}R_2-R_1=\gamma R_2-R_1\ne 0.
\end{equation}
We have used the fact that $\p_P c\vert_{(a, P)=(\eps, 0)}=-(c_0-\frac{1}{c_0})=-\gamma$ which in turn follows from the expansion for $c$ in \eqref{expand:Stokes_without_g}. Finally, the linear independence of the $R_j$'s can be readily seen from the expansions 
\bq
R_2=\vet{-\eps\sin x}{\eps c_0\cos x}+\rem 2,\quad R_3=\vet{\cos x}{c_0\sin x}+O(\eps),
    \eq
    provided  $\eps\ne 0$. 
\end{proof}
For long wave perturbations of the Stokes wave, we seek solutions to the linearized system \eqref{eq:v1}-\eqref{eq:v2} of the form 
\begin{equation}
\label{eq:v12_vstar12}
    v_j(x,t)=e^{\lambda t}e^{i\mu x}v_j^*(x),
\end{equation}
where  $v_j^*(x)$ is $2\pi$-periodic. We assume 
\bq\label{choice:mu} 
\mu=\frac{m_0}{n_0}\in (-\mez, \mez)\setminus\{0\}~\text{ is a small rational number and choose}~L=n_0 2\pi \gg 2\pi,
\eq
 so that the $v_j(\cdot, t)$ is $L$-periodic. The parameter $\mu$ is referred to as the \textit{Floquet exponent}. For conjugation with $e^{i\mu x}$, we recall from  \cite[Lemma 3.5]{Huy21}    that for any Fourier multiplier $a$ and $f\in L^2(\T)$, we have
\begin{equation}
    \label{eq:Bloch_general}
    e^{-i\mu x}a(D_L)\big(e^{i\mu x}f(x)\big)=a(D+\mu)f(x).
\end{equation}
\subsection{Hamiltonian formulation}
A linear system is called Hamiltonian of it has the form $\p_t f=JBf$, where 
    \[
J:=\begin{bmatrix}
    0& \mathrm{Id}\\
    -\mathrm{Id}&0
\end{bmatrix}
\]
is the canonical symplectic matrix and $B$ is self-adjoint. A transformation $T$ is called symplectic if  $T^*JT=J$. If $f$ satisfies a linear Hamiltonian system and $T$ is a symplectic transformation, then $Tf$ satisfies the Hamiltonian system $\p_t (Tf)=J(T^{-1})^*BT^{-1}(Tf)$. 

As discussed in \cite{ConstantinIvanovProdanov2008}, due to the presence of non-zero vorticity, the water wave system  \eqref{eq:main_with_t} of $(\eta, \psi)$ is not Hamiltonian with respect to the canonical symplectic matrix $J$. Nevertheless, \cite{Wahlen_Hamiltonian} discovered that  \eqref{eq:main_with_t}  can be transformed into a canonical Hamiltonian system via the change of variables 
\begin{equation}
(\eta, \psi)\mapsto (\eta, \xi),\quad \xi=\psi-\frac{\gamma}{2}\pinv\Pi_0^\perp \eta. 
\end{equation}
In this new variable pair $(\eta, \xi)$, the linearization of \eqref{eq:main_with_t} around the fixed Stokes wave is a canonical Hamiltonian system of the perturbation $(\ol{\eta}, \ol{\xi})$. 

Next, we use the symplectic  \textit{good unknown} transformation 
\begin{equation}
\label{eq:l12_eta_xi}
    \vet{l_1}{l_2}=
    \begin{bmatrix}
        1&0\\
        -B^* & 1
    \end{bmatrix}
    \vet{\ol\eta}{\ol\xi}, 
\end{equation}
so that  $(l_1,l_2)$ again satisfies a  Hamiltonian system. Recalling the definition of the original good unknown $(v_1, v_2)$ in \eqref{def:v1v2}, we obtain the relations
\begin{align}
    \label{eq:v1_l1_temp}l_1&=\ol{\eta}=v_1,\\   \label{eq:v2_l2_temp}
 l_2&=\ol\xi-B^*\ol\eta=\ol\psi-\frac{\gamma}{2}\pinv\Pi_0^\perp \ol\eta-B^*\ol\eta=v_2-\frac{\gamma}{2}\pinv\Pi_0^\perp v_1.
\end{align}
In view of the ansatz  \eqref{eq:v12_vstar12} for  $v_j$, we define 
 \bq
 S(\T_L)=\left\{f\in L^2(\T_L):~ e^{-i\mu x}f(x)~\text{is}~ 2\pi\text{-periodic}\right\},
 \eq
so that $v_j(\cdot ,t)\in S(\T_L)$ for each $t$. We also recall that $\mu=\frac{m_0}{n_0}\in (-\mez, \mez)\setminus\{0\}$ and $L=2\pi n_0$.  We have the following characterizations of $S(\T_L)$ in physical and frequency spaces. 
\begin{lemm}
    \label{lemma: STL_Fourier}
Let   $f\in L^2(\T_L)$. 

(1) $f\in S(\T_L)$ if and only if 
\bq\label{char:SL:0} 
f(x+2\pi)=e^{2\pi i \mu }f(x)\quad\forall x\in \Rr.
\eq

(2) $f\in S(\T_L)$ if and only if 
    \begin{equation}\label{char:SL}
        \forall k\notin n_0\Z+m_0,\quad \wh f^L(k)=0.
    \end{equation}
\end{lemm}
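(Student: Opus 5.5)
Both characterizations follow from elementary Fourier analysis on $\T_L$, using $L=2\pi n_0$ and $\mu=m_0/n_0$.

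\emph{Part (1).} Set $g(x)=e^{-i\mu x}f(x)$. Then
\[
g(x+2\pi)=e^{-i\mu x}e^{-2\pi i\mu}f(x+2\pi).
\]
Hence $g(x+2\pi)=g(x)$ for all $x$ if and only if $f(x+2\pi)=e^{2\pi i\mu}f(x)$ for all $x$, which is \eqref{char:SL:0}. The equivalence is pointwise, so it holds for a.e.\ $x$ when $f$ is merely $L^2$, and this is how it should be read. As a consistency check, iterating \eqref{char:SL:0} $n_0$ times gives $f(x+L)=e^{2\pi i m_0}f(x)=f(x)$, so the condition is compatible with $L$-periodicity.

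\emph{Part (2).} I use the Fourier definition from Section~\ref{sec:notation} and compute the coefficients of the translate $\tau f(x):=f(x+2\pi)$. Changing variables on the circle $\T_L$ gives
\[
\wh{\tau f}^L(k)=\int_{\T_L}f(x+2\pi)e^{-i\frac{2\pi}{L}kx}\,dx=e^{i\frac{2\pi}{L}k\cdot 2\pi}\wh f^L(k)=e^{2\pi i k/n_0}\wh f^L(k).
\]
By uniqueness of Fourier coefficients in $L^2(\T_L)$, condition \eqref{char:SL:0} is therefore equivalent to
\[
\big(e^{2\pi i k/n_0}-e^{2\pi i m_0/n_0}\big)\wh f^L(k)=0\quad\text{for all } k\in\Z.
\]
The prefactor vanishes exactly when $k/n_0-m_0/n_0\in\Z$, that is, when $k\in n_0\Z+m_0$. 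So the condition reduces to $\wh f^L(k)=0$ for all $k\notin n_0\Z+m_0$, which is \eqref{char:SL}. Combined with part (1), this proves (2).

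An alternative route to (2) is to expand $g$ directly: writing $g=\sum_j \wh g(j)e^{ijx}$ gives $f=\sum_j\wh g(j)e^{i(j+\mu)x}$, and the frequencies $(j+\mu)=\frac{2\pi}{L}(n_0j+m_0)$ correspond exactly to the indices $k=n_0j+m_0$. I would use the translation argument above as the main proof, since it avoids a separate convergence step.

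There is no genuine obstacle here. The only points needing care are the sign and normalization conventions in the Fourier transform, and the fact that the translation change of variables is legitimate because $f$ is $L$-periodic and $2\pi$ divides $L$.
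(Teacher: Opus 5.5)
Your proof is correct. Part (1) is exactly the paper's argument. For part (2) you take a slightly different route from the paper.

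The paper passes to $g=e^{-i\mu x}f$, which is $L$-periodic because $\mu L=2\pi m_0$. It then combines the modulation identity $\wh g^L(k)=\wh f^L(k+m_0)$ with the fact that an $L$-periodic function is $2\pi$-periodic if and only if its $L$-Fourier coefficients vanish off $n_0\Z$.

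You instead stay with $f$, start from the twisted periodicity \eqref{char:SL:0}, and use the translation identity $\wh{\tau f}^L(k)=e^{2\pi i k/n_0}\wh f^L(k)$. The two arguments are dual: modulation versus translation on the Fourier side.

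Your version is more self-contained. The support fact the paper quotes without proof is precisely your computation with the phase $e^{2\pi i\mu}$ replaced by $1$, and your only inputs are a change of variables and uniqueness of Fourier coefficients. It does, however, obtain (2) as a corollary of (1). The paper's version goes directly from the definition of $S(\T_L)$ and is shorter. It also makes visible the frequency shift by $m_0$ (that is, $k\mapsto k+\mu$ in $2\pi$-units) that reappears in the Bloch conjugation \eqref{eq:Bloch_general}; your ``alternative route'' is essentially this.

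One small correction to your commentary: justifying the translation change of variables needs only $L$-periodicity of $f$. The relation $L=2\pi n_0$ enters only when you evaluate the phase $e^{2\pi i k/n_0}$.
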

\begin{proof}
(1) The characterization \eqref{char:SL:0} equivalent to the fact that $e^{-i\mu x} f(x)$ is a $2\pi$-periodic function. 

(2) Since $L=2\pi n_0$, the $L$-periodic function $e^{-i\mu x}f$ is $2\pi$-periodic if and only if $ \wh{e^{-i\mu x}f}^L(k)= 0$ for all $k\notin n_0\Z$.  Recalling that $\mu=\frac{m_0}{n_0}$, we have $\wh{e^{-i\mu x}f}^L(k)=\wh{f}^L(k+m_0)$. Thus, $f\in S(\T_L)$ if and only if  \eqref{char:SL} holds.
\end{proof}
We recall the definition \eqref{def:FM} of Fourier multipliers $m(D_L)$. In particular, we will use the  Hilbert transform 
$\mathcal H_Lu$:
\begin{equation}
    \widehat{\mathcal{H}_L u}^L(k)=-i \sign(k) \widehat{u}^L(k),
\end{equation}
where
\bq\label{def:sign}
\sign(k)=\begin{cases}
1\quad\text{if}~k>0,\\
0\quad\text{if}~k=0,\\
-1 \quad\text{if}~k<0.
\end{cases} 
\eq
 We record some properties of $S(\T_L)$ in the next lemma.
\begin{lemm}
\label{lemma: STL} 
Let  $f\in S(\T_L)$. Then we have 

(1) $\Pi_0 f=0$. 

(2) If  $m(D_L)f\in L^2(\T_L)$ (see the definition \eqref{def:FM}), then $m(D_L)f\in S(\T_L)$. In particular, we have $\mathcal{H}f\in S(\T_L)$, and  if $\Pi_0 f=0$ then $\pinv f\in S(\T_L)$.
\end{lemm}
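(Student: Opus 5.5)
Both parts reduce to the Fourier characterization \eqref{char:SL} of Lemma \ref{lemma: STL_Fourier}, combined with the fact that $0<|\mu|<\mez$.

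For (1), I will use that $\Pi_0 f=\frac1L\wh f^L(0)$. By \eqref{char:SL}, $\wh f^L(0)$ can be nonzero only if $0\in n_0\Z+m_0$, that is, only if $m_0$ is a multiple of $n_0$. This would make $\mu=m_0/n_0$ an integer. Since $\mu\in(-\mez,\mez)\setminus\{0\}$ is not an integer, $0\notin n_0\Z+m_0$, so $\wh f^L(0)=0$ and $\Pi_0f=0$. An equivalent physical-space argument uses \eqref{char:SL:0}: the translation-invariance of the mean over $\T_L$ gives $\fint f=e^{2\pi i\mu}\fint f$, and $e^{2\pi i\mu}\neq1$ forces the mean to vanish.

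For (2), the definition \eqref{def:FM} gives
\[
\wh{m(D_L)f}^L(k)=m\Big(k\tfrac{2\pi}{L}\Big)\wh f^L(k).
\]
Hence the Fourier support of $m(D_L)f$ is contained in that of $f$, which lies in $n_0\Z+m_0$. Since $m(D_L)f\in L^2(\T_L)$ by assumption, part (2) of Lemma \ref{lemma: STL_Fourier} yields $m(D_L)f\in S(\T_L)$.

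The Hilbert transform has bounded symbol $-i\sign$, so $\mathcal H_L f\in L^2(\T_L)$ automatically and the general claim applies. For $\pinv f$, the mean-free primitive has symbol $(ik\frac{2\pi}{L})^{-1}$ at $k\neq0$ and $0$ at $k=0$. This symbol is bounded by $L/(2\pi)$ on the relevant frequencies, so $\pinv f\in L^2(\T_L)$, and it is again a Fourier multiplier applied to $f$. The hypothesis $\Pi_0 f=0$ is what makes $\pinv f$ well defined, although by (1) it holds automatically.

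No step is a real obstacle. The only point requiring care is verifying $0\notin n_0\Z+m_0$, which uses the non-integrality of $\mu$ from \eqref{choice:mu}.
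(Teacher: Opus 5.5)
Your proposal is correct and is essentially the paper's proof. For (1), the paper uses exactly the physical-space identity $\Pi_0 f=e^{2\pi i\mu}\Pi_0 f$ with $e^{2\pi i\mu}\neq 1$ that you give as your alternative; your Fourier version, $0\notin n_0\Z+m_0$ because $\mu\notin\Z$, is an equivalent restatement. For (2), the paper likewise reads the claim off the Fourier-support characterization of $S(\T_L)$, together with the boundedness of $\mathcal{H}_L$ and of $\pinv$ on mean-free functions in $L^2(\T_L)$.
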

\begin{proof}
 (1)     Since $f$ is $L$-periodic, we have 
 \[
  \Pi_0 f=\fint _0^L f(x)dx=\fint _0^L f(x+2\pi)dx. 
        \]
   Invoking \eqref{char:SL:0} yields 
   \[
   \Pi_0 f=\fint _0^L e^{2\pi i \mu }f(x)dx=e^{2\pi i \mu }\Pi_0f.
   \]
 But  $e^{2\pi i \mu }\ne 1$ for $0<|\mu|<\mez$, so   $\Pi_0 f=0$.

    (2) The first assertion is a direct consequence of the characterization \eqref{char:SL}. The second assertion follows from the boundedness of $\mathcal{H}: L^2(\T_L)\to L^2(\T_L) $  and  $\p_x^{-1}=\frac{1}{i D_L}: \{f\in L^2(\T_L): \Pi_0 f=0\}\to H^1(\T_L)$.
\end{proof}
By virtue of Lemma \ref{lemma: STL}, since $v_1$ and $v_2$ are in $S(\T_L)$, we have $\Pi_0v_1=0 $,  $\Pi_0^\perp v_1=v_1$. Hence,  the relations \eqref{eq:v1_l1_temp}-\eqref{eq:v2_l2_temp} simplify to 
\bq
    \label{eq:vtol}l_1=v_1,\quad l_2=v_2-\frac{\gamma}{2}\pinv v_1,
\eq
and  thus  $l_1$, $l_2\in S(\T_L)$.
\subsection{Conformal mapping}
In order to straighten Stokes wave's boundary $\eta^*$ in the term  $G(\eta^*)v_1$ of $\Ls_\eps $ \eqref{eq:v_matrix}, we  follow \cite{Huy21} and use a conformal mapping. More precisely, by   \cite[Proposition 3.3]{Huy21}, there exists a holomorphic bijection $z_1(x, y)+i z_2(x, y)$ from $\mathbb{R}_{-}^2=\{(x, y) \in\mathbb{R}^2: y<0\}$ onto $\left\{(x, y) \in \mathbb{R}^2: y<\eta^*(x)\right\}$, where $z_1(x+2 \pi, y)=2 \pi+z_1(x, y)$ and $z_2(x+2 \pi, y)=z_2(x, y)$ for all $(x, y) \in \mathbb{R}_{-}^2$, $z_1$ is odd in $x$, and $z_2$ is even in $x$. Then, we define the Riemann stretch 
\begin{equation}
    \zeta(x)=z_1(x,0),
\end{equation}
and the operators 
\begin{equation}
    \zeta_\sharp f=f\circ \zeta ,  \qquad   \zeta_*f=\zeta' (f\circ \zeta),
\end{equation}
so that  
\bq\label{identity:zetaop}
\zeta_*\p_xf=\p_x(\zeta_\sharp f).
\eq
 We have  
\bq\label{props:zeta}
\zeta(x)-x~\text{ is~odd,~} 2\pi\text{-periodic}~\text{and}~\zeta(x)-x=O(\eps).
\eq
 By  \cite[Lemma 3.4]{Huy21},  if  $f\in H^1(\T_L)$ then 
\begin{equation}\label{formula:GH}
    G\left(\eta^*\right) f=\partial_x\left(\zeta_{\sharp}^{-1} \mathcal{H}_L\left(\zeta_{\sharp} f\right)\right)
\end{equation}
which in conjunction with \eqref{identity:zetaop} yields 
\bq\label{identity:zetaG}
\zeta_*   G\left(\eta^*\right) f= |D_L|(\zeta_\sharp f).
\eq
\begin{lemm} \label{lemma:zeta}
   (1) If  $f\in L^2(\T_L)$, then
    \begin{equation}\label{Pi0zeta:1}
        \Pi_0 \zeta_*f=\Pi_0 f.
    \end{equation}
  If $g\in L^2(\T_L)$ and  $\Pi_0 g=0$, then 
    \begin{equation}\label{Pi0zeta:2}
        \pinv\zeta_*g=\Pi_0^\perp \zeta_\sharp\pinv g.
    \end{equation}

    (2) If $f(x)\in S(\T_L)$, then $\zeta_*f,\zeta_\sharp f\in S(\T_L)$. Conversely,  if $g\in L^2(\T_L)$ satisfies $\zeta_*g\in S(\T_L)$ or $\zeta_\sharp g\in S(\T_L)$, then $g\in S(\T_L)$. 
\end{lemm}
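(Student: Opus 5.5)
The whole argument rests on the structure of $\zeta$ recorded in \eqref{props:zeta}. Write $\zeta(x)=x+\beta(x)$ with $\beta$ odd and $2\pi$-periodic. Then $\zeta:\Rr\to\Rr$ is an increasing diffeomorphism, at least for $\eps$ small, since $\zeta'=1+O(\eps)>0$. Moreover $\zeta(x+2\pi)=\zeta(x)+2\pi$, and hence $\zeta(x+L)=\zeta(x)+L$ because $L=2\pi n_0$. It follows that $\zeta_\sharp$ and $\zeta_*$ map $L$-periodic functions to $L$-periodic functions. The same holds for $\zeta^{-1}$, which is also of the form $x+(\text{periodic})$.

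For part (1), I will compute $\Pi_0\zeta_*f$ by the substitution $y=\zeta(x)$:
\[
\fint_0^L\zeta'(x)f(\zeta(x))\,dx=\frac1L\int_{\zeta(0)}^{\zeta(0)+L}f(y)\,dy=\Pi_0f,
\]
where the last equality uses the $L$-periodicity of $f$. For \eqref{Pi0zeta:2}, let $g$ have mean zero and set $G=\pinv g$. By \eqref{identity:zetaop} we have $\zeta_*g=\zeta_*\p_xG=\p_x(\zeta_\sharp G)$. Applying $\pinv$ and using \eqref{dinverse}, namely $\pinv\p_x=\Pi_0^\perp$, gives $\pinv\zeta_*g=\Pi_0^\perp\zeta_\sharp G$. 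Note that $\zeta_*g$ has mean zero by the first identity, so the expression $\pinv\zeta_*g$ is well defined.

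For part (2), I will use the physical-space characterization \eqref{char:SL:0} from Lemma \ref{lemma: STL_Fourier}. If $f\in S(\T_L)$, then
\[
\zeta_\sharp f(x+2\pi)=f(\zeta(x)+2\pi)=e^{2\pi i\mu}f(\zeta(x))=e^{2\pi i\mu}\,\zeta_\sharp f(x).
\]
Since $\zeta'$ is $2\pi$-periodic, the same relation holds for $\zeta_*f=\zeta'\,\zeta_\sharp f$. We also need $\zeta_\sharp f,\zeta_*f\in L^2(\T_L)$, and this follows from the change of variables together with $\zeta'$ being bounded above and below. For the converse, apply the same argument with $\zeta^{-1}$ in place of $\zeta$. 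We have $\zeta^{-1}(y+2\pi)=\zeta^{-1}(y)+2\pi$, and $g=(\zeta^{-1})_\sharp(\zeta_\sharp g)$. Also $g=(\zeta^{-1})_*(\zeta_*g)$, because $(\zeta^{-1})'(y)\,\zeta'(\zeta^{-1}(y))=1$. The map $(\zeta^{-1})_\sharp$ preserves the twisted periodicity \eqref{char:SL:0}, by the same computation with the $2\pi$-periodic factor $(\zeta^{-1})'$, so $g\in S(\T_L)$.

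The only point needing care, though it is not really an obstacle, is that $\zeta$ is a genuine increasing diffeomorphism of $\Rr$ commuting with translation by $2\pi$. This is supplied by the conformal map properties, and for small $\eps$ directly by $\zeta'=1+O(\eps)$. Everything else is change of variables.
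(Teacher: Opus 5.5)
Your proof is correct and follows the same route as the paper. You use the substitution $y=\zeta(x)$ together with $\zeta(x+L)=\zeta(x)+L$ for \eqref{Pi0zeta:1}, and the identity $\zeta_*g=\p_x(\zeta_\sharp\pinv g)$ followed by $\pinv\p_x=\Pi_0^\perp$ for \eqref{Pi0zeta:2}. For part (2) you use the twisted-periodicity characterization \eqref{char:SL:0} with the $2\pi$-equivariance and bijectivity of $\zeta$; here you give more detail than the paper, which only says the converse "can be verified", by writing it out via $(\zeta^{-1})_\sharp$ and $(\zeta^{-1})_*$.
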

\begin{proof}
   (1) Since $\zeta(x)-x$ is $2\pi$-periodic, it is also $L$-periodic, and hence 
    \begin{align*}
        \Pi_0 f= \fint_0^L f(x)dx=\fint_0^L f(\zeta(x'))\zeta'(x')dx' =\Pi_0(\zeta_*f),
    \end{align*}
    proving \eqref{Pi0zeta:1}. 
   
   Next,  we assume that $g\in L^2(\T_L)$ and $\Pi_0 g=0$. Then  $f:=\pinv g \in L^2(\T_L)$, $\p_x f=g$, and hence 
 \eqref{identity:zetaop} implies
  \bq\label{Pi0zeta:10}
  \zeta_*g=\p_x\zeta_\sharp\pinv g.
  \eq
   Since $\Pi_0 \zeta_*g=\Pi_0 g=0$ in view of \eqref{Pi0zeta:1}, taking $\pinv$ of \eqref{Pi0zeta:10} yields \eqref{Pi0zeta:2}. 
   
    (2) can be verified using the characterization \eqref{char:SL:0} of $S(\T_L)$, that $\zeta(x)-x$ is $2\pi$-periodic, and that $\zeta: \Rr\to \Rr$ is a bijection. 
\end{proof}
Using the above conformal mapping, we make change of variables 
\bq
    \label{eq:ltoW} w_1=\zeta_* l_1,\quad w_2=\zeta_\sharp l_2.
\eq
It can be verified directly that the preceding transformation is symplectic,  so that $(w_1, w_2)$ again satisfies a Hamiltonian system. Moreover, $w_1$, $w_2\in S(\T_L)$ by Lemma \ref{lemma:zeta} (2).

Combining \eqref{eq:ltoW} with  \eqref{eq:vtol}, we find 
\bq\label{eq:vtoW}
w_1=\zeta_* v_1,\quad w_2=\zeta_\sharp v_2-\frac{\gamma}{2}\pinv \zeta_* v_1.
\eq
To arrive at the above formula for $w_2$, we have applied Lemmas \ref{lemma: STL} and \ref{lemma:zeta} to have 
\begin{equation*}
    v_1\in S(\T_L)\implies \zeta_\sharp \pinv v_1\in S(\T_L)\implies \zeta_\sharp \pinv v_1=\Pi_0^\perp \zeta_\sharp \pinv v_1=\pinv \zeta_* v_1.
\end{equation*}
Next, we use the relations in \eqref{eq:vtoW} and the system \eqref{eq:v1}-\eqref{eq:v2} satisfied by $(v_1, v_2)$ to obtain the Hamiltonian system satisfied by $(w_1, w_2)$. To this end, we define the functions
\begin{equation}\label{def:pq}
    p=\frac{c^*-\zeta_\sharp V^*+\gamma\zeta_\sharp \eta^*}{\zeta'},\qquad q=-p\p_x(\zeta_\sharp B^*)
\end{equation}
which depend only on the fixed Stokes wave. Then, for any functions $f_1,f_2\in H^1(\T_L)$, invoking the identities \eqref{identity:zetaop}, \eqref{identity:zetaG}, $\zeta_\sharp\Pi_0=\Pi_0$,  \eqref{formula:GH} and \eqref{dinverse}, we deduce 
\begin{equation}
\label{identity:zeta}
    \begin{aligned}
        &\quad \zeta_*\Big(\p_x\big((c^*-V^*+\gamma\eta^*)f_1\big)+G(\eta^*)f_2\Big)=\p_x\big(p(x)\zeta_*f_1\big)+|D_L|(\zeta_\sharp f_2),\\
        &\quad \zeta_\sharp\Big(-\big(1+(V^*-c^*-\gamma\eta^*)\p_xB^*\big)f_1+\gamma \Pi_0(V^* f_1)+(c^*-V^*+\gamma\eta^*)\p_xf_2+\gamma\p_x^{-1}G(\eta^*)f_2\Big)\\
        &=-\frac{1+q(x)}{\zeta'(x)}\zeta_*f_1+\gamma \Pi_0(V^* f_1)+p(x)\p_x(\zeta_\sharp f_2)+\gamma\Big(\mathcal{H}_L(\zeta_\sharp f_2)-\Pi_0\zeta_\sharp^{-1}\mathcal{H}_L(\zeta_\sharp f_2)\Big).
    \end{aligned}
\end{equation}
We note in particular that 
\begin{align*}
\zeta_\sharp \p_x^{-1}G(\eta^*)f_2&=\zeta_\sharp \p_x^{-1}\partial_x\left(\zeta_{\sharp}^{-1} \mathcal{H}_L\left(\zeta_{\sharp} f_2\right)\right)=\zeta_\sharp\Pi_0^\perp  \left(\zeta_{\sharp}^{-1} \mathcal{H}_L(\zeta_{\sharp} f_2)\right)\\
&=\mathcal{H}_L(\zeta_{\sharp} f_2)-\Pi_0\zeta_{\sharp}^{-1} \mathcal{H}_L(\zeta_{\sharp} f_2).
\end{align*}
Now we apply $\zeta_*$ to \eqref{eq:v1} and $\zeta_\sharp$ to \eqref{eq:v2} and invoke the identities in \eqref{identity:zeta}. Additionally,   Lemmas \ref{lemma: STL} and \ref{lemma:zeta} imply that  $\Pi_0(V^* v_1)=0$ and $\Pi_0\zeta_\sharp^{-1}\mathcal{H}_L(\zeta_\sharp v_2)=0$ since $V^*v_1\in S(\T_L)$ and $\zeta_\sharp^{-1}\mathcal{H}_L(\zeta_\sharp v_2)\in S(\T_L)$.  It follows that 
\begin{equation*}
    \begin{aligned}
        \p_t \zeta_*v_1
        &=\p_x (p(x)\zeta_*v_1)+|D_L|(\zeta_\sharp v_2),\\
        \p_t\zeta_\sharp v_2 
        &=-\frac{1+q(x)}{\zeta'(x)}\zeta_*v_1+p(x)\p_x(\zeta_\sharp v_2)+\gamma\mathcal{H}_L(\zeta_\sharp v_2).
    \end{aligned}
\end{equation*}
Then invoking the relations in \eqref{eq:vtoW} yields
\begin{align*}
    \p_t w_1&=\p_x(pw_1)+|D_L|\Big(w_2+\frac{\gamma}{2}\pinv w_1\Big),\\
\end{align*}
and 
\begin{align*}
    \p_t \Big(w_2+\frac{\gamma}{2}\pinv w_1\Big)&=-\frac{1+q}{\zeta'}w_1+p\p_x\Big(w_2+\frac{\gamma}{2}\pinv w_1\Big)+\gamma\mathcal{H}_L\Big(w_2+\frac{\gamma}{2}\pinv w_1\Big).
\end{align*}
These equations together imply 
\begin{align*}
    \p_t w_2
    &=-\frac{1+q}{\zeta'}w_1+p\p_xw_2+\frac{\gamma}{2}p\p_x\p_x^{-1}w_1+\gamma \mathcal{H}_L\Big(w_2+\frac{\gamma}{2}\pinv w_1\Big)\\
    &\qquad-\frac{\gamma}{2}\p_x^{-1}\p_x\Bigg(pw_1+\mathcal{H}_L\Big(w_2+\frac{\gamma}{2}\pinv w_1\Big)\Bigg)\\
    &=-\frac{1+q}{\zeta'}w_1+p\p_xw_2+\frac{\gamma}{2}\mathcal{H}_L\Big(w_2+\frac{\gamma}{2}\pinv w_1\Big).
\end{align*}
Where we have used the fact that $pw_1\in S(\T_L)$ and $\Pi_0\cH_L=0$, so that 
\[
\pinv \p_x(pw_1)=\Pi_0^\perp(p w_1)=pw_1,\quad \p_x^{-1}\p_x \cH_L=\cH_L.
\]
Thus, $(w_1, w_2)$ satisfies the Hamiltonian system
\begin{equation}\label{sys:W}
    \begin{aligned}
        \p_t\begin{bmatrix}
            w_1\\
            w_2
        \end{bmatrix}=\begin{bmatrix}
            \p_x\circ p+\frac{\gamma}{2}\mathcal{H}_L & |D_L|\\
            -\frac{1+q(x)}{\zeta^{\prime}(x)}+\frac{\gamma^2}{4}\pinv \mathcal{H}_L& p\p_x+\frac{\gamma}{2}\mathcal{H}_L
        \end{bmatrix}\begin{bmatrix}
            w_1\\
            w_2
        \end{bmatrix}.
    \end{aligned}
\end{equation}
The system \eqref{sys:W} is  simpler than the $v$-system \eqref{eq:v1}-\eqref{eq:v2} in that the genuine pseudo-differential operator $G(\eta_*)$  is transformed  to the Fourier multiplier $|D_L|$. The next lemma provides properties of the variable coefficients $p$, $q$, and $\zeta'$ of \eqref{sys:W}. 
\begin{lemm}\label{lemm:pq}
$\zeta$ is an odd function, $p$ and $q$ are even functions,  and the following  expansions hold:
\bq\label{expand:zeta}
    \zeta(x)=x+\eps\sin(x)+\eps^2\frac{1+2c_0^2+c_0^4}{4c_0^4}\sin(2x)+\rem 3,
\eq
\bq\label{expand:p}
    p=c_0-\eps\frac{\left(c_0^2+1\right)   \cos (x)}{c_0}+\eps ^2\frac{\left(c_0^2+1\right)  \Big(3 c_0^4+2 c_0^2+2 \left(c_0^4-4 c_0^2-1\right) \cos (2 x)+1\Big)}{8 c_0^5}+\rem 3,
    \eq
    \bq\label{expand:q}
     q=-\eps c_0^2   \cos (x)+\eps ^2\frac{\left(c_0^2+1\right)  \Big(\left(c_0^2-2\right) \cos (2 x)+c_0^2\Big)}{2 c_0^2}+\rem 3,
     \eq
     \bq\label{expand:qzeta}
  \frac{1+q}{\zeta'}=  1-\eps\left(c_0^2+1\right)   \cos (x)+\eps ^2 \left(c_0^2+1+\left(-\frac{1}{2 c_0^4}+c_0^2-\frac{2}{c_0^2}-\frac{1}{2}\right) \cos (2 x)\right)+\rem3.
\eq
\end{lemm}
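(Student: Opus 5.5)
The parity statements come first, followed by the three-term expansions obtained by inserting the Stokes expansion \eqref{expand:Stokes_without_g} (with $P=0$) into the definitions.

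\emph{Parity.} The conformal map $z_1+iz_2$ from \cite[Proposition 3.3]{Huy21} has $z_1$ odd in $x$, so $\zeta(x)=z_1(x,0)$ is odd and $\zeta'$ is even. By Theorem~\ref{thm:Stokes_without_g}, $\eta^*$ is even and $\psi^*$ is odd. Proposition~\ref{shape} then gives $B^*$ odd and $V^*$ even. Composition with the odd map $\zeta$ preserves parity, so $\zeta_\sharp V^*$ and $\zeta_\sharp\eta^*$ are even and $\zeta_\sharp B^*$ is odd. Hence $p=(c^*-\zeta_\sharp V^*+\gamma\zeta_\sharp\eta^*)/\zeta'$ is even. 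Since $\p_x(\zeta_\sharp B^*)$ is even, $q=-p\,\p_x(\zeta_\sharp B^*)$ is also even.

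\emph{Expansion of $\zeta$.} I would write $\zeta(x)=x+\eps\zeta_1(x)+\eps^2\zeta_2(x)+O(\eps^3)$ with $\zeta_j$ odd and $2\pi$-periodic, which is consistent with \eqref{props:zeta}. Because $z_1+iz_2$ is holomorphic on the lower half-plane with $z-(x+iy)$ bounded and periodic, we have $z_2(x,0)=\mathcal{H}$-conjugate of $\zeta(x)-x$ up to the constant mode. Concretely, with the sign convention fixed by $e^{ikx}\mapsto e^{ik(x+iy)}$ decaying only for $k<0$, each $\sin(kx)$ in $\zeta-x$ corresponds to $\cos(kx)e^{ky}$ in $z_2$. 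The boundary condition $z_2(x,0)=\eta^*(\zeta(x))$ is then solved order by order. At order $\eps$ it gives $\zeta_1=\sin x$ from $\eta^*=\eps\cos x+\dots$. At order $\eps^2$ it reads $\mathcal{C}\zeta_2=\eta_2+\eta_1'\,\zeta_1$ modulo constants, where $\eta_2$ is the $\cos 2x$ coefficient of $\eta^*$ and $\eta_1=\cos x$. This produces the $\sin(2x)$ coefficient
\[
\frac{1+2c_0^2-c_0^4}{4c_0^4}+\frac12=\frac{1+2c_0^2+c_0^4}{4c_0^4},
\]
which matches \eqref{expand:zeta}.

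\emph{Expansions of $p$, $q$ and $(1+q)/\zeta'$.} I would first compute $G(\eta^*)\psi^*$ to order $\eps^2$, using the standard expansion $G(\eta)=|D|+\big(D\eta D-|D|\eta|D|\big)+O(\eta^2)$. Since $\psi^*=O(\eps)$, only the linear term in $\eta$ is needed at order $\eps^2$. A cleaner alternative is the first equation of \eqref{eq:main}, which gives directly
\[
G(\eta^*)\psi^*=-(c^*+\gamma\eta^*)\eta^*_x .
\]
From this one obtains $B^*$ and $V^*=\psi^*_x-B^*\eta^*_x$ through second order. Next I would compose with $\zeta$ by Taylor expansion, $f(\zeta(x))=f(x)+\eps\sin x\, f'(x)+O(\eps^2)$; only first-order composition corrections enter at order $\eps^2$, because $f$ is itself $O(\eps)$. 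I would also divide by $\zeta'=1+\eps\cos x+\dots$, using $c^*=c_0+O(\eps^2)$ and, at order $\eps^2$, the value of $c_2$ from \eqref{expand:Stokes_without_g}. Finally, $q$ and $(1+q)/\zeta'$ follow by multiplying these truncated trigonometric polynomials, with the relation $\gamma=c_0-c_0^{-1}$ used to simplify. This step is routine symbolic algebra and can be checked with a computer algebra system.

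The main obstacle is keeping the second-order bookkeeping consistent. In particular one must fix the correct sign and conjugation convention for $z_2$ versus $\zeta$, and track the constant modes so that the mean of $p$ at order $\eps^2$ comes out right; this mean involves $c_2$ as well as $\gamma\,\Pi_0(\zeta_\sharp\eta^*)$. As a consistency check, I would verify that $\Pi_0$ of the resulting $\eps^2$ term reproduces the expression in \eqref{expand:p}.
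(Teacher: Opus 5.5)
Your proposal is correct and follows the paper's proof. You get parity from the oddness of $z_1$ and Proposition \ref{shape}. You get the $\zeta$-expansion from the boundary relation between $\zeta-x$ and the harmonic conjugate of $\eta^*\circ\zeta$, which you derive yourself where the paper quotes it from \cite{Huy21}. You then expand $B^*$ and $V^*$, compose with $\zeta$, and divide by $\zeta'$.

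One genuine improvement: your shortcut $G(\eta^*)\psi^*=-(c^*+\gamma\eta^*)\eta^*_x$, read off from the first equation of \eqref{eq:main}, is exact and removes the need for the Craig--Sulem expansion \eqref{eq:expansion:G_order_eps_3}. It reproduces the paper's
\[
B^*=\eps c_0\sin x+\eps^2\tfrac{1+c_0^2-c_0^4}{2c_0^3}\sin(2x)+\rem 3,
\qquad
V^*=\eps c_0\cos x+\eps^2\bigl(\tfrac{c_0}{2}+\tfrac{1+c_0^2-c_0^4}{2c_0^3}\cos(2x)\bigr)+\rem 3,
\]
and the stated expansions of $p$, $q$ and $(1+q)/\zeta'$ follow from these as you describe.
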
 
\begin{proof}
 $\zeta$ is odd because $\zeta(x)=z_1(x, 0)$ and $z_1$ is odd in $x$. Since $\eta^*$ is even and $\psi^*$ and $\zeta$ are odd, we deduce from \eqref{def:B_and_V} that  $B^*$ is odd and $V^*$ is even, and hence $p$, $q$, and $\zeta'$ are even.  We now proceed to expand these functions in powers of $\eps$. By \cite{Huy21} (3.15), $\zeta$ satisfies 
\begin{equation}
\label{eq:zeta_bootstrap}
    \zeta(x)=x-\frac{i}{2\pi}\sum_{k\ne 0}e^{ikx}\sign(k)\widehat{\eta\circ\zeta}(k).
\end{equation}
But  $\eta(x)=O(\eps)$, so $\zeta$ assumes the expansion 
\begin{equation}
    \zeta(x)=x+\eps\zeta_1+\eps^2\zeta_2+\rem 3.
\end{equation}
Writing $\eta=\eps \eta_1+\eps \eta_2+O(\eps^3)$, we find 
\begin{equation*}
    \eta(\zeta(x))=\eps\eta_1+\eps^2\big((\p_x\eta_1)\zeta_1+\eta_2\big)+\rem 3.
\end{equation*}
Combining this with \eqref{expand:Stokes_without_g} and \eqref{eq:zeta_bootstrap} yields $ \zeta_1=\sin x$. 
Then 
\begin{equation*}
   (\p_x\eta_1)\zeta_1+\eta_2=-\frac{1-\cos(2x)}{2}+\frac{1+2c_0^2-c_0^4}{4c_0^4}\cos(2x),
\end{equation*}
and hence
\begin{equation*}
    \zeta_2(x)=-i\sign(D)\big((\p_x\eta_1)\zeta_1+\eta_2\big)=\frac{1+2c_0^2+c_0^4}{4c_0^4}\sin(2x).
\end{equation*}
We have proven \eqref{expand:zeta}.  

Next, we use the formula  \eqref{def:B_and_V} for $B^*$, $V^*$ and  the expansion  \eqref{eq:expansion:G_order_eps_3}  for $G(\eta^*)\psi^*$ to obtain 
\begin{align*}
    B^*&=\eps c_0\sin x+\eps^2 \frac{1+c_0^2-c_0^4}{2 c_0^3} \sin (2 x)+\rem 3,\\
    V^*&=\eps c_0 \cos x+\eps^2\left(\frac{c_0}{2}+\frac{1+c_0^2-c_0^4}{2 c_0^3} \cos (2 x)\right)+\rem 3,
\end{align*}
After that, recalling the definition of \eqref{def:pq} of $p$ and $q$, we arrive at the expansions \eqref{expand:p}, \eqref{expand:q}, and \eqref{expand:qzeta}.
\end{proof}
\subsection{A desingularizing transformation} 
In view of the $v_j$-to-$w_j$ relations in \eqref{eq:vtoW}, the time dependence of the original ansatz \eqref{eq:v12_vstar12} for $v_j$, and the fact that $w_j\in S(\T_L)$, we have 
\begin{align}
      \label{eq:q1_W1}w_j=e^{\lambda t}e^{i\mu x}q_j(x),
\end{align}
where $q_j:\T\to \Cc$ are time independent.  Then, using  \eqref{sys:W} and  \eqref{eq:Bloch_general}, we arrive at the eigenvalue problem
\begin{equation}
    \label{eq:eigen_Q} \lambda Q=\Ls^\sharp_{\mu,\eps}Q,\quad Q=
    \begin{bmatrix}
        q_1\\
        q_2
    \end{bmatrix},
\end{equation}
\begin{equation}
        \Ls^\sharp_{\mu, \eps}:=
\begin{bmatrix}
p(i\mu+\p_x) +\p_x p+\frac{\gamma}{2}\big(-i\sign(D+\mu)\big) & |D+\mu|\\
-\frac{1+q}{\zeta'}+\frac{\gamma^2}{4}(i\mu+\p_x)^{-1}\big(-i\sign(D+\mu)\big)
&p(i\mu+\p_x)+\frac{\gamma}{2}\big(-i\sign(D+\mu)\big)
\end{bmatrix},
\end{equation}
where $\mu$ is  a small nonzero rational number  as in \eqref{choice:mu}. We observe that the preceding formula for  $\Ls^\sharp_{\mu, \eps}$ is well-defined for all $\mu \in (-\mez, \mez)\setminus\{0\}$ as a bounded operator from $(H^1(\T))^2$ to $(L^2(\T))^2$. Therefore, we  extend $ \Ls^\sharp_{\mu, \eps}$ to all $\mu \in (-\mez, \mez)\setminus\{0\}$. The extended $\Ls^\sharp_{\mu, \eps}$ is Hamiltonian:
\begin{align*}
    \Ls^\sharp_{\mu,\eps}&=J\mathscr B^\sharp,\\
    \mathscr B^\sharp
    &=\begin{bmatrix}
        \frac{1+q}{\zeta'}-\frac{\gamma^2}{4}(i\mu+\p_x)^{-1}\big(-i\sign(D+\mu)\big)&-p(i\mu+\p_x)-\frac{\gamma}{2}\big(-i\sign(D+\mu)\big)\\
        p(i\mu+\p_x) +\p_x p+\frac{\gamma}{2}\big(-i\sign(D+\mu)\big)&|D+\mu|
    \end{bmatrix}.
\end{align*}
Moreover, since $\text{spec}_{(L^2(\T))^2}(\Ls^\sharp_{-\mu, \eps})$ is the conjugation of $\text{spec}_{(L^2(\T))^2}(\Ls^\sharp_{\mu, \eps})$, we will only consider $\mu\in(0,\frac{1}{2})$. 

Although the inverse $(i\mu +\p_x)^{-1}$ in the $(2, 1)$-entry of  $\Ls^\sharp_{\mu, \eps}$ is well-defined for $\mu\in (-\mez, \mez)\setminus\{0\}$, it is unbounded as $\mu\to 0$ since $(i\mu +\p_x)^{-1}1=\frac{1}{i\mu}$. This obstructs any analytic continuation from $\mu=0$. To circumvent this, we remove $(i\mu +\p_x)^{-1}$ using the singular symplectic transform 
\bq\label{def:A}
A=\begin{bmatrix}
    |D+\mu|^{1/2}&0\\
    0&|D+\mu|^{-1/2}
\end{bmatrix}
\eq
and the change of parameters
\bq
\nu =\sqrt{\mu}\in \left(0, \frac{1}{\sqrt{2}}\right).
\eq
Then, it follows from \eqref{eq:eigen_Q} that 
\bq\label{eq:QtoU}
U:=A^{-1}Q 
\eq
solves 
\begin{equation}
\label{eq:eigen_U}
    \lambda U=\L_{\nu,\eps}U,
\end{equation}
where 
\begin{align*}
    \L_{\nu,\eps}&:=A^{-1}\Ls^\sharp_{\mu,\eps}A\\
    &=\scalebox{0.8}{$\displaystyle\begin{bmatrix}
|D+\nu^2|^{-1/2} (i\nu^2+\p_x)\circ p |D+\nu^2|^{1/2}+\frac{\gamma}{2}\big(-i\sign(D+\nu^2)\big) & 1\\
-|D+\nu^2|^{1/2}\circ\frac{1+q}{\zeta'}\circ |D+\nu^2|^{1/2}-\frac{\gamma^2}{4}
&|D+\nu^2|^{1/2}\circ p(i\nu^2+\p_x)|D+\nu^2|^{-1/2}+\frac{\gamma}{2}\big(-i\sign(D+\nu^2)\big)
\end{bmatrix}$}\\
&=\begin{bmatrix}
    i\sign(D+\nu^2)\Big(|D+\nu^2|^{1/2} \circ p|D+\nu^2|^{1/2}-\frac{\gamma}{2}\Big)& 1\\
    -|D+\nu^2|^{1/2}\circ\frac{1+q}{\zeta'} |D+\nu^2|^{1/2}-\frac{\gamma^2}{4} & \Big(|D+\nu^2|^{1/2}\circ p |D+\nu^2|^{1/2}-\frac{\gamma}{2}\Big)i\sign(D+\nu^2)
\end{bmatrix}.
\end{align*}
After the desingularizing transformation $A$, $\L_{\nu,\eps}$ is well-defined for $\nu\in [0, \frac{1}{\sqrt{2}})$ and is bounded as $\nu \to 0^+$.  This resolves the aforementioned unboundedness issue of  $\Ls^\sharp_{\mu,\eps}$ as $\mu\to 0$. However, $\L_{\nu,\eps}$ is still discontinuous at $\nu=0$ since  $\sign(D+\nu^2)1=\sign(\nu^2)=1$ while $\sign(D)1=0$.   On the other hand, since our goal is to obtain the spectrum of $\L_{\nu,\eps}$ for $\nu>0$, we can modify the definition of $\sign(0)$ so as to make $\L_{\nu,\eps}$ right continuous at $\nu=0$. This amounts to replace the $\sign$ function in  $\L_{\nu,\eps}$ with 
\begin{equation}\label{def:signplus}
    \signp(X):=\begin{cases}
        1,&X\ge 0\\
        -1,&X<0
    \end{cases}.
\end{equation}
We have $\sign^+(k+\nu^2)=\sign^+(k)$ for all $k\in \mathbb{Z}$ and $\nu\in [0, \frac{1}{\sqrt{2}})$. We thus consider the modified operator
\begin{equation}\label{def:Lplus}
    \L_{\nu,\eps}^+:=\begin{bmatrix}
    i\sign^+(D)\Big(|D+\nu^2|^{1/2} \circ p|D+\nu^2|^{1/2}-\frac{\gamma}{2}\Big)& 1\\
    -|D+\nu^2|^{1/2}\circ\frac{1+q}{\zeta'} |D+\nu^2|^{1/2}-\frac{\gamma^2}{4} & \Big(|D+\nu^2|^{1/2}\circ p |D+\nu^2|^{1/2}-\frac{\gamma}{2}\Big)i\sign^+(D)
\end{bmatrix}
\end{equation}
defined for all $\nu\in [0, \frac{1}{\sqrt{2}})$, and we have by design that 
\bq\label{LLplus}
 \L_{\nu,\eps}^+= \L_{\nu,\eps}\quad\forall \nu\in (0, \frac{1}{\sqrt{2}}).
\eq
For the purpose of analyticity in a full neighborhood of $\nu=0$, we extend $|D+\nu^2|^\mez$ for $\nu\ge 0$ to 
\begin{equation}
    \label{def:S_nu}
    S_\nu:=\nu\Pi_0+|D+\nu^2|^{1/2}\Pi_0^\perp,\quad \nu\in(-\frac{1}{\sqrt2},\frac{1}{\sqrt2}).
\end{equation}
Indeed, when $\nu\ge 0$, we have $|D+\nu^2|^{1/2}=|\nu|\Pi_0+|D+\nu^2|^{1/2}\Pi_0^\perp=S_\nu$. Moreover,  
\bq
S_\nu ^2=\nu^2\Pi_0+|D+\nu^2|\Pi_0^\perp=|D+\nu^2|, \quad \nu\in(-\frac{1}{\sqrt2},\frac{1}{\sqrt2}). 
\eq
We then accordingly extend $ \L_{\nu,\eps}^+$ to the following operator, which is still denoted by $ \L_{\nu,\eps}^+$: 
\begin{equation}
\label{def:Lplus_new}
        \L_{\nu,\eps}^+:=\begin{bmatrix}
    i\sign^+(D)\Big(S_\nu  \circ pS_\nu -\frac{\gamma}{2}\Big)& 1\\
    -S_\nu \circ\frac{1+q}{\zeta'} S_\nu -\frac{\gamma^2}{4} & \Big(S_\nu \circ p S_\nu -\frac{\gamma}{2}\Big)i\sign^+(D)
\end{bmatrix},\quad \nu\in(-\frac{1}{\sqrt2},\frac{1}{\sqrt2}).
\end{equation}
$ \L^+_{\nu,\eps}: (H^1(\T))^2\to (L^2(\T))^2$ is again Hamiltonian:
\begin{equation}
    \label{def:cB}
    \begin{aligned}
        \L^+_{\nu,\eps}&=J\B_{\nu,\eps},\\
        \B_{\nu,\eps}&=\begin{bmatrix}
            S_\nu \circ\frac{1+q}{\zeta'} S_\nu +\frac{\gamma^2}{4} & -\Big(S_\nu \circ p S_\nu -\frac{\gamma}{2}\Big)i\sign^+(D)\\
            i\sign^+(D)\Big(S_\nu  \circ pS_\nu -\frac{\gamma}{2}\Big)& 1
        \end{bmatrix}.
    \end{aligned}
\end{equation}
In the next proposition, we verify  that the modified operator  $\L^+_{\nu,\eps}$ is indeed analytic in $\nu\in (-\frac{1}{\sqrt2},\frac{1}{\sqrt2})$.
\begin{prop}
\label{prop:L+}
(1) $\L^+_{\nu,\eps}$  is reversible in the sense that
    \begin{equation}
    \label{eq:def_rev}
        \newRev\L^+_{\nu,\eps}=-\L^+_{\nu,\eps}\newRev,
    \end{equation}
    \begin{equation}
    \label{eq:def_rev_R}
        \newRev\begin{bmatrix}
            u_1(x)\\
            u_2(x)
        \end{bmatrix}:=\begin{bmatrix}
            -\ol u_1(-x)\\
            \ol u_2(-x)
        \end{bmatrix}.
    \end{equation}

    (2) There exists  $\eps_0=\eps_0(\gamma)>0$ such that $\L^+_{\nu,\eps}: (H^1(\T))^2\to (L^2(\T))^2$ is analytic in $\nu,\eps\in (-\frac{1}{\sqrt2},\frac{1}{\sqrt2})\times (-\eps_0,\eps_0)$.
\end{prop}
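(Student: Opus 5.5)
For part (1), I will check reversibility entry by entry, using the block form \eqref{def:Lplus_new}. Write $\rho f(x):=\ol f(-x)$. Then $\newRev=\mathrm{diag}(-\rho,\rho)$. The key commutation relations for a real Fourier multiplier $m(D)$ are:
\begin{itemize}
\item $\rho\, m(D)=m(-D)\rho$, where real means $m$ is real-valued;
\item for $S_\nu$: on $e^{ikx}$ we get $\rho(S_\nu e^{ikx})=s_\nu(k)e^{ikx}$ and $S_\nu\rho e^{ikx}=s_\nu(k)e^{ikx}$, since $\rho e^{ikx}=e^{ikx}$, so $\rho S_\nu=S_\nu\rho$;
\item for $i\signp(D)$: $\rho\, i\signp(D)e^{ikx}=-i\signp(k)e^{ikx}$, so $\rho\, i\signp(D)=-i\signp(D)\rho$;
\item for the coefficients: $p$ and $\frac{1+q}{\zeta'}$ are real and even by Lemma \ref{lemm:pq}, hence commute with $\rho$.
\end{itemize}
With these, set $T:=S_\nu pS_\nu-\frac{\gamma}{2}$ and $M:=S_\nu\frac{1+q}{\zeta'}S_\nu+\frac{\gamma^2}{4}$. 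Both commute with $\rho$, and $\rho$ anticommutes with $i\signp(D)$. The operator then reads $\L^+=\begin{bmatrix} i\signp T&1\\ -M & Ti\signp\end{bmatrix}$. Computing $\newRev\L^+ u$ and $-\L^+\newRev u$ componentwise gives, in the first row, $-\rho(i\signp T u_1+u_2)=i\signp T\rho u_1-\rho u_2$. The right side gives $-(i\signp T(-\rho u_1)+\rho u_2)$, which is the same. The second row matches similarly. I will present this as a short verification.

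For part (2), I will split $\L^+_{\nu,\eps}$ into products of three pieces:
\begin{itemize}
\item the $\nu$-independent bounded multipliers $i\signp(D)$;
\item the multiplication operators $p$ and $\frac{1+q}{\zeta'}$, which are analytic in $\eps$;
\item the multiplier $S_\nu$, which carries all the $\nu$-dependence.
\end{itemize}

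Analyticity in $\eps$ of $p,q,\zeta'$ as maps into $H^{\sigma,s}$ for some $\sigma>0$ and $s$ large follows from three facts. First, Theorem \ref{thm:Stokes_without_g}(i) gives analyticity of $(\eta^*,\psi^*,c^*)$. Second, the Dirichlet–Neumann operator is analytic in $\eta$ \cite{Analyticity_of_DN_op}, which handles $B^*$ and $V^*$. Third, $\zeta$ is obtained from the fixed-point equation \eqref{eq:zeta_bootstrap} by the analytic implicit function theorem, and inversion $1/\zeta'$ is analytic near $\zeta'=1$. Since $H^{\sigma,s}$ with $s>1/2$ is an algebra embedded in $W^{1,\infty}$, multiplication by such a function is bounded on $H^1$ and on $L^2$. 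It remains to show that $\nu\mapsto S_\nu$ is analytic, with values in bounded operators from $H^{1}$ to $H^{1/2}$ and from $H^{1/2}$ to $L^2$.

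The multiplier of $S_\nu$ is $\nu$ at $k=0$, which is linear, and $|k+\nu^2|^{1/2}$ at $k\neq0$. For $|\nu|<1/\sqrt2$ and $k\ne0$ we have $|k+\nu^2|=|k|(1+\sign(k)\nu^2/|k|)$ with $|\nu^2/k|\le1/2$. Therefore
\[
|k+\nu^2|^{1/2}=|k|^{1/2}\sum_{n\ge0}\binom{1/2}{n}\sign(k)^n\frac{\nu^{2n}}{|k|^n}.
\]
Here the series of operators $\sum_n \nu^{2n}\binom{1/2}{n}\sign(D)^n|D|^{1/2-n}\Pi_0^\perp$ converges in $\mathcal L(H^{t},H^{t-1/2})$ uniformly for $|\nu|\le r<1/\sqrt2$. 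Each term has norm at most $|\binom{1/2}{n}|\nu^{2n}$, and this bound is summable for $\nu^2<1$.

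Composing these pieces, $S_\nu pS_\nu$ is analytic into $\mathcal L(H^1,L^2)$. For the middle factor this uses multiplication by $p$ on $H^{1/2}$, which is fine since $p$ is smooth. Adding the bounded $\gamma$-terms then gives the claim for all four entries.

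The main obstacle is a point of care rather than difficulty: the first-order entries combine two half-derivatives, so I must make sure that the $H^1\to L^2$ bound is uniform in the $\eps$-analytic coefficients. Using the analytic-in-$\eps$ $H^{\sigma,s}$ expansions of $p$ handles this. The other point to watch is that reversibility needs $p$ and $(1+q)/\zeta'$ to be real as well as even, and this holds since they are built from real functions.
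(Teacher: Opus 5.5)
Your proposal is correct and follows the paper's proof. For reversibility, you use that $\rho$ commutes with $S_\nu$ and with multiplication by the real even functions $p$ and $\frac{1+q}{\zeta'}$, together with $\rho\, i\signp(D)=-i\signp(D)\rho$; for analyticity, you reduce to $S_\nu$ via the same binomial series on $\Pi_0^\perp$, adding explicit norm bounds and a sketch of the $\eps$-analyticity of the coefficients, which the paper takes for granted.

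The only slip is your first bullet, which should read $\rho\, m(D)=m(D)\rho$ for real $m$ (because $\rho e^{ikx}=e^{ikx}$; the $m(-D)$ rule belongs to the unconjugated reflection $f\mapsto f(-x)$), but you never use it, and your direct checks for $S_\nu$ and $i\signp(D)$ are correct.
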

\begin{proof}

(1) We define $\rev f(x)=\ol f(-x)$, so that    
\begin{equation}
    \label{eq:R_rev}
        \newRev\begin{bmatrix}
            u_1\\
            u_2
        \end{bmatrix}=\begin{bmatrix}
            -\rev u_1\\
            \rev u_2
        \end{bmatrix}.
    \end{equation}
    Since $\widehat{\rev f}(k)=\overline{\hat{f}(k)}$, we have $a(D)\rev f=\rev(a(D)f)$ for any real symbol $a$. In addition, $\rev(if)=-i\rev f$  and  $\rev(fg)=g\rev f$ if $g$ is real-valued. Using these properties of $\rev$, one can readily verify \eqref{eq:def_rev}. 

(2) Clearly $\L^+_{\nu,\eps}: (H^1(\T))^2\to (L^2(\T))^2$ is bounded. We observe that entries of $\L^+_{\nu,\eps}$ are compositions of Fourier multipliers and multiplications by  analytic-in-$\eps$ functions $p(x)$ and $\frac{1+q}{\zeta'}$. Thus, for the analyticity in $(\nu, \eps)$, it remains to verify that the $\nu$-dependent Fourier multiplier $ S_\nu $ is analytic in $\nu\in (-\frac{1}{\sqrt2},\frac{1}{\sqrt2})$.  To this end, we use the series expansion
\[
    \label{eq:power_series_sqrt_1+x}
    (1+X)^{1/2}=\sum_{j=0}^\infty a_jX^j, \quad |X|<1,
\]
to have
\begin{equation}
    \label{eq:expansion_sqrtOp_temp}
    \begin{aligned}
        S_\nu&=\nu\Pi_0+|D+\nu^2|^{1/2}\Pi_0^\perp=\nu\Pi_0+(1+\frac{\nu^2}{D})^{1/2}|D|^{1/2} \Pi_0^\perp\\
    &=\nu\Pi_0+\Big(\sum_{j=0}^\infty a_j\nu^{2j}D^{-j}\Big)|D|^{1/2} \Pi_0^\perp,\quad \nu\in (-\frac{1}{\sqrt2},\frac{1}{\sqrt2}).
    \end{aligned}
\end{equation}
Here, the series in \eqref{eq:expansion_sqrtOp_temp} converges in $\cL(H^{s+\mez}(\T),  H^s(\T))$ for all $s\in \Rr$ because $D^{-j} |D|^{1/2} \Pi_0^\perp\in \cL(H^{s+\mez}(\T),  H^s(\T))$ for all $j\ge 0$.  
    \end{proof}
%%%%%% % %%%%%%%
\section{Kato's perturbed basis}
Starting from this section, we always consider $\gamma \ne 0$. 
\subsection{Eigenvalues of $\L_{0,\eps}^+$}\label{sec:L+0}
In this subsection we describe eigenvalues near $0$ of $\L_{0,\eps}^+$. When $\eps=0$, we have the Fourier multiplier 
\begin{equation}
    \label{eq:L00+_op}
    \L_{0,0}^+=\begin{bmatrix}
    i\sign^+(D)\big(c_0|D|-\frac{\gamma}{2}\big) & 1\\
    -|D|-\frac{\gamma^2}{4}& i\sign^+(D)\big(c_0|D|-\frac{\gamma}{2}\big)
\end{bmatrix}.
\end{equation}
The characteristic equation is 
\begin{equation}
\label{eq:lambda}
    \begin{aligned}
    0&=\det(\ld I- \L_{0,0}^+)\\
    &= \lambda^2+i\lambda\bigg(-2c_0k+\sign^+(k)(c_0-\frac{1}{c_0})\bigg)-c_0^2(k^2-|k|),
\end{aligned}
\end{equation}
where we substituted $\gamma=c_0-\frac{1}{c_0}$. The spectrum of $\L_{0,0}^+$ consists of the purely imaginary eigenvalues 
\begin{equation}
\label{eq:omega_k_pm}
\begin{aligned}
    \ld=i\omega_k^\pm&=i\left\{\sign^+(k)\big(c_0|k|-\frac{\gamma}{2}\big)\pm\sqrt{|k|+\frac{\gamma^2}{4}}\right\}.
    \end{aligned}
\end{equation}
We have 
\begin{equation}
    \label{eq:values_of_omega_k}
    \begin{aligned}
      &  \omega_{0}^{\sign(\gamma)}=0,\quad \omega_{0}^{-\sign(\gamma)}=-\gamma\ne 0,\\
      & \omega_{1}^+=c_0+\frac{1}{c_0},\quad  \omega_{1}^-=0,\\ 
    & \omega_{-1}^+=0, \quad \omega_{-1}^-=-(c_0+\frac{1}{c_0}).
    \end{aligned}
\end{equation}
On the other hand, arguing as in the proof of Lemma \ref{lemm:0ev}, we find that any root of \eqref{eq:lambda} with $|k|\ge 2$  is bounded below by 
\[
|\lambda|\ge \min\left\{1,\frac{c_0^2}{1+3c_0+\frac{1}{c_0}}\right\}.
\]
In conjunction with \eqref{eq:values_of_omega_k}, this implies that $0= \omega_{0}^{\sign(\gamma)}=\omega_{1}^-=\omega_{-1}^+ $ is an isolated eigenvalue of $\L_{0,0}^+$ with algebraic multiplicity $3$.  By continuity, for small $\eps$, the spectrum of  $\L_{0,\eps}^+$  is separated into two parts, where the part near $0$ has a total dimension of  generalized eigenspaces equals  $3$. We will find three independent (generalized) eigenvectors of  $\L_{0,\eps}^+$ from the four (generalized) eigenvectors of $\Ls_\eps$, given in Theorem \ref{thm:eigenvectors_R}.  To this end, we recall that for {\it nonzero} $\mu$ as in \eqref{choice:mu}, $\L_{\nu, \eps}$ (and hence $\L_{\nu, \eps}^+$) is obtained from $\Ls_\eps$ through the transformations  \eqref{eq:v1_l1_temp}-\eqref{eq:v2_l2_temp}, \eqref{eq:ltoW}, and \eqref{eq:QtoU}. These transformations are not well-defined when $\mu=0$. Nevertheless, by restricting the domain, we can directly verify the relation between $\Ls_\eps $ and $\L_{0,\eps}^+$ in the next lemma. 
    \begin{lemm}\label{prop:convert_RU}
 We define       
  \begin{equation} \Pi_{01}\begin{bmatrix}
            r_1\\
            r_2
        \end{bmatrix}=\Pi_0r_1
    \end{equation} 
    and 
    \begin{equation}
\label{def:B0}
    B_0=\begin{bmatrix}
        |D|^{-\mez}\zeta_*&0\\
        -\frac{\gamma}{2}|D|^\mez \pinv \zeta_* & |D|^\mez \zeta_\sharp 
    \end{bmatrix}.
\end{equation}
For any $R\in (H^1(\T))^2$, if $\Pi_{01}R=0$, then $B_0R$ is well-defined  and 
\begin{equation}
    \label{RtoU}
    B_0\Ls_\eps R=\L_{0,\eps}B_0R=\L^+_{0,\eps}B_0R.
\end{equation}
\end{lemm}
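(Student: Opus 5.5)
The plan is to repeat, with $L=2\pi$ and $\mu=0$, the computation that led from \eqref{eq:v1}--\eqref{eq:v2} to \eqref{sys:W} and then to $\L_{\nu,\eps}$, this time tracking every zero-mode term; that is exactly what fails when $\mu=0$. The guiding observation is that every outer operator in $B_0$ that could see a constant is $|D|^{\pm\mez}$ or $\pinv$, each restricted to mean-free functions, and that $|D|^\mez$ annihilates constants.

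\emph{Well-definedness.} Let $R=[r_1,r_2]^T$ with $\Pi_0 r_1=0$. By \eqref{Pi0zeta:1} we have $\Pi_0\zeta_*r_1=\Pi_0 r_1=0$. Hence $|D|^{-\mez}\zeta_*r_1$ and $\pinv\zeta_*r_1$ are defined, and both are mean free. Every other entry of $B_0$ is an everywhere-defined operator. I also note that the first component of $\Ls_\eps R$ is $\p_x(\cdot)+G(\eta^*)r_2$, which has zero mean, so $B_0\Ls_\eps R$ is defined as well.

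\emph{First equality.} Set $w_1=\zeta_*r_1$ and $w_2=\zeta_\sharp r_2-\frac\gamma2\pinv w_1$, in analogy with \eqref{eq:vtoW}. Applying \eqref{identity:zeta} with $L=2\pi$ gives
\[
\zeta_*(\Ls_\eps R)_1=\p_x(pw_1)+|D|\big(w_2+\tfrac\gamma2\pinv w_1\big),
\]
together with
\[
\zeta_\sharp(\Ls_\eps R)_2=-\tfrac{1+q}{\zeta'}w_1+p\p_x(\zeta_\sharp r_2)+\gamma\mathcal H(\zeta_\sharp r_2)+\gamma\Pi_0(V^*r_1)-\gamma\Pi_0\zeta_\sharp^{-1}\mathcal H(\zeta_\sharp r_2).
\]
Here the constants no longer vanish, because Lemma \ref{lemma: STL} is unavailable. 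I then form the second component of $B_0\Ls_\eps R$, namely
\[
|D|^\mez\big(\zeta_\sharp(\Ls_\eps R)_2-\tfrac\gamma2\pinv\zeta_*(\Ls_\eps R)_1\big),
\]
and redo the algebra that produced \eqref{sys:W}. Whenever $\pinv\p_x=\Pi_0^\perp$ is used, the discarded terms are constants. All of these constants, including the two explicit $\Pi_0$ terms above, sit inside the outer $|D|^\mez$ and are therefore killed. This yields
\[
(B_0\Ls_\eps R)_2=|D|^\mez\Big[\big(-\tfrac{1+q}{\zeta'}+\tfrac{\gamma^2}{4}\pinv\mathcal H\big)w_1+\big(p\p_x+\tfrac\gamma2\mathcal H\big)w_2\Big].
\]
In the same way, $(B_0\Ls_\eps R)_1=|D|^{-\mez}\big[(\p_x\circ p+\frac\gamma2\mathcal H)w_1+|D|w_2\big]$; no zero-mode issue arises here, since the bracket is mean free. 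It remains to write $w_1=|D|^\mez u_1$ and $w_2=|D|^{-\mez}u_2$, where $B_0R=[u_1,u_2]^T$ and both $u_j$ are mean free. I then conjugate exactly as in the passage from $\Ls^\sharp_{\mu,\eps}$ to $\L_{\nu,\eps}$ at $\mu=0$, using $\p_x=i\sign(D)|D|$, $\mathcal H=-i\sign(D)$, and $|D|^\mez\pinv\mathcal H|D|^\mez=-\Pi_0^\perp$. This reproduces the entries of $\L_{0,\eps}$ (with $\sign$) acting on $[u_1,u_2]^T$. The $\Pi_0^\perp$ is harmless because $\Pi_0u_1=0$.

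\emph{Second equality.} $\L^+_{0,\eps}-\L_{0,\eps}$ involves only $\sign^+(D)-\sign(D)=\Pi_0$, and $S_0=|D|^\mez$. In the $(1,1)$ entry the difference is $i\Pi_0(|D|^\mez p|D|^\mez-\frac\gamma2)u_1=-\frac{i\gamma}2\Pi_0u_1=0$. In the $(2,2)$ entry it is $(|D|^\mez p|D|^\mez-\frac\gamma2)i\Pi_0u_2=0$. Both vanish because $u_1$ and $u_2$ are mean free, each being the image of $|D|^{\pm\mez}$.

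The main obstacle is the bookkeeping in the second-row computation. One must confirm that every constant produced by $\pinv\p_x=\Pi_0^\perp$ and by the Hilbert transform terms lands inside the outer $|D|^\mez$. One must also check that the $\frac{\gamma^2}4$ term produces exactly $-\frac{\gamma^2}4$, which requires $\Pi_0w_1=0$.
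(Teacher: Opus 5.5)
Your proposal is correct and is essentially the paper's argument. It is a direct computation in which the constants produced by $\pinv\p_x=\Pi_0^\perp$ and the two $\Pi_0$ terms in \eqref{identity:zeta} are killed by the outer $|D|^{\mez}$, and the $\sign^+$ versus $\sign$ discrepancy disappears because both components of $B_0R$ are mean free. You route it through $(w_1,w_2)$ and the $\mu=0$ conjugation by $A$, whereas the paper expands $B_0\Ls_\eps R$ and $\L_{0,\eps}B_0R$ directly in terms of $r_1,r_2$.

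One small slip: $w_2=|D|^{-\mez}u_2$ holds only modulo the constant $\Pi_0 w_2$, which is generally nonzero (for $R=R_1$ one has $w_2=1$ but $u_2=0$). This is harmless, since $w_2$ enters only through $\p_x$, $|D|$ and $\mathcal H$, all of which annihilate constants.
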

\begin{proof}
Assume $\Pi_{01}R=0$. Then \eqref{Pi0zeta:1} implies $\Pi_0\zeta_*r_1=0$, and hence $B_0R$ is well-defined. Clearly  $ \Pi_{01}B_0R=\Pi_0   |D|^{-\mez}\zeta_*r_1=0$. 

Using the identity \eqref{identity:zeta}, we compute 
\begin{align*}
    B_0\Ls_\eps R&=\vet{|D|^{-\mez }\Big(\p_x(p\zeta_* r_1)+|D|\zeta_\sharp r_2\Big)}
    {-\frac{\gamma}{2}|D|^\mez \pinv\Big(\p_x(p\zeta_* r_1)+|D|\zeta_\sharp r_2\Big)+|D|^\mez \Big(-\frac{1+q}{\zeta'}\zeta_*r_1+p\p_x(\zeta_\sharp r_2)+\gamma\mathcal{H}(\zeta_\sharp r_2)\Big)}\\
    &=\vet{i\sign(D)|D|^\mez (p\zeta_* r_1)+|D|^\mez \zeta_\sharp r_2}{-\frac{\gamma}{2}|D|^\mez (p\zeta_* r_1)-|D|^\mez \circ \frac{1+q}{\zeta'}\zeta_*r_1+|D|^\mez \circ p\p_x(\zeta_\sharp r_2)-\frac{\gamma}{2}i|D|^\mez \sign(D)\zeta_\sharp r_2},
\end{align*}
where the terms $\Pi_0(V^* f_1)$ and $\Pi_0\zeta_\sharp^{-1}\mathcal{H}_L(\zeta_\sharp f_2)$ are constant functions and become $0$ after $|D|^\mez$. On the other hand, $ \L_{0,\eps}B_0 R$ equals
\begin{align}\label{LB0R}
   & \begin{bmatrix}
i \sign(D)\left( |D|^{\mez} \circ p |D|^{\mez} - \frac{\gamma}{2} \right) & 1 \\ \nonumber
- |D|^{\mez} \circ \frac{1+q}{\zeta'} |D|^{\mez} - \frac{\gamma^2}{4} & \left( |D|^{\mez} \circ p |D|^{\mez} - \frac{\gamma}{2} \right) i \sign(D)
\end{bmatrix}
\vet{|D|^{-\mez}\zeta_* r_1}{-\frac{\gamma}{2}|D|^\mez \pinv \zeta_* r_1+|D|^\mez \zeta_\sharp r_2}\\ \nonumber
&=\vet{i\sign(D)|D|^\mez (p\zeta_*r_1)-\frac{\gamma}{2}i\sign(D)|D|^{-\mez}\zeta_* r_1-\frac{\gamma}{2}|D|^\mez \pinv \zeta_* r_1+|D|^\mez \zeta_\sharp r_2}{-|D|^\mez \circ \frac{1+q}{\zeta'}\zeta_*r_1-\frac{\gamma^2}{4}|D|^{-\mez}\zeta_* r_1-\frac{\gamma}{2}|D|^\mez \circ p\zeta_* r_1+\frac{\gamma^2}{4}|D|^{-\mez}\zeta_* r_1+|D|^\mez \circ p\p_x\zeta_\sharp r_2-\frac{\gamma}{2}i\sign(D)|D|^\mez \zeta_\sharp r_2}\\
&=\vet{i\sign(D)|D|^\mez (p\zeta_*r_1)+|D|^\mez \zeta_\sharp r_2}{-|D|^\mez \circ \frac{1+q}{\zeta'}\zeta_*r_1-\frac{\gamma}{2}|D|^\mez \circ p\zeta_* r_1+|D|^\mez \circ p\p_x\zeta_\sharp r_2-\frac{\gamma}{2}i\sign(D)|D|^\mez \zeta_\sharp r_2}.
\end{align}
Therefore, $B_0\Ls_\eps R= \L_{0,\eps}B_0 R$.  $\L_{0,\eps}^+B_0R$ is given by \eqref{LB0R} with $\sign^+(D)$ in place of $\sign(D)$. Since both operators are composed with $|D|^\mez$ to the right, we deduce  $\L_{0,\eps}B_0R=\L_{0,\eps}^+B_0R$. 
    \end{proof} 
    From Lemma \ref{prop:convert_RU} and Theorem \ref{thm:eigenvectors_R}, we obtain
\begin{theo}
\label{thm:eigenvectors_U}
Let  $\gamma\ne 0$. There exists $\eps_0=\eps_0(\gamma)>0$ such that for all $0<|\eps|<\eps_0$,   $0$ is an eigenvalue of $\L_{0,\eps}^+$ with algebraic multiplicity 3 and geometric multiplicity 2. 
\end{theo}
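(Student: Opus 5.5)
Since the total algebraic multiplicity of the spectrum of $\L^+_{0,\eps}$ near $0$ is $3$ (Section \ref{sec:L+0}), it suffices to exhibit three linearly independent vectors forming a Jordan chain of the form: two eigenvectors and one generalized eigenvector of rank two. We then need only show that the geometric multiplicity is not $3$.

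We transport the vectors of Theorem \ref{thm:eigenvectors_R} through the map $B_0$ of Lemma \ref{prop:convert_RU}. That lemma requires $\Pi_{01}R=0$, i.e.\ a mean-zero first component. This holds for $R_1$ (first component $0$) and $R_2$ (first component $\p_x\eta^*$). For $R_3$ the first component $\p_a\eta$ has mean $O(\eps)$ in general, and $R_4$ has first component $\p_P\eta=1+O(\eps)$. We therefore use instead the combination
\[
\wt R:=R_3-(\Pi_0\p_a\eta)\,R_4 ,
\]
which satisfies $\Pi_{01}\wt R=0$.

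Next we compute $B_0$ on these vectors. We have $B_0R_1=[0,\,|D|^{1/2}1]^T=0$, so $R_1$ is lost; this is expected, because the $\mu\to0$ limit kills one mode. Set
\[
U_2:=B_0R_2,\qquad U_3:=B_0\wt R .
\]
By \eqref{eq:thmR3} and \eqref{eq:thmR4},
\[
\Ls_\eps\wt R=-\p_ac\,R_2-(\Pi_0\p_a\eta)(\gamma R_2-R_1).
\]
Applying \eqref{RtoU} and using $B_0R_1=0$ gives
\[
\L^+_{0,\eps}U_3=\kappa\, U_2,\qquad \kappa=-\p_ac-\gamma\,\Pi_0\p_a\eta ,
\]
and $\L^+_{0,\eps}U_2=0$. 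Linear independence of $U_2$ and $U_3$ follows for $\eps\neq0$ small from the leading expansions. The operator $B_0$ applied to $[-\eps\sin x,\eps c_0\cos x]$ and to $[\cos x,c_0\sin x]$ gives independent vectors concentrated on the modes $\pm1$.

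A third vector is needed that $B_0$ cannot see. At $\eps=0$ the kernel of $\L^+_{0,0}$ contains the zero-mode eigenvector for $\omega_0^{\sign(\gamma)}=0$. From \eqref{eq:L00+_op} on constants, with $\sign^+(0)=1$,
\[
\L^+_{0,0}\begin{bmatrix}a\\ b\end{bmatrix}=\begin{bmatrix}-\tfrac{i\gamma}{2}a+b\\[2pt] -\tfrac{\gamma^2}{4}a-\tfrac{i\gamma}{2}b\end{bmatrix},
\]
so $[1,\,i\gamma/2]^T$ is in the kernel. For $\eps\neq0$ we seek an eigenvector of $\L^+_{0,\eps}$ directly. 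Note that $S_0=|D|^{1/2}$ annihilates constants and its range is mean-zero. Hence for $U_1=[a,b]^T$ constant, $\L^+_{0,\eps}U_1$ equals the $\eps=0$ expression above exactly, since the variable coefficients $p$ and $(1+q)/\zeta'$ only appear sandwiched between $S_0$ factors. Therefore
\[
U_1:=[1,\,i\gamma/2]^T
\]
is an exact eigenvector of $\L^+_{0,\eps}$ for all $\eps$. It is independent of $U_2,U_3$ because the latter have zero mean in the first component.

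This yields algebraic multiplicity $3$, with the chain $U_3\mapsto\kappa U_2$.

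The main obstacle is showing that the geometric multiplicity is exactly $2$, i.e.\ that $\kappa\neq0$ for $0<|\eps|<\eps_0$. Using \eqref{expand:Stokes_without_g}, we get $\p_ac=2\eps c_2+O(\eps^2)$ with
\[
c_2=\frac{-4c_0^8+11c_0^6-7c_0^4+3c_0^2+1}{8c_0^5},
\]
and $\Pi_0\p_a\eta=\eps(c_0^2-1)+O(\eps^2)$. Hence
\[
\kappa=-\eps\Big(2c_2+\gamma(c_0^2-1)\Big)+O(\eps^2).
\]
The plan is to simplify $2c_2+(c_0-c_0^{-1})(c_0^2-1)$ to a rational function of $c_0$ and check that it never vanishes for $c_0>0$, $c_0\neq1$ (the case $\gamma\ne0$). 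I expect it to reduce to something like $\frac{(c_0^2+1)^k}{4c_0^5}$ times a positive factor; this algebra, and possibly the need for a further check at $\eps^2$ order if it degenerates, is the risky step.

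Finally, if the geometric multiplicity were $3$, the restriction of $\L^+_{0,\eps}$ to the $3$-dimensional generalized eigenspace would be zero, contradicting $\L^+_{0,\eps}U_3=\kappa U_2\neq0$.
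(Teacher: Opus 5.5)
This is essentially the paper's proof: the same vectors $B_0R_2$ and $B_0\big(R_3-(\Pi_{01}R_3)R_4\big)$, the same constant eigenvector $[1,\frac{\gamma}{2}i]^T$, and the same chain coefficient $\kappa=-\p_a c-\gamma\,\Pi_{01}R_3$. The algebra you left open closes at first order. One has $2c_2+\gamma(c_0^2-1)=\frac{3c_0^6-3c_0^4+3c_0^2+1}{4c_0^5}$, and the numerator equals $3c_0^2\big[(c_0^2-\frac12)^2+\frac34\big]+1>0$, so $\kappa\neq0$ for small $\eps\neq0$; no $\eps^2$-order check is needed, and no $(c_0^2+1)^k$ factor appears.
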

\begin{proof}
    From the formulas of the (generalized) eigenvectors of $\Ls_\eps $ in Theorem \ref{thm:eigenvectors_R}, we have
    \begin{align*}
        \Pi_{01}R_1=\Pi_{01}R_2=0,\quad \Pi_{01}R_3=\Pi_0\p_a\eta\vert_{(a, P)=(\eps, 0)},\quad  \Pi_{01}R_4=1,
    \end{align*}
Consequently, $\Pi_{01}\big(R_3-(\Pi_{01}R_3)R_4\big)=0$, and hence we can define 
  \[
        \Ush_2=B_0R_2,\quad \Ush_3=B_0\big(R_3-(\Pi_{01}R_3)R_4\big).
        \]
        Since $\Ls_\eps R_2=0$,  \eqref{RtoU} implies that $\L^+_{0,\eps}\Ush_2=0$. 

 Next,  using   \eqref{RtoU}, \eqref{eq:thmR3}, \eqref{eq:thmR4}, and the fact that $B_0R_1=0$,  we get 
    \begin{equation}
        \label{eq:thmUsh3}
        \begin{aligned}
            \L_{0,\eps}^+U_3^\sharp&=
            B_0\Ls_\eps \Big(R_3-(\Pi_{01}R_3)R_4\Big)\\
            &=B_0\Big(-\p_ac\vert_{(a, P)=(\eps, 0)}R_2-\gamma(\Pi_{01}R_3)R_2\Big)\\
            &=\Big(-\p_ac\vert_{(a, P)=(\eps, 0)}-\gamma(\Pi_{01}R_3)\Big)\Ush_2.
        \end{aligned}
    \end{equation}
     From the expansion of $\eta$ in \eqref{expand:Stokes_without_g}, we find 
        \begin{equation}
            \begin{aligned}
                \Pi_{01}R_3
               =\Pi_0\p_a\eta\vert_{(a, P)=(\eps, 0)}
                =2\eps (\frac{c_0^2-1}{2})+\rem 2.
            \end{aligned}
        \end{equation}
        Hence, the coefficient of $\Ush_2$ in \eqref{eq:thmUsh3} is 
        \begin{align*}
            &-\p_ac\vert_{(a, P)=(\eps, 0)}-\gamma(\Pi_{01}R_3)\\
            &=-2\eps (\frac{-4 c_0^8 +11 c_0^6  -7 c_0^4 +3 c_0^2  +1}{8 c_0^5  })-(c_0-\frac{1}{c_0})2\eps (\frac{c_0^2-1}{2})+\rem 2\\
            &=(2\eps)(\frac{-3c_0^6+3c_0^4-3c_0^2-1}{8c_0^5})+\rem 2. 
        \end{align*}
   Since  $-3c_0^6+3c_0^4-3c_0^2-1=-3c_0^2\left[\left(c_0^2-\frac12\right)^2+\frac34\right]-1<0$, the preceding coefficient is nonzero. Thus  $\Ush_3$ is a generalized eigenvector. 
   Finally,  it is readily seen that
        \begin{equation}
            \label{eq:thmUsh1}
            \L_{0,\eps}^+\Ush_1=0,\quad   \Ush_1:=\begin{bmatrix}
            1\\
            \frac{\gamma}{2}i
        \end{bmatrix}
        \end{equation}
      and  $\Ush_1$ is independent of $\Ush_2$ because $\Pi_0\Ush_2=0$. 
\end{proof}
\subsection{Kato's similarity transformation}
From the discussion on $\cL^+_{0, 0}$ in Section \ref{sec:L+0}, the triple eigenvalue $\ld=0$ is spanned by the vectors 
\begin{equation}
\label{basis_k=1_2}
U_{1}=
    \begin{bmatrix}
        ie^{ix}\\
        (c_0-\frac{\gamma}{2})e^{ ix}
    \end{bmatrix},\quad U_2=\begin{bmatrix}
        - ie^{-ix}\\
        (c_0-\frac{\gamma}{2})e^{ - ix}
    \end{bmatrix},\quad   U_3=\begin{bmatrix}
        i\\
        -\frac{\gamma}{2}
    \end{bmatrix}.
\end{equation}
In order to track the eigenvalues of $\cL^+_{\nu, \eps}$ near $0$, we follow \cite{Berti_Full} and use the Kato similarity transformation,
\begin{equation}
\label{def:Kato}
    \cK_{\nu,\eps} := \{1-(P_{\nu,\eps}-P_{0, 0})^2\}^{-1/2} \{P_{\nu,\eps} P_{0, 0} + (1-P_{\nu,\eps})(1-P_{0, 0})\}.
\end{equation}
Here $P_{\nu,\eps}:  (L^2(\T))^2\to (H^1(\T))^2$ is the spectral projection
\begin{equation}
    \label{def:P_spec_proj}
P_{\nu,\eps} = -\frac1{2\pi i} \oint_\Gamma (\L^+_{\nu,\eps} - \lambda)^{-1} d\lambda,
\end{equation}
where $\Gamma$ is a closed circle of sufficiently small radius around $0$ such that $\Gamma\subset \rho(\L^+_{\nu,\eps})$ for  all $(\nu,\eps)$ near $(0,0)$. By Proposition \ref{prop:L+},  $\L_{\nu,\eps}^+$ is analytic in $(\nu,\eps)$ near $(0,0)$, and hence so are   $\cK_{\nu,\eps}$ and $P_{\nu,\eps}$. 

Set 
\begin{equation}
    \cV_{\nu,\eps}:=\operatorname{Ran}(P_{\nu,\eps}),
\end{equation}
From \cite[Lemma 3.1]{Berti_Full}, we have the  following properties:
\begin{align}
    \L_{\nu,\eps}^+:\cV_{\nu,\eps}&\to\cV_{\nu,\eps},\label{prop1_berti_kato}\\
    \sigma(\L^+_{\nu,\eps})\cap\{z\in\mathbb{C} \text{ inside }\Gamma\}&=\sigma(\L^+_{\nu,\eps}|_{\cV_{\nu,\eps}}),\label{prop2_berti_kato}\\
    \cV_{\nu,\eps}&=\cK_{\nu,\eps}\cV_{0,0}.\label{prop3_berti_kato}
\end{align}
These properties allow us to obtain the generalized eigenspace of $\cL^+_{\nu, \eps}$ by applying Kato's transformation to the unperturbed generalized eigenspace 
\[
\cV_{0, 0}=\operatorname{span}\{U_1, U_2,U_3\},
\]
that is,
\begin{equation}
    \cV_{\nu,\eps}=\Span \{U^{\nu,\eps}_{1},U^{\nu,\eps}_2,U^{\nu,\eps}_3\},\quad  U^{\nu,\eps}_m:=\cK_{\nu,\eps}U_m.
\end{equation}
Consequently, the spectrum of $\L^+_{\nu,\eps}$ near origin is that of the $3\times3$ matrix $\Lmat_{\nu,\eps}$ representing $\cL^+_{\nu, \eps}$ on the $\cV_{\nu,\eps}$ basis. We note that the $U^{\nu,\eps}_m$'s are analytic in $(\nu,\eps)$ near $(0,0)$ since $\cK_{\nu, \eps}$ is so. Since  $\cL^+_{\nu, \eps}$ is Hamiltonian and reversible,  $\cK_{\nu,\eps}$ is symplectic and reversible, i.e. $\cK_{\nu,\eps}^*J\cK_{\nu,\eps}=J$ and $\newRev\cK_{\nu,\eps}=\cK_{\nu,\eps}\newRev$, where $\newRev$ is the reversal operator \eqref{eq:def_rev_R}. See \cite[Lemma 3.2 (ii)]{Berti_Full}.  Consequently, arguing as in the proof of \cite[Lemma 3.1]{Huy_transverse}, we obtain the following structural properties of $\Lmat_{\nu,\eps}$: 
\begin{prop}
\label{prop:structure_Lmat}
    (1) The $3\times 3$ matrix that represents the linear operator $\L_{\nu,\eps}^+=J\B_{\nu,\eps}:\cV_{\nu,\eps}\to \cV_{\nu,\eps}$  with respect to the basis $\{U_1^{\nu,\eps},U_{2}^{\nu,\eps},U_3^{\nu,\eps}\}$ (in this order) is 
  \begin{equation}
  \label{def:Lmat}
\Lmat_{\nu,\eps}:=\begin{bmatrix}
\displaystyle \frac{i}{2\pi \beta_1} \left( \mathcal{B}_{\nu,\eps} U_1^{\nu,\eps}, U_1^{\nu,\eps} \right) 
& \displaystyle \frac{i}{2\pi \beta_2} \left( \mathcal{B}_{\nu,\eps} U_1^{\nu,\eps}, U_2^{\nu,\eps} \right) 
& \displaystyle \frac{i}{2\pi \beta_3} \left( \mathcal{B}_{\nu,\eps} U_1^{\nu,\eps}, U_3^{\nu,\eps} \right) \\
\displaystyle \frac{i}{2\pi \beta_1} \left( \mathcal{B}_{\nu,\eps} U_2^{\nu,\eps}, U_1^{\nu,\eps} \right) 
& \displaystyle \frac{i}{2\pi \beta_2} \left( \mathcal{B}_{\nu,\eps} U_2^{\nu,\eps}, U_2^{\nu,\eps} \right) 
& \displaystyle \frac{i}{2\pi \beta_3} \left( \mathcal{B}_{\nu,\eps} U_2^{\nu,\eps}, U_3^{\nu,\eps} \right) \\
\displaystyle \frac{i}{2\pi \beta_1} \left( \mathcal{B}_{\nu,\eps} U_3^{\nu,\eps}, U_1^{\nu,\eps} \right) 
& \displaystyle \frac{i}{2\pi \beta_2} \left( \mathcal{B}_{\nu,\eps} U_3^{\nu,\eps}, U_2^{\nu,\eps} \right) 
& \displaystyle \frac{i}{2\pi \beta_3} \left( \mathcal{B}_{\nu,\eps} U_3^{\nu,\eps}, U_3^{\nu,\eps} \right)
\end{bmatrix},
\end{equation}
where the constants \begin{equation}
    \label{def:beta}
    \beta_1=-(c_0+\frac{1}{c_0}),\quad \beta_2=c_0+\frac{1}{c_0},\quad \beta_3=\gamma
\end{equation}
satisfy 
\begin{equation}
    \label{eq:JUmn_general}
   (JU_m^{\nu,\eps},U_n^{\nu,\eps})= (JU_m,U_n)=2\pi i\delta_{mn}\beta_m.
\end{equation}

(2) The entries of $\Lmat_{\nu,\eps}$ are pure imaginary, and
\begin{equation}\label{eq:symm_Lmat}
    (\Lmat_{\nu,\eps})_{jk}\beta_k=(\Lmat_{\nu,\eps})_{kj}\beta_j, \quad j,k\in\{1,2,3\}.
\end{equation}
\end{prop}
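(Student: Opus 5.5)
We work entirely with the Kato-transformed basis and the two structures already established: $\L^+_{\nu,\eps}=J\B_{\nu,\eps}$ with $\B_{\nu,\eps}$ self-adjoint, and the facts that $\cK_{\nu,\eps}$ is symplectic and commutes with $\newRev$. We first verify \eqref{eq:JUmn_general} at $(\nu,\eps)=(0,0)$ by direct computation from \eqref{basis_k=1_2}. For $U_1$ we have $JU_1=[(c_0-\frac{\gamma}{2})e^{ix},-ie^{ix}]^T$, so
\[
(JU_1,U_1)=2\pi\Big((c_0-\tfrac{\gamma}{2})(-i)+(-i)(c_0-\tfrac{\gamma}{2})\Big)=-2\pi i(2c_0-\gamma).
\]
Since $\gamma=c_0-c_0^{-1}$, we get $2c_0-\gamma=c_0+c_0^{-1}$, which gives $\beta_1$. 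Similarly $\beta_2=c_0+c_0^{-1}$, and for $U_3$,
\[
(JU_3,U_3)=2\pi\big(-\tfrac{\gamma}{2}(-i)+(-i)(-\tfrac{\gamma}{2})\big)=2\pi i\gamma .
\]
The off-diagonal pairings vanish by orthogonality of the Fourier modes $e^{ix},e^{-ix},1$. The identity $(JU_m^{\nu,\eps},U_n^{\nu,\eps})=(\cK^*J\cK U_m,U_n)=(JU_m,U_n)$ then follows from $\cK^*_{\nu,\eps}J\cK_{\nu,\eps}=J$. Since every $\beta_m\neq0$ when $\gamma\ne0$, the basis is $J$-orthogonal and nondegenerate.

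For part (1), write $\L^+_{\nu,\eps}U_k^{\nu,\eps}=\sum_j (\Lmat_{\nu,\eps})_{jk}U_j^{\nu,\eps}$, using the invariance \eqref{prop1_berti_kato}. We pair with $JU_n^{\nu,\eps}$ and use $J^*=-J$, $J^2=-1$:
\[
(\L^+U_k,JU_n)=(J\B U_k,JU_n)=(\B U_k,U_n),
\]
while the right-hand side gives $\sum_j \Lmat_{jk}(U_j,JU_n)=\Lmat_{nk}\,\overline{(JU_n,U_j)}$-type terms. By $J$-orthogonality only $j=n$ survives, leaving $-\overline{2\pi i\beta_n}\,\Lmat_{nk}$ up to the sign and conjugation bookkeeping. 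Solving gives an entry of the form $\frac{i}{2\pi\beta_n}(\B U_k,U_n)$. We then match indices to the convention of \eqref{def:Lmat}, whose $(k,n)$ entry carries $\beta_n$, transposing if the paper writes matrices acting on row coordinates. The main care here is consistent sign and conjugation conventions in the inner product $(f,g)=\int f\cdot\bar g$.

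For part (2), the symmetry \eqref{eq:symm_Lmat} follows from the self-adjointness of $\B$: $(\B U_k,U_j)=\overline{(\B U_j,U_k)}$. Combined with the formula from part (1), this gives $\beta_n\Lmat_{kn}=i(\B U_k,U_n)/2\pi$, and the same relation with roles swapped up to conjugation. Purity of the entries uses reversibility: $\newRev\cK=\cK\newRev$, and one checks directly that $\newRev U_m=U_m$ for each $m$. For instance, $\newRev U_1=[-\overline{ie^{-ix}},(c_0-\frac{\gamma}{2})e^{ix}]^T=[ie^{ix},\dots]^T=U_1$, and similarly for $U_2$ and $U_3$. Hence $\newRev U_m^{\nu,\eps}=U_m^{\nu,\eps}$. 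Since $\newRev$ is antilinear and $\newRev\L^+=-\L^+\newRev$, applying $\newRev$ to the expansion of $\L^+U_k^{\nu,\eps}$ yields $\overline{\Lmat_{jk}}=-\Lmat_{jk}$, so every entry is purely imaginary. Consequently $(\B U_k,U_j)$ is real, and the conjugation disappears, giving exactly $\Lmat_{jk}\beta_k=\Lmat_{kj}\beta_j$.

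The main obstacle is not conceptual but one of conventions. We need to verify $\newRev U_m=U_m$ with the correct signs, especially for $U_3$ with its constant entries: $-\overline{i}=i$ and $\overline{-\gamma/2}=-\gamma/2$, so this checks out. We also need to keep the row/column ordering in \eqref{def:Lmat} consistent with the pairing computation.
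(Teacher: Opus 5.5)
Your argument is correct and is essentially the standard one the paper invokes by citation. Symplecticity and reversibility of $\cK_{\nu,\eps}$ (from Berti--Maspero--Ventura) carry the $J$-pairings and the identity $\newRev U_m=U_m$ over to the perturbed basis; the $J$-orthogonal expansion then gives the entries, reversibility makes them imaginary, and self-adjointness of $\B_{\nu,\eps}$ gives \eqref{eq:symm_Lmat}. The one slip is in the intermediate sign: $(U_n,JU_n)=\overline{(JU_n,U_n)}=-2\pi i\beta_n$, not $-\overline{2\pi i\beta_n}$, and it is this value that produces the $+\frac{i}{2\pi\beta_n}$ you state. The transpose you anticipate is also the right reading, since the paper's convention is $\L^+_{\nu,\eps}U_m^{\nu,\eps}=\sum_k(\Lmat_{\nu,\eps})_{mk}U_k^{\nu,\eps}$.
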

We will use the following consequence of \eqref{eq:JUmn_general}:
\begin{equation}\label{eq:expr_cV_vector_in_basis_by_inner_product|}
      U=-i \sum_{k=1}^3\frac{1}{2\pi\beta_k}(JU,U_k^{\nu,\eps})U_k^{\nu,\eps}\quad\forall U\in \cV_{\nu, \eps}.
  \end{equation}
\subsection{Expansion of the matrix $\Lmat_{\nu,\eps}$}
Since  $\mathcal{B}_{\nu,\eps}$ and  $U_m^{\nu,\eps}$ are analytic in $(\nu, \eps)$ near $(0, 0)$, so is the $3\times 3$ matrix $\Lmat_{\nu,\eps}$ given by \eqref{def:Lmat}. We will expand  $\Lmat_{\nu,\eps}$ in powers of $\eps$ and $\nu$ and tolerate error terms $\errorTol$ (see \eqref{sec:notation}):
\begin{equation}
\label{expansion:Lmat_formal}
    \Lmat_{\nu,\eps}=\sum_{\substack{
       k=0,\ j\le 5 \\
        \text{or } k=1,\ j\le 3 \\
        \text{or } k=2,\ j\le 1
    }}\Lmat^{j,k}\nu^j\eps^k+\errorTol.
\end{equation}
We begin with the following observation.
\begin{lemm}\label{lemm:Lnu0}
    \label{prop:Lmat_entry_0_for_eps_0_nu_odd} If $j$ is odd, then $\Lmat^{j,0}=0$. 
\end{lemm}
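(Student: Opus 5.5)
The plan is to show that for $\eps=0$ the whole construction is an even function of $\nu$. The matrix $\Lmat_{\nu,0}$ will then be even in $\nu$, and the odd Taylor coefficients $\Lmat^{j,0}$ must vanish.

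The first step is to compute $\L^+_{\nu,0}$ explicitly. For $\eps=0$ we have $p=c_0$, $q=0$ and $\zeta'=1$, so $\L^+_{\nu,0}$ is a Fourier multiplier. We evaluate its symbol on each mode $e^{ikx}$.
\begin{itemize}
\item For $k\neq 0$, $S_\nu$ acts as $|k+\nu^2|^{1/2}$. Hence $S_\nu c_0 S_\nu$ acts as $c_0|k+\nu^2|$ and $S_\nu^2$ acts as $|k+\nu^2|$. Both are functions of $\nu^2$ alone.
\item For $k=0$, $S_\nu$ acts as $\nu$. Hence $S_\nu c_0 S_\nu=c_0\nu^2$ and $S_\nu S_\nu=\nu^2$, again even in $\nu$.
\item The factor $\sign^+(D)$ does not depend on $\nu$.
\end{itemize}
Therefore $\L^+_{-\nu,0}=\L^+_{\nu,0}$ as operators $(H^1(\T))^2\to(L^2(\T))^2$. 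The same argument applied to \eqref{def:cB} gives $\B_{-\nu,0}=\B_{\nu,0}$. The key point is that $S_\nu$ is odd only on the zero mode, and it always appears there squared.

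Next I propagate this evenness through Kato's construction. By \eqref{def:P_spec_proj}, the resolvent $(\L^+_{\nu,0}-\lambda)^{-1}$ is even in $\nu$ for $\lambda\in\Gamma$, so $P_{-\nu,0}=P_{\nu,0}$. Since $P_{0,0}$ is fixed, \eqref{def:Kato} gives $\cK_{-\nu,0}=\cK_{\nu,0}$. Consequently $U_m^{-\nu,0}=U_m^{\nu,0}$ for $m=1,2,3$. The constants $\beta_m$ in \eqref{def:beta} do not depend on $\nu$.

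Finally, each entry of \eqref{def:Lmat} at $\eps=0$ is built from $\B_{\nu,0}$, the vectors $U_m^{\nu,0}$ and the constants $\beta_m$. Each entry is therefore an even analytic function of $\nu$ near $0$, so its Taylor expansion in $\nu$ contains only even powers. Comparing with \eqref{expansion:Lmat_formal} at $\eps=0$ yields $\Lmat^{j,0}=0$ for all odd $j$.

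There is no real obstacle. The only point requiring care is the zero mode, where $S_\nu=\nu$ is odd; the argument works because $S_\nu$ occurs there only in the products $S_\nu\, p\, S_\nu$ and $S_\nu\frac{1+q}{\zeta'}S_\nu$. When $\eps=0$, $p$ and $\frac{1+q}{\zeta'}$ are constants and commute with $\Pi_0$, so these products reduce to $\nu^2$ times constants. For $\eps\neq 0$ the multiplications couple the zero mode to nonzero modes, which produces the $\nu\eps$ and $\nu^3\eps$ terms.
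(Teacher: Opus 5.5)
Your proof is correct and follows the same route as the paper. At $\eps=0$ the coefficients $p$ and $\frac{1+q}{\zeta'}$ are constant, so $S_\nu\, p\, S_\nu=c_0|D+\nu^2|$ and $S_\nu\frac{1+q}{\zeta'}S_\nu=|D+\nu^2|$ depend only on $\mu=\nu^2$, and this dependence carries over to $\Lmat_{\nu,0}$. The paper just asserts that analyticity in $\mu$ passes to $\Lmat_{\nu,0}$, whereas you spell out the propagation through the resolvent, $P_{\nu,0}$, $\cK_{\nu,0}$ and the basis $U_m^{\nu,0}$, which is a welcome extra detail.
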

\begin{proof}
We recall that  $S_\nu $ in $\cL^+_{\nu, \eps}$ is not analytic in $\mu=\nu^2$. However,  when $\eps=0$,  we have $p\equiv c_0$ by \eqref{expand:p}, so that    
 \[
 S_\nu \circ (p|_{\eps=0})S_\nu =c_0|D+\nu^2|
 \]
is analytic in small $\mu=\nu^2$. The same argument applies to $S_\nu \circ \frac{1+q}{\zeta'}S_\nu $ since $\frac{1+q}{\zeta'}\vert_{\eps=0}\equiv 1$ by \eqref{expand:q}. Consequently, $\cL^+_{\nu, 0}$ is analytic in small $\mu=\nu^2$, and hence so is $ \Lmat_{\nu, 0}$. Therefore, in the series expansion of $ \Lmat_{\nu, 0}$, only even powers of $\nu$ appear. 
\end{proof}
By virtue of Lemma \ref{lemm:Lnu0}, in order to obtain \eqref{expansion:Lmat_formal}, we still need to carry out explicit calculations to obtain the coefficients $\Lmat^{j,k}$, where 
\[
k=0,~j\in\{ 2, 4\},~~\text{or}~~k=1,~j\in\{0, 1, 2, 3\},~~\text{or}~~ k=2,~j\in\{0, 1\}.
\]
To this end, it is convenient to introduce the notation
\begin{equation}
    \label{def:Delta} \Delta\L_{\nu,\eps}^+=\L_{\nu,\eps}^+-\L_{0,0}^+,\quad \Delta P_{\nu,\eps}=P_{\nu,\eps}-P_{0,0}.
\end{equation}
Recalling from \eqref{expand:p} that  $p(\eps)=c_0+O(\eps)$, we have 
\begin{equation}
\label{eq:sqrtOp_p_sqrtOp}
    \begin{aligned}
    S_\nu \circ pS_\nu =c_0|D+\nu^2|+O(\eps)=c_0|D|+O(\eps,\nu^2)\quad\text{in}~\cL(H^1, L^2). 
\end{aligned}
\end{equation}
Similarly, \eqref{expand:q} implies that 
\[
S_\nu \circ \frac{1+q}{\zeta'}S_\nu =|D|+O(\eps,\nu^2)\quad\text{in}~\cL(H^1, L^2),
\]
 and hence 
 \begin{equation}
    \label{error:deltaL}
    \Delta\L_{\nu,\eps}^+=\L^+_{\nu,\eps}-\L_{0,0}^+=O(\eps,\nu^2)\quad\text{in}~\cL((H^1)^2, (L^2)^2). 
\end{equation}
To expand the inverse operator in the spectral projection $P_{\nu, \eps}$ \eqref{def:P_spec_proj}, we use the Neumann series formula 
\begin{equation}
    \label{eq:Neumann_series}
    (\L^+_{\nu,\eps}-\lambda)^{-1}-(\L_{0,0}^+-\lambda)^{-1}=
    \sum_{k=1}^\infty (-1)^k(\L_{0,0}^+-\lambda)^{-1}\big(\Delta\L_{\nu,\eps}^+(\L_{0,0}^+-\lambda)^{-1}\big)^k
\end{equation}
and notice that 
\begin{equation}\label{error:Neumann}
     (-1)^k(\L_{0,0}^+-\lambda)^{-1}\big(\Delta\L_{\nu,\eps}^+(\L_{0,0}^+-\lambda)^{-1}\big)^k=\errorTol\quad \text{ for }k\ge 3.
\end{equation}
Thus we obtain 
\[
    \label{cutoff:Neumann}
    (\L^+_{\nu,\eps}-\lambda)^{-1}-(\L_{0,0}^+-\lambda)^{-1}=\cT_{\nu, \eps}(\ld)+\errorTol,
\]
where
\bq\label{def:cT}
\cT_{\nu, \eps}(\ld):=-(\L_{0,0}^+-\lambda)^{-1}\Delta\L_{\nu,\eps}^+ (\L_{0,0}^+-\lambda)^{-1}\\+(\L_{0,0}^+-\lambda)^{-1}\Delta\L_{\nu,\eps}^+ (\L_{0,0}^+-\lambda)^{-1}\Delta\L_{\nu,\eps}^+ (\L_{0,0}^+-\lambda)^{-1}.
\eq
Applying the residue theorem to \eqref{def:P_spec_proj}, we find  
\begin{equation}\label{cutoff:P}
    \begin{aligned}
        (\Delta P_{\nu,\eps})U&=-\res\bigg( (\L^+_{\nu,\eps}-\lambda)^{-1}U-(\L_{0,0}^+-\lambda)^{-1}U\bigg)\\
        &=-[\lambda^{-1}]\cT_{\nu, \eps}(\ld)+\errorTol,
    \end{aligned}
\end{equation}
where $[\lambda^{-1}]\cT_{\nu, \eps}(\ld)$ denotes the coefficient of $\ld^{-1}$ in the Laurent expansion of $\cT_{\nu, \eps}(\ld)$. We recall from  \eqref{eq:L00+_op} that
\begin{equation}
    \label{eq:L00+_op_ab}
    \L_{0,0}^+=
    \begin{bmatrix}
        a(D) &1\\
        b(D)& a(D)
    \end{bmatrix},\quad a(k):= i\sign^+(k)\big(c_0|k|-\frac{\gamma}{2}\big),\quad b(k):=-|k|-\frac{\gamma^2}{4}. 
\end{equation}
For each $k\in \Zz$, since $b(k)\ne 0$, the determinant $(a(k)-\lambda)^2-b(k)$ cannot have double root. Consequently, the smallest power of $\ld$ in the Laurent  expansion of 
\begin{equation}\label{eq:Linv_op_ab}
   (\L_{0,0}^+-\lambda)^{-1}=
    \begin{bmatrix}
        a(D)-\lambda &1\\
        b(D)& a(D)-\lambda
    \end{bmatrix}^{-1}
    =\frac{1}{(a(D)-\lambda)^2-b(D)}\begin{bmatrix}
        a(D)-\lambda &-1\\
       - b(D)& a(D)-\lambda
    \end{bmatrix}
\end{equation}
is  $\lambda^{-1}$.  Since there are at most three  copies of $(\L_{0,0}^+-\lambda)^{-1}$  in $\cT_{\nu, \eps}(\ld)$, to obtain the coefficient of $\ld^{-1}$ in $\cT_{\nu, \eps}(\ld)$, it suffices to calculate the $\ld^j$ coefficients of $(\L_{0,0}^+-\lambda)^{-1}$ for  $j\in\{-1, 0, 1\}$. 

For $j\in \{1, 2, 3\}$, we have $P_{0,0}U_j=U_j$, so that 
\bq
  \cK_{\nu,\eps}U_j=(1-(\Delta P_{\nu,\eps})^2)^{-1/2}P_{\nu,\eps}U_j=(1-(\Delta P_{\nu,\eps})^2)^{-1/2}(1+\Delta P_{\nu,\eps})U_j.
  \eq
It follows from \eqref{error:deltaL}, \eqref{def:cT}, and \eqref{cutoff:P} that $\Delta P_{\nu,\eps}=O(\eps,\nu^2)$. Consequently,
\begin{equation}
\label{expansion:cK}
    \begin{aligned}
    U_j^{\nu,\eps}&=\cK_{\nu,\eps}U_j=(1-(\Delta P_{\nu,\eps})^2)^{-1/2}(1+\Delta P_{\nu,\eps})U_j\\
    &=(1+\Delta P_{\nu,\eps}+\frac{1}{2}(\Delta P_{\nu,\eps})^2)U_j+\errorTol.
\end{aligned}
\end{equation}
Using \eqref{expansion:cK} and the aforementioned  observation about calculating $\Delta P_{\nu,\eps}$, we use Mathematica's symbolic computation to obtain the coefficients $\Lmat^{j, k}$ in \eqref{expansion:Lmat_formal}. We refer to the companion  Mathematica file \verb|Kato_basis_and_matrix_L.nb| for details. The result  is: 
\bq
\label{eq:Lmat_expand_in_nu_eps}
\begin{aligned}
\Lmat_{\nu,\eps}&=\nu^2\left[
\begin{array}{ccc}
 \frac{i c_0^3}{c_0^2+1} & 0 & 0 \\
 0 & \frac{i c_0^3}{c_0^2+1} & 0 \\
 0 & 0 & \frac{i c_0^3}{c_0^2-1} \\
\end{array}
\right]+\nu^4\left[
\begin{array}{ccc}
 \frac{i c_0^3}{\left(c_0^2+1\right)^3} & 0 & 0 \\
 0 & -\frac{i c_0^3}{\left(c_0^2+1\right)^3} & 0 \\
 0 & 0 & -\frac{i c_0^3}{\left(c_0^2-1\right)^3} \\
\end{array}
\right]\\
&\qquad+\nu\eps\left[
\begin{array}{ccc}
 0 & 0 & -\frac{i \left(c_0^2+1\right)}{2 c_0} \\
 0 & 0 & \frac{i \left(c_0^2+1\right)}{2 c_0} \\
 \frac{i \left(c_0^2-1\right)}{2 c_0} & \frac{i \left(c_0^2-1\right)}{2 c_0} & 0 \\
\end{array}
\right]\\
&\qquad+\nu^3\eps\left[
\begin{array}{ccc}
 0 & 0 & -\frac{i \left(c_0^6-3 c_0^4-c_0^2-1\right)}{4 c_0 \left(c_0^4-1\right)} \\
 0 & 0 & -\frac{i \left(c_0^6-3 c_0^4-c_0^2-1\right)}{4 c_0 \left(c_0^4-1\right)} \\
 \frac{i \left(c_0^6-3 c_0^4-c_0^2-1\right)}{4 c_0 \left(c_0^2+1\right)^2} & -\frac{i \left(c_0^6-3 c_0^4-c_0^2-1\right)}{4 c_0 \left(c_0^2+1\right)^2} & 0 \\
\end{array}
\right]\\
&\qquad+\eps^2\left[
\begin{array}{ccc}
 -\frac{i \left(3 \left(c_0^6-c_0^4+c_0^2\right)+1\right)}{8 c_0^5} & -\frac{i \left(3 \left(c_0^6-c_0^4+c_0^2\right)+1\right)}{8 c_0^5} & 0 \\
 \frac{i \left(3 \left(c_0^6-c_0^4+c_0^2\right)+1\right)}{8 c_0^5} & \frac{i \left(3 \left(c_0^6-c_0^4+c_0^2\right)+1\right)}{8 c_0^5} & 0 \\
 0 & 0 & 0 \\
\end{array}
\right]
+\errorTol.
\end{aligned}
\eq
We note in particular that 
\bq\label{eq:Lmat_expand_in_nu_eps:2}
\Lmat^{0, 1}=\Lmat^{2, 1}=\Lmat^{1, 2}=0,\quad \Lmat^{2j+1, 0}=0\quad\forall j\ge 0.
\eq 
\section{Analysis of $\Lmat_{\nu,\eps}$}
 The proof of Theorem \ref{thm:main} requires much finer analysis of $\Lmat_{\nu,\eps}$ than the finite-order expansion \eqref{eq:Lmat_expand_in_nu_eps}. In this section, we shall exploit the structure of $\cL^+_{\nu, \eps}$ and parity of variable coefficients of $\cL^+_{\nu, \eps}$ and eigenvectors to obtain {\it exact} relations and vanishing of certain entries of $\Lmat_{\nu,\eps}$. 
\subsection{Analyticity in $\mu$}
The operator $\cL^+_{\nu, \eps}$ \eqref{def:Lplus_new} fails to be analytic in $\mu=\nu^2$ precisely because of the zero frequency, i.e. $(|k|+\mu)^\mez$ when $k=0$. The goal of this subsection is to identify all entries of $\Lmat_{\nu,\eps}$ that are analytic in $\mu$. To separate the zero frequency, we decompose 
\bq
L^2(\T)=\C\oplus \rL^2(\T),\quad \rL^2(\T):=\{f\in L^2(\T):\Pi_0f=0\}.
\eq
We also denote 
\bq
\mathring{H}^m(\T):=H^m(\T)\cap \rL^2(\T).
\eq
In order to investigate the analyticity in $\mu$, we introduce 
\begin{defi}
    (1) If $X$ is a Banach space, we denote the space of $X$-valued analytic functions near $(\mu,\eps)=(0,0)$ by $\sC_{X}$. When $X=\Cc$, we simply write $\sC_{\Cc}=\sC$.  On the other hand, if $X$ and $Y$ are Banach spaces, we write  $ \sC_{\cL(X,Y)}=  \sC_{X\to Y}$. 
    
(2) Let  $X$, $Y$, $H$ be Hilbert spaces such that $X, Y\subset H$. Suppose that $\Pi : H\to H$ is a projection such that $X$ and $Y$ are invariant under $\Pi $. We denote $\Pi ^\perp=\I-\Pi $ and define
\[
\label{def:sE}
\begin{aligned}
    \sE_{X\to Y}&:=\Pi \sC_{X\to Y}\Pi +\Pi ^\perp\sC_{X\to Y}\Pi ^\perp+\nu\big(\Pi ^\perp\sC_{X\to Y}\Pi +\Pi \sC_{X\to Y}\Pi ^\perp\big)\\
&:=
    \Big\{\Pi F_1\Pi +\Pi ^\perp F_2\Pi ^\perp+\nu\big(\Pi ^\perp F_3\Pi +\Pi  F_4\Pi ^\perp\big):F_j\in\sC_{X\to Y},1\le j\le 4\Big\}.
     \end{aligned}
\]
\end{defi}
We gather properties of the space $\sE_{X\to Y}$ in the next lemma. 
\begin{lemm}
\label{lemma:sE}
    (1) An operator $F(\nu,\eps)\in \cL(X, Y)$ is in $\sE_{X\to Y}$ if and only if
    \begin{equation}
    \label{def:alter_def_of_sE}
        \Pi  F \Pi ,
        \Pi ^\perp F \Pi ^\perp \in \sC_{X\to Y}\quad \text{and }\quad
        \Pi ^\perp F \Pi ,
        \Pi  F \Pi ^\perp \in \nu \sC_{X\to Y}.
    \end{equation}

    (2) $\sE_{X\to Y}$ is a linear subspace of $L(X,Y)\{\nu,\eps\}$. If   $\{F^{(n)}\}_{n=1}^\infty\subset \sE_{X\to Y}$ converges to $F\in L(X, Y)\{\nu, \eps\}$ locally uniformly in $(\nu, \eps)$, then  $F\in \sE_{X\to Y}$. 

    (3) If $F\in\sE_{X\to Y}$ and $G\in\sE_{Y\to Z}$, then the composition $G F$ is in $\sE_{X\to Z}$.

    (4) If $F_{jk}\in\sE_{X\to Y}$ for $1\le j,k\le 2$, then 
    \[
    F:=\begin{bmatrix}
    F_{11} & F_{12} \\
    F_{21} & F_{22}
\end{bmatrix}\in\sE_{X^2\to Y^2}.
    \]

    (5) If $F(\nu,\eps)\in\sE_{X\to Y}$, then $F(0,\eps),F(0,0)\in\sE_{X\to Y}$.

    (6) If $F(\nu,\eps)\in\sE_{X\to Y}$ and $F(0,0)\in \cL(X, Y)$ is invertible, then $F(\nu,\eps)$ is invertible and $F(\nu,\eps)^{-1}\in\sE_{Y\to X}$ for $(\nu, \eps)$ near $(0, 0)$.  
\end{lemm}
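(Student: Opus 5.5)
The plan is to argue directly from the block decomposition in the definition of $\sE_{X\to Y}$, taking $\Pi$ to be the fixed projection. Part (1) comes first. If $F=\Pi F_1\Pi+\Pi^\perp F_2\Pi^\perp+\nu(\Pi^\perp F_3\Pi+\Pi F_4\Pi^\perp)$, then $\Pi^2=\Pi$, $\Pi\Pi^\perp=0$ and the invariance of $X,Y$ under $\Pi$ give $\Pi F\Pi=\Pi F_1\Pi$, $\Pi^\perp F\Pi^\perp=\Pi^\perp F_2\Pi^\perp$, $\Pi^\perp F\Pi=\nu\Pi^\perp F_3\Pi$ and $\Pi F\Pi^\perp=\nu\Pi F_4\Pi^\perp$. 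Each of these lies in $\sC_{X\to Y}$ or in $\nu\sC_{X\to Y}$, because composing with the fixed bounded projections preserves analyticity. Conversely, given \eqref{def:alter_def_of_sE}, I write $F=\Pi F\Pi+\Pi^\perp F\Pi^\perp+\Pi^\perp F\Pi+\Pi F\Pi^\perp$ and choose $F_1=\Pi F\Pi$, $F_2=\Pi^\perp F\Pi^\perp$, and $F_3,F_4$ with $\Pi^\perp F\Pi=\nu F_3$, $\Pi F\Pi^\perp=\nu F_4$. Replacing $F_3$ by $\Pi^\perp F_3\Pi$ keeps the identity. From here on I use (1) as the working characterization.

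For (2), linearity is immediate from (1). For the closure statement, $\Pi F^{(n)}\Pi\to\Pi F\Pi$ locally uniformly, and locally uniform limits of analytic functions are analytic (Weierstrass). So only the off-diagonal blocks need care. Write $\Pi^\perp F^{(n)}\Pi=\nu G_n$ with $G_n$ analytic. Then $G_n(\nu,\eps)=\frac{1}{2\pi i}\oint_{|\nu'|=r}\frac{\nu' G_n(\nu',\eps)}{\nu'(\nu'-\nu)}d\nu'$, which shows that $G_n$ is uniformly Cauchy on a smaller polydisc by the maximum principle. Hence $G_n\to G$ analytic with $\Pi^\perp F\Pi=\nu G$. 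Equivalently, an analytic $H$ lies in $\nu\sC$ exactly when $H(0,\eps)\equiv 0$, and this condition passes to the limit.

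For (3), I expand $GF=G(\Pi+\Pi^\perp)F$ and check each block with (1). For example, $\Pi GF\Pi=(\Pi G\Pi)(\Pi F\Pi)+(\Pi G\Pi^\perp)(\Pi^\perp F\Pi)\in\sC+\nu^2\sC\subset\sC$, and $\Pi^\perp GF\Pi=(\Pi^\perp G\Pi)(\Pi F\Pi)+(\Pi^\perp G\Pi^\perp)(\Pi^\perp F\Pi)\in\nu\sC$. The other two blocks go the same way; the $\nu$-parity bookkeeping closes because an off-diagonal block needs exactly one off-diagonal factor. Part (4) uses the diagonal projection $\mathrm{diag}(\Pi,\Pi)$ on $X^2$ and $Y^2$, and the blocks of $F$ are matrices of the blocks of the $F_{jk}$. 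Part (5) holds because setting $\nu=0$ kills the off-diagonal blocks, and the diagonal blocks of $F(0,\eps)$ and $F(0,0)$ are constant-in-$\nu$ analytic functions.

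Part (6) is where I expect the main difficulty. For small $(\nu,\eps)$, $F(\nu,\eps)$ is invertible by the Neumann series around $F(0,0)$, and $F^{-1}$ is analytic in $(\nu,\eps)$. To place the inverse in $\sE$, set $F_0=F(0,0)$, which lies in $\sE$ by (5). Its off-diagonal blocks vanish, so $F_0=\Pi F_0\Pi+\Pi^\perp F_0\Pi^\perp$ is block diagonal, and its inverse is block diagonal too. This needs a one-line check: $\Pi$ commutes with $F_0$, hence with $F_0^{-1}$, so $F_0^{-1}\in\sE_{Y\to X}$. Then $F^{-1}=\sum_{k\ge0}(-F_0^{-1}(F-F_0))^kF_0^{-1}$, where $F-F_0\in\sE$ by (2) and (5). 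Each partial sum is in $\sE$ by (3), and the series converges locally uniformly near $(0,0)$, so (2) gives $F^{-1}\in\sE_{Y\to X}$. The delicate points are the invariance and commutation step for $F_0^{-1}$ and the uniform convergence used in (2).
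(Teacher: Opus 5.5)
Your structure matches the paper's proof: block decomposition for (1), block expansion for (3), $\mathrm{diag}(\Pi,\Pi)$ for (4), setting $\nu=0$ for (5), and for (6) block-diagonality of $F(0,0)^{-1}$ plus the Neumann series and the closure property (2).

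\textbf{The gap: you misread $\sC_{X\to Y}$.} In the paper, $\sC_X$ consists of functions analytic near $(\mu,\eps)=(0,0)$ with $\mu=\nu^2$; the subsection is titled ``Analyticity in $\mu$''. As functions of $(\nu,\eps)$, the elements of $\sC$ are exactly the analytic functions that are \emph{even} in $\nu$. The elements of $\nu\sC$ are exactly those that are \emph{odd} in $\nu$. You treat $\sC$ as the functions analytic in $(\nu,\eps)$. This is not a harmless relabeling: the parity is what the paper later uses to conclude two things. First, the $(1,1),(1,2),(2,1),(2,2),(3,3)$ entries of $\Lmat_{\nu,\eps}$ are analytic in $\mu$. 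Second, the characteristic polynomial depends on $\nu$ only through $\nu^2$.

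\textbf{What survives.} With the correct definition, your arguments for (1), (3), (4), (5) go through unchanged. They only use $\sC\cdot\sC\subset\sC$, $\sC\cdot\nu\sC\subset\nu\sC$, and $\nu\sC\cdot\nu\sC\subset\nu^2\sC=\mu\sC\subset\sC$.

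\textbf{What fails: part (2).} It proves the wrong property.
\begin{itemize}
\item Diagonal blocks: Weierstrass, as you apply it, gives only analyticity of the limit in $(\nu,\eps)$, which is already a hypothesis on $F$. What you must show is that the limit is even in $\nu$.
\item Off-diagonal blocks: your criterion ``$H\in\nu\sC$ iff $H(0,\eps)\equiv 0$'' is false. Take $H=\nu^2K$ with $K\neq 0$ independent of $(\nu,\eps)$. It vanishes at $\nu=0$, but it is not $\nu$ times a function analytic in $\mu$, since that would force $\sqrt{\mu}\,K$ to be analytic at $\mu=0$.
\end{itemize}

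\textbf{The correct criterion is parity.} For $H$ analytic in $(\nu,\eps)$:
\begin{itemize}
\item $H\in\sC$ iff $[\nu^{2j+1}]H=0$ for all $j$;
\item $H\in\nu\sC$ iff $[\nu^{2j}]H=0$ for all $j$.
\end{itemize}
These conditions, equivalently $H(-\nu,\eps)=\pm H(\nu,\eps)$, pass to locally uniform (even pointwise) limits. This is exactly how the paper proves (2).

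Your (6) relies on (2) to pass from the partial sums of the Neumann series to $F^{-1}$, so it inherits this gap. Once (2) is repaired in this way, (6) goes through as you wrote it.
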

\begin{proof}
    (1) Suppose that $F\in\sE_{X\to Y}$, where 
    \[
    F=\Pi F_1\Pi +\Pi ^\perp F_2\Pi ^\perp+\nu\big(\Pi ^\perp F_3\Pi +\Pi  F_4\Pi ^\perp\big),\quad F_j\in\sC_{X\to Y}. 
    \]
Since 
   \begin{align*}
\Pi  F \Pi  = \Pi  F_1 \Pi  \in \sC_{X\to Y}, \quad \Pi ^\perp F \Pi ^\perp = \Pi ^\perp F_2 \Pi ^\perp \in \sC_{X\to Y}, \\
\Pi ^\perp F \Pi  = \nu\Pi ^\perp F_3 \Pi  \in \nu\sC_{X\to Y}, \quad \Pi  F \Pi ^\perp = \nu \Pi  F_4 \Pi ^\perp \in \nu\sC_{X\to Y},
\end{align*}
it is readily seen that $F$  satisfies \eqref{def:alter_def_of_sE}.

Conversely, if $F$ satisfies \eqref{def:alter_def_of_sE}, then 
\begin{align*}
    F &= \Pi  F \Pi  + \Pi  F \Pi ^\perp + \Pi ^\perp F \Pi  + \Pi ^\perp F \Pi ^\perp \\
      &= \Pi  \big( \Pi  F \Pi  \big) \Pi 
         + \Pi  \big( \Pi  F \Pi ^\perp \big) \Pi ^\perp
         + \Pi ^\perp \big( \Pi ^\perp F \Pi  \big) \Pi 
         + \Pi ^\perp \big( \Pi ^\perp F \Pi ^\perp \big) \Pi ^\perp \\
      &\in \Pi  \sC_{X\to Y} \Pi 
         + \Pi  \big( \nu\sC_{X\to Y} \big) \Pi ^\perp
         + \Pi ^\perp \big( \nu\sC_{X\to Y} \big) \Pi 
         + \Pi ^\perp \sC_{X\to Y} \Pi ^\perp = \sE_{X\to Y}.
\end{align*}
(2) $\sE_{X\to Y}$ is a linear subspace of $L(X,Y)\{\nu,\eps\}$ because $\sC_{X\to Y} $ is so.   Suppose that $\{F^{(n)}\}_{n=1}^\infty\subset \sE_{X\to Y}$ converges to $F\in L(X, Y)\{\nu, \eps\}$ locally uniformly in $(\nu, \eps)$. From part (1),  we have
\[
\Pi  F^{(n)} \Pi ,\Pi ^\perp F^{(n)} \Pi ^\perp \in \sC_{X\to Y}, \quad
\Pi ^\perp F^{(n)} \Pi ,\Pi  F^{(n)} \Pi ^\perp \in \nu\sC_{X\to Y}.
\]
This is equivalent to 
\[
[\nu^{2j+1}](\Pi  F^{(n)} \Pi ) = 
[\nu^{2j+1}](\Pi ^\perp F^{(n)} \Pi ^\perp) =
[\nu^{2j}](\Pi ^\perp F^{(n)} \Pi ) = 
[\nu^{2j}](\Pi  F^{(n)} \Pi ^\perp) = 0~\forall j\in\Z_{\ge 0}.
\]
Since the preceding equations are preserved under the  limit $n\to \infty$, part (1) again implies  $F\in\sE_{X\to Y}$. 

(3) Let  $F\in\sE_{X\to Y}$ and $G\in\sE_{Y\to Z}$. Appealing to part (1), we deduce 
\begin{align*}
    \Pi  G F \Pi  
    &= (\Pi  G \Pi  )(\Pi  F \Pi )
       + ( \Pi  G \Pi ^\perp )(\Pi ^\perp F \Pi ) \\
    &\in (\sC_{Y\to Z}\circ\sC_{X\to Y}) + (\nu\sC_{Y\to Z}\circ\nu\sC_{X\to Y}) \subset \sC_{X\to Z},
\end{align*}
\begin{align*}
    \Pi ^\perp G F \Pi 
    &= (\Pi ^\perp G \Pi ^\perp)(\Pi ^\perp F \Pi )
       + (\Pi ^\perp G \Pi )(\Pi  F \Pi ) \\
    &\in (\sC_{Y\to Z} \circ \nu\sC_{X\to Y}) + (\nu\sC_{Y\to Z} \circ \sC_{X\to Y}) \subset \nu\sC_{X\to Z}
\end{align*}
and $\Pi  G F \Pi ^\perp\in \nu\sC_{X\to Z}$, $\Pi ^\perp G F \Pi ^\perp\in\sC_{X\to Z}$ by an analogous argument. Thus   $GF\in \sE_{X\to Z}$ by (1). 

(4) Since 
\[
\Pi \equiv \begin{bmatrix}
    \Pi  &0\\
    0& \Pi 
\end{bmatrix}\quad \text{on}~ X^2,
\]
(1) implies  
\begin{align*}
    \Pi  F\Pi &=\begin{bmatrix}
    \Pi  &0\\
    0& \Pi 
\end{bmatrix}\begin{bmatrix}
    F_{11} & F_{12} \\
    F_{21} & F_{22}
\end{bmatrix}\begin{bmatrix}
    \Pi  &0\\
    0& \Pi 
\end{bmatrix}\\
&=
\begin{bmatrix}
    \Pi  F_{11} \Pi  & \Pi  F_{12} \Pi  \\
    \Pi  F_{21} \Pi  & \Pi  F_{22} \Pi 
\end{bmatrix}\\
&\in
\begin{bmatrix}
    \sC_{X\to Y} & \sC_{X\to Y} \\
    \sC_{X\to Y} & \sC_{X\to Y}
\end{bmatrix}
\subset \sC_{X^2 \to Y^2}.
\end{align*}
Repeating this argument, we obtain that $F$ satisfies \eqref{def:alter_def_of_sE}, and hence $F\in\sE_{X^2\to Y^2}$.

(5) This is obvious from the definition \ref{def:sE}.

(6) Setting $\nu=0$ in  \eqref{def:alter_def_of_sE} yields 
\[
F(0,0)=\Pi  F(0,0)\Pi +\Pi ^\perp F(0,0)\Pi ^\perp.
\]

This implies $F(0, 0)\Pi =\Pi  F(0,0)$, whence $F(0, 0)^{-1}\Pi =\Pi  F(0, 0)^{-1}$. It follows that
\bq\label{inv:Eclass}
F(0,0)^{-1}=\Pi  F(0,0)^{-1}\Pi +\Pi ^\perp F(0,0)^{-1}\Pi ^\perp\in\sE_{Y\to X}.
\eq
For sufficiently small $\nu$ and $\eps$, we have 
\begin{equation}
    \label{eq:Neumann_F_nu_eps}
   F(\nu,\eps)^{-1}=  \sum_{k=0}^\infty (-1)^kF(0,0)^{-1}\Big[\Big(F(\nu,\eps)-F(0,0)\Big)F(0,0)^{-1}\Big]^k.
\end{equation}
Combining \eqref{inv:Eclass} with part (3), we deduce 
\[
\Big(F(\nu,\eps)-F(0,0)\Big)F(0,0)^{-1}\in \sE_{Y\to Y},\quad \Big[\Big(F(\nu,\eps)-F(0,0)\Big)F(0,0)^{-1}\Big]^k\in \sE_{Y\to Y},
\]
and hence each term in the series \eqref{eq:Neumann_F_nu_eps} is in $\sE_{Y\to X}$. Therefore, part  (2) implies $F(\nu,\eps)^{-1}\in \sE_{Y\to X}$.
\end{proof}
Next, we apply the preceding lemma to determine entries of $ \Lmat_{\nu,\eps}$ that are analytic in $\mu$. 
\begin{prop} We have
\begin{equation}
\label{eq:Lmat_in_sC}
    \Lmat_{\nu,\eps}\in\begin{bmatrix}
        \sC&\sC&\nu\sC\\
        \sC&\sC&\nu\sC\\
        \nu\sC&\nu\sC&\sC
    \end{bmatrix}.
\end{equation}
\end{prop}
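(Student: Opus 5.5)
The plan is to apply Lemma \ref{lemma:sE} with $H=L^2(\T)^2$, $X=(H^1(\T))^2$, $Y=(L^2(\T))^2$ and $\Pi=\Pi_0$ acting componentwise. I will first show that $\L^+_{\nu,\eps}\in\sE_{X\to Y}$, then propagate this to the resolvent, the spectral projection, Kato's transformation, the basis $U_m^{\nu,\eps}$, and finally to the entries \eqref{def:Lmat}.

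\emph{Step 1: the operator.} In $\L^+_{\nu,\eps}$ the only non-analytic dependence on $\mu=\nu^2$ comes from $S_\nu=\nu\Pi_0+|D+\nu^2|^{1/2}\Pi_0^\perp$. The term $\Pi_0^\perp|D+\nu^2|^{1/2}\Pi_0^\perp$ is analytic in $\mu$, by the series in the proof of Proposition \ref{prop:L+}, and $\Pi_0 S_\nu\Pi_0=\nu\Pi_0$. Hence $S_\nu\in\sE$: the $\Pi_0\cdot\Pi_0$ block is $\nu$ times an analytic operator, which is allowed, although this is more than the definition requires. Multiplication by $p$ or $\frac{1+q}{\zeta'}$ does not depend on $\nu$ and is analytic in $\eps$, so it lies in $\sC\subset\sE$. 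The Fourier multipliers $\sign^+(D)$ and constants commute with $\Pi_0$ and also lie in $\sE$. Lemma \ref{lemma:sE}(3),(4) then gives $S_\nu pS_\nu$, $S_\nu\frac{1+q}{\zeta'}S_\nu$, $\L^+_{\nu,\eps}\in\sE$. A direct check confirms the block structure: $\Pi_0 S_\nu pS_\nu\Pi_0=\nu^2\Pi_0 p\Pi_0\in\sC$, the off-diagonal blocks carry exactly one factor of $\nu$, and the $\Pi_0^\perp$ block is analytic in $\mu$.

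\emph{Step 2: the projection and basis.} For $\lambda\in\Gamma$, the operator $\L^+_{0,0}-\lambda$ is an invertible Fourier multiplier. Applying Lemma \ref{lemma:sE}(6) to $F=\L^+_{\nu,\eps}-\lambda$ shows $(\L^+_{\nu,\eps}-\lambda)^{-1}\in\sE_{Y\to X}$. This holds uniformly in $\lambda\in\Gamma$, since the Neumann series converges uniformly there. Writing the contour integral \eqref{def:P_spec_proj} as a limit of Riemann sums and using Lemma \ref{lemma:sE}(2), we get $P_{\nu,\eps}\in\sE$. Next, $P_{0,0}$ commutes with $\Pi_0$, so $\Delta P\in\sE$, and $(1-(\Delta P)^2)^{-1/2}$ is given by a convergent power series, hence lies in $\sE$ by (2),(3). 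It follows that $\cK_{\nu,\eps}\in\sE$.

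The unperturbed vectors satisfy $U_1,U_2\in\operatorname{Ran}\Pi_0^\perp$ and $U_3\in\operatorname{Ran}\Pi_0$. Therefore $\Pi_0^\perp U_{1,2}^{\nu,\eps}$ and $\Pi_0U_3^{\nu,\eps}$ lie in $\sC$, while $\Pi_0U_{1,2}^{\nu,\eps}$ and $\Pi_0^\perp U_3^{\nu,\eps}$ lie in $\nu\sC$.

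\emph{Step 3: the entries.} We have $\B_{\nu,\eps}=-J\L^+_{\nu,\eps}\in\sE$, since $J$ commutes with $\Pi_0$. Each inner product splits orthogonally as
\[
(\B U_m^{\nu,\eps},U_n^{\nu,\eps})=(\Pi_0\B U_m^{\nu,\eps},\Pi_0U_n^{\nu,\eps})+(\Pi_0^\perp\B U_m^{\nu,\eps},\Pi_0^\perp U_n^{\nu,\eps}).
\]
Counting factors of $\nu$ shows that this is in $\sC$ when $m,n$ are both in $\{1,2\}$ or both equal $3$, and in $\nu\sC$ otherwise. One technical point is that the inner product is sesquilinear, so conjugation appears. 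Since $\nu$ and $\eps$ are real, conjugation preserves analyticity and the power-series parity in $\nu$, so this causes no problem.

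The main obstacle I expect is Step 2, specifically justifying that the $\sE$ class is closed under the contour integral and under the functional calculus for $(1-(\Delta P)^2)^{-1/2}$. These are operators from $Y$ to $X$, so the mapping spaces must be tracked as well. I would handle both through the closure property in Lemma \ref{lemma:sE}(2), using uniform convergence of Riemann sums for the integral and of the binomial series for the square root, with $\Delta P=O(\eps,\nu^2)$ small.
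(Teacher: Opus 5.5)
Your route is the paper's. You show $\L^+_{\nu,\eps}\in\sE$, invert via Lemma~\ref{lemma:sE}(6), pass to $P_{\nu,\eps}$ and $\cK_{\nu,\eps}$, and read off the entries from $\Pi_0U_1=\Pi_0U_2=\Pi_0^\perp U_3=0$. Your Riemann-sum and binomial-series arguments in Step 2 are a sound way to justify what the paper states without detail.

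However, two membership claims in Step 1 are false.

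\emph{First claim.} The class $\sC$ consists of functions analytic in $(\mu,\eps)$, i.e.\ even in $\nu$. So $\nu\sC\cap\sC=\{0\}$, not $\nu\sC\subset\sC$. Hence $S_\nu\notin\sE$, because its $\Pi_0$--$\Pi_0$ block $\nu\Pi_0$ is odd in $\nu$.

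\emph{Second claim.} Likewise $\sC\not\subset\sE$, and multiplication by $p$ is not in $\sE$. Its off-diagonal block $\Pi_0^\perp p\,\Pi_0$ is independent of $\nu$ and nonzero for $\eps\neq0$, so it does not lie in $\nu\sC$.

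Consequently Lemma~\ref{lemma:sE}(3) cannot be applied factor by factor to $S_\nu\circ p\circ S_\nu$; the two defects only cancel in the product.

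What actually proves $S_\nu pS_\nu\in\sE$ and $S_\nu\frac{1+q}{\zeta'}S_\nu\in\sE$ is your direct block computation combined with Lemma~\ref{lemma:sE}(1). This is precisely the paper's argument. As a right factor, $S_\nu$ lies in $\nu\sC\,\Pi_0+\sC\,\Pi_0^\perp$. As a left factor, it lies in $\nu\Pi_0\sC+\Pi_0^\perp\sC$. The two factors of $\nu$ therefore pair into $\nu^2=\mu$ on the $\Pi_0$--$\Pi_0$ block.

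Once Step 1 rests on that computation instead of the two false memberships, the remainder of your proof goes through as written.
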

\begin{proof}
It follows from the expansion  \eqref{eq:expansion_sqrtOp_temp} of $ S_\nu $ that 
\begin{equation}
\label{eq:sqrtOp_in_sC}
    S_\nu \in \nu\sC_{H^1\to H^{1/2}}\Pi_0+\sC_{H^1\to H^{1/2}}\Pi_0^\perp~
    \text{ and }~ S_\nu \in\nu\Pi_0\sC_{H^{1/2}\to L^2}+\Pi_0^\perp \sC_{H^{1/2}\to L^2}.
\end{equation}
Since $p\in C^\infty(\T)$ is independent of $\nu$,  we deduce that 
\begin{equation}
    \begin{aligned}
         &S_\nu \circ pS_\nu \\
         &\in(\nu\Pi_0\sC_{H^{1/2}\to L^2}+\Pi_0^\perp \sC_{H^{1/2}\to L^2})\circ(\nu\sC_{H^1\to H^{1/2}}\Pi_0+\sC_{H^1\to H^{1/2}}\Pi_0^\perp)\subset \sE_{H^1\to L^2}.
    \end{aligned}
\end{equation}
Since 
\begin{align*}
    i\signp(D)=\Pi_0i\Pi_0+\Pi_0^\perp \Big(i\sign(D)\Big)\Pi_0^\perp &\in \sE_{L^2\to L^2},
\end{align*}
 Lemma \ref{lemma:sE} (3) implies
\[
i\sign^+(D)\Big(S_\nu  \circ pS_\nu -\frac{\gamma}{2}\Big)\in\sE_{H^1\to L^2}.
\]
By the same argument, all  entries of  $\L_{\nu,\eps}^+$ are in $\sE_{H^1\to L^2}$. Consequently, it follows from  Lemma \ref{lemma:sE} (4) and (5) that 
\begin{equation}
    \L_{\nu,\eps}^+,  \B_{\nu,\eps},  \L_{0,\eps}^+,  \L_{0,0}^+\in\sE_{(H^1)^2\to (L^2)^2}.
\end{equation}
In particular, for any $\ld \in \Gamma$, $ \L_{0,0}^+-\ld$ is invertible and belongs to $\sE_{(H^1)^2\to (L^2)^2}$. Consequently, $(\L_{\nu,\eps}^+-\lambda)^{-1}\in\sE_{(L^2)^2\to (H^1)^2}$ by Lemma \ref{lemma:sE} (6). We conclude that $P_{\nu,\eps}$ is in $\sE_{(H^1)^2\to (H^1)^2}$, and hence so is $\cK_{\nu,\eps}$ and 
\[
\B_{\nu,\eps}\cK_{\nu,\eps}\in  \sE_{(H^1)^2\to (L^2)^2}.
\]
From the definition \eqref{basis_k=1_2} of the $U_j$, we have 
\[
\Pi_0U_1=\Pi_0U_2=\Pi_0^\perp U_3=0.
\]
Hence, recalling the definition \ref{def:sE} of $\sE$, we deduce
\begin{align*}
    \cK_{\nu,\eps}U_j&\in \Pi_0^\perp \sC_{(H^1)^2}+\nu \Pi_0\sC_{(H^1)^2}\subset\sC_{(\mathring{H}^1)^2}+\nu\sC_{\C^2},~j=1,2,\\
  \B_{\nu,\eps}\cK_{\nu,\eps}U_j&\in \Pi_0^\perp \sC_{(L^2)^2}+\nu \Pi_0\sC_{(L^2)^2}\subset\sC_{(\rL^2)^2}+\nu\sC_{\C^2},~j=1,2,\\
    \cK_{\nu,\eps}U_3&\in \Pi_0 \sC_{(H^1)^2}+\nu \Pi_0^\perp\sC_{(H^1)^2}
       \subset\sC_{\C^2}+\nu\sC_{(\mathring{H}^1)^2},\\
       \B_{\nu,\eps}\cK_{\nu,\eps}U_3&\in \Pi_0 \sC_{(L^2)^2}+\nu \Pi_0^\perp\sC_{(L^2)^2}\subset\sC_{\C^2}+\nu\sC_{(\mathring{L}^2)^2}. 
\end{align*}
Using these properties for the  inner products $(\B_{\nu,\eps}\cK_{\nu,\eps}U_j,\cK_{\nu,\eps}U_k)$, we obtain \eqref{eq:Lmat_in_sC}. 
\end{proof}
%%%%%%%%%%%%%%%
\subsection{Expansion in $\nu$}
In this subsection, we refine \eqref{eq:Lmat_in_sC} by proving exact relations and vanishing of certain entries in the  $\nu^j$-coefficients of $\Lmat_{\nu,\eps}$, for $j=0, 1, 2$. By ``exact'' we mean that these relations and vanishing hold at all orders $\eps^k$, $k\ge 0$. To this end, we shall exploit parity of not only the variable coefficients of $\L^+_{\nu, \eps}$  but also the eigenvectors. Since the unperturbed eigenvectors $U_j$'s are exponentials, we obtain parity by considering 
\begin{equation}
    \label{def:basis_Vpm}
    V_\pm :=U_1\pm U_2\in\cV_{0,0},
\end{equation}
that is,
\begin{equation}
\label{basis_Vpm}
    V_+=\begin{bmatrix}
        -2\sin(x)\\
        (2c_0-\gamma)\cos(x)
    \end{bmatrix}, \quad V_-=\begin{bmatrix}
        2i\cos(x)\\
        (2c_0-\gamma)i\sin(x)
    \end{bmatrix}.
\end{equation}
Each entry of $V_\pm$ is either an odd or even function on $\T$.  The corresponding perturbed eigenvectors are 
\begin{equation}\label{form:Vpm:cK}
    V_\pm^{\nu,\eps}:=U_1^{\nu,\eps}\pm U_2^{\nu,\eps}=\cK_{\nu,\eps}V_\pm\in\cV_{\nu,\eps}.
\end{equation}
If $X(\nu, \eps)$ is an analytic function, we shall use the notation 
\bq
X^{[j]}:=[\nu^j]X,\quad  \L\bracketNu{j}:=[\nu^j]\L_{\nu,\eps}^+. 
\eq
\subsubsection{The $\nu^0$-coefficient of $\Lmat$}
We recall from Theorem \ref{thm:eigenvectors_U} that for $0<|\eps|<\eps_0$, $0$ is an eigenvalue of $\L_{0,\eps}^+$ with algebraic multiplicity 3 
and geometric multiplicity 2. Hence, we have $(\L_{0,\eps}^+)^2|_{\cV_{0,\eps}}=0$, so that
\begin{equation}\label{squareL=0}
    \Lmat_{0,\eps}^2=0.
\end{equation}
Since $  \Lmat_{0, 0}=0$, \eqref{squareL=0} is also valid when $\eps=0$.  From \eqref{eq:symm_Lmat} and the fact that $\beta_2=-\beta_1$, we have
\begin{equation}
    (\Lmat_{0,\eps})_{12}=-(\Lmat_{0,\eps})_{21}.
\end{equation}
On the other hand,  \eqref{eq:Lmat_in_sC} implies the form 
\[
\Lmat\bracketNu{0}\equiv \Lmat_{0,\eps}= \begin{bmatrix}
    A& B&0\\
    -B&C&0\\
    0&0&D
\end{bmatrix}.
\]
Using this and \eqref{squareL=0} gives 
 \begin{equation}
        0=\begin{bmatrix}
            A^2-B^2&AB+BC&0\\
            -AB-BC&-B^2+C^2&0\\
            0&0&D^2
        \end{bmatrix},
    \end{equation}
whence 
\[
        A^2-B^2=(A+C)B=-B^2+C^2=D^2=0. 
\]
It follows that 
    \begin{equation}\label{ABCD}
        A=\pm B, C=\mp B, D=0.
    \end{equation}
Setting $\nu=0$ in the expansion \eqref{eq:Lmat_expand_in_nu_eps} of $\Lmat_{0,\eps}$, we deduce in view of \eqref{ABCD} that $A=B$, so that 
\bq
(\Lmat_{0,\eps})_{11}= (\Lmat_{0,\eps})_{12},\quad (\Lmat_{0,\eps})_{22}= -(\Lmat_{0,\eps})_{12}. 
\eq
Thus  $\Lmat\bracketNu{0}$ has the form 
\begin{equation}
    \label{eq:Lmat_nu_0_final}
    \Lmat\bracketNu{0}=\begin{bmatrix}
        -\frac{i \left(3 \left(c_0^6-c_0^4+c_0^2\right)+1\right)}{8 c_0^5}\eps^2-ib_1(\eps)\eps^3&-\frac{i \left(3 \left(c_0^6-c_0^4+c_0^2\right)+1\right)}{8 c_0^5}\eps^2-ib_1(\eps)\eps^3&0\\
        \frac{i \left(3 \left(c_0^6-c_0^4+c_0^2\right)+1\right)}{8 c_0^5}\eps^2+ib_1(\eps)\eps^3&\frac{i \left(3 \left(c_0^6-c_0^4+c_0^2\right)+1\right)}{8 c_0^5}\eps^2+ib_1(\eps)\eps^3&0\\
        0&0&0
    \end{bmatrix},\quad b_1\in\anaR{\eps}.
\end{equation}
In particular, we have 
\begin{equation}
    (\Lmat_{0,\eps})_{1j}+(\Lmat_{0,\eps})_{2j}=0,\quad j=1,2,3,
\end{equation}
which in conjunction with the  expression  \eqref{def:Lmat} of $\Lmat_{\nu,\eps}$ implies 
\bq\label{BV+Uj}
\begin{aligned}
    0=(\B_{0,\eps}(U_1^{0,\eps}+U_2^{0,\eps}),U_j^{0,\eps})=(\B_{0,\eps}V_+^{0,\eps},U_j^{0,\eps}),\quad j=1,2,3.
\end{aligned}
\eq
Next, since  $\L_{0,\eps}^+V_+^{0,\eps}\in \cV_{0,\eps}$, using  \eqref{eq:expr_cV_vector_in_basis_by_inner_product|} gives 
\begin{equation}
    \begin{aligned}
             \L\bracketNu{0}V_+\bracketNu{0}=\L_{0,\eps}^+V_+^{0,\eps}&=-i \sum_{k=1}^3\frac{1}{2\pi\beta_k}(J\L_{0,\eps}^+V_+^{0,\eps},U_k^{0,\eps})U_k^{0,\eps}=-i \sum_{k=1}^3\frac{1}{2\pi\beta_k}(-\B_{0,\eps}V_+^{0,\eps},U_k^{0,\eps})U_k^{0,\eps},
    \end{aligned}
\end{equation}
Combining this with \eqref{BV+Uj} yields
\bq\label{eq:LVp_is_0}
\L\bracketNu{0}V_+\bracketNu{0}=0.
\eq
Since $(\Lmat_{0,\eps})_{3j}=0$ (from \eqref{eq:Lmat_nu_0_final}), an analogous argument gives
\begin{equation}
    \label{eq:LU3_is_0}
   \L\bracketNu{0}U_3\bracketNu{0}= \L^+_{0,\eps}U_3^{0,\eps}=0.
\end{equation}
\subsubsection{Parity}
\def\oddVec{\begin{bmatrix}
even_0\\
odd
\end{bmatrix}}
\def\evenVec{\begin{bmatrix}
odd\\
even_0
\end{bmatrix}}
\def\Rt{\wt{\newRev}}
\def\rt{\wt{\rev}}
\def\vecU{\mathtt{U}}
\def\vecV{\mathtt{V}}
\def\A{\mathcal{A}}
To leverage parity, we single out two types of vectors, $\oddVec$ and $\evenVec$. Clearly the inner product between a vector of one type and a vector of the other type is zero. Moreover, the inner product between a vector of either type and a constant vector is  zero.  The above types of vectors can be characterized using an involution operator similar to the one in Proposition \ref{prop:L+} but without conjugation:
\begin{equation}
\label{def:Rt_rt}
    \begin{aligned}
        \rt f(x)&=f(-x),\quad 
        \Rt\begin{bmatrix}
            f\\
            g
        \end{bmatrix}&=\begin{bmatrix}
            -\rt f\\
            \rt g
        \end{bmatrix}.
    \end{aligned}
\end{equation}
Then we have
\begin{align}\label{iff:evenVec_Rt}
\forall \vecV,~\text{if~}\Pi_0\vecV=0,~\text{then}~
    \Rt\vecV=\vecV&\iff \vecV=\evenVec~\text{and}~  \Rt\vecV=-\vecV\iff \vecV=\oddVec.
\end{align}
We record simple properties of $\rt$ in the following lemma, whose proof is omitted. 
\begin{lemm}
\label{lemma:commute_rt}
    (1) If $f(x)$ is even, then $\rt(fg)=f \rt g$.
    
    (2 For any symbol $a: \Z\to \C$, we have 
    \[
    \rt \circ a(D)=a(D)\circ \rt~\text{if}~a~\text{is~even,~and}~  \rt \circ a(D)=-a(D)\circ \rt~\text{if}~a~\text{is~odd}.
    \]
\end{lemm}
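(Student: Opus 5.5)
Both parts follow from the definitions $\rt f(x)=f(-x)$ and \eqref{def:FM}, using a direct change of variables in physical space for (1) and a computation on Fourier coefficients for (2).

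For (1), I would evaluate at an arbitrary point: $\rt(fg)(x)=f(-x)g(-x)$. Since $f$ is even, $f(-x)=f(x)$, so this equals $f(x)(\rt g)(x)$, i.e. $\rt(fg)=f\,\rt g$. No regularity beyond making the product meaningful (for instance $f\in L^\infty$ and $g\in L^2(\T)$) is needed.

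For (2), I would first compute the Fourier coefficients of $\rt g$. Substituting $x\mapsto -x$ in the integral over $\T$, and using that the domain of integration is invariant under reflection, gives $\widehat{\rt g}(k)=\int_\T g(-x)e^{-ikx}\,dx=\hat g(-k)$. From the definition of $a(D)$,
\[
\widehat{a(D)\rt g}(k)=a(k)\hat g(-k).
\]
On the other hand,
\[
\widehat{\rt\, a(D)g}(k)=\widehat{a(D)g}(-k)=a(-k)\hat g(-k).
\]
If $a$ is even, then $a(-k)=a(k)$ and the two coefficient sequences coincide. If $a$ is odd, then $a(-k)=-a(k)$ and they differ by a sign. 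Since a function in $L^2(\T)$ is determined by its Fourier coefficients, this yields $\rt\circ a(D)=\pm a(D)\circ\rt$ on the natural domain of $a(D)$. The same computation applies verbatim on $\T_L$ with $D_L$.

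There is no genuine obstacle. The only point needing care is the sign convention in the reflected Fourier coefficient, namely $\widehat{\rt g}(k)=\hat g(-k)$ with no complex conjugation. This is what distinguishes $\rt$ from $\rev$ in Proposition \ref{prop:L+}, and it is why the parity of the symbol, rather than its reality, governs commutation. The result will be applied to symbols such as $|D+\nu^2|$, which is not even when $\nu\neq 0$, and $\sign^+(D)$, which is neither even nor odd; in those cases the lemma does not apply directly, so later arguments must track $\nu$-coefficients separately.
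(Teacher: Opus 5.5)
Your proof is correct and is the routine verification the paper leaves implicit; the paper states this lemma with its proof omitted. For (1) you evaluate pointwise using $f(-x)=f(x)$, and for (2) you use $\widehat{g(-\cdot)}(k)=\hat g(-k)$, so that the two compositions have Fourier coefficients $a(k)\hat g(-k)$ and $a(-k)\hat g(-k)$. Your closing caveat is also consistent with how the paper applies the lemma: because $\sign^+$ is neither even nor odd, the paper first replaces $\sign^+(D)$ by $\sign(D)$ on mean-free vectors, and it works only with the $\nu^0$ and $\nu^2$ coefficients of the operator.
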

The next lemma concerns $\L\bracketNu{0}$ and $\L\bracketNu{2}$. 
\begin{lemm}
\label{lemma:parity_main}    
    (1) The following mapping properties hold:
   \begin{align}
   &\L\bracketNu{0}: (\mathring{H}^1)^2\to (\rL^2)^2,\quad  \L\bracketNu{0}: \C^2\to \C^2,\\
   &P\bracketNu{0}:  (\rL^2)^2\to (\mathring{H}^1)^2,\\ \label{mean:cK0}
    &\cK\bracketNu{0}: (\rL^2)^2\to (\rL^2)^2,\quad \cK\bracketNu{0}: (\mathring{H}^1)^2\to (\mathring{H}^1)^2,\\
   & \L\bracketNu{2}: (\mathring{H}^1)^2\to (\rL^2)^2.
   \end{align}
    (2) If $\Pi_0\vecV=0$, then 
    \begin{align}
    \label{eq:parity_L0_switch}
        \L\bracketNu{0}\Rt\vecV&=-\Rt\L\bracketNu{0}\vecV,\\
        \label{eq:parity_P0_preserve}
        P\bracketNu{0}\Rt \vecV&=\Rt P\bracketNu{0}\vecV,\\
        \label{eq:parity_cK0_preserve}
        \cK\bracketNu{0}\Rt \vecV&=\Rt \cK\bracketNu{0}\vecV,\\
        \label{eq:parity_L2_preserve}
        \L\bracketNu{2}\Rt\vecV&=\Rt\L\bracketNu{2}\vecV.
    \end{align}
\end{lemm}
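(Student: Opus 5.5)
The plan is to read off everything from the explicit structure of $\L^+_{\nu,\eps}$ in \eqref{def:Lplus_new}, after first expanding $S_\nu$ in $\nu$. By \eqref{eq:expansion_sqrtOp_temp},
\[
S_\nu=\nu\Pi_0+|D|^{1/2}\Pi_0^\perp+\nu^2 a_1 D^{-1}|D|^{1/2}\Pi_0^\perp+O(\nu^4).
\]
Write $S\bracketNu{0}=|D|^{1/2}\Pi_0^\perp$, $S\bracketNu{1}=\Pi_0$, and $S\bracketNu{2}=a_1D^{-1}|D|^{1/2}\Pi_0^\perp$, where $a_1=\mez$. Expanding the products $S_\nu\circ pS_\nu$ and $S_\nu\circ\frac{1+q}{\zeta'}S_\nu$ then gives
\[
(S_\nu\circ f S_\nu)\bracketNu{0}=|D|^{1/2}\Pi_0^\perp f|D|^{1/2}\Pi_0^\perp
\]
and
\[
(S_\nu\circ fS_\nu)\bracketNu{2}=\Pi_0 f\Pi_0+a_1\bigl(D^{-1}|D|^{1/2}\Pi_0^\perp f|D|^{1/2}\Pi_0^\perp+|D|^{1/2}\Pi_0^\perp fD^{-1}|D|^{1/2}\Pi_0^\perp\bigr).
\]
In the last formula, the term $\Pi_0 f\Pi_0$ maps mean-zero functions to $0$.

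For part (1), I would argue as follows. The $\nu^0$ coefficient of each entry is either a composition sandwiched between factors of $\Pi_0^\perp$, or the constant multipliers $i\signp(D)$, $\gamma/2$, $\gamma^2/4$, or $1$. All of these commute with $\Pi_0$. Hence $\L\bracketNu{0}$ preserves both $(\mathring H^1)^2$ and $\C^2$. In fact $\L\bracketNu{0}$ commutes with $\Pi_0$, which is consistent with Lemma \ref{lemma:sE}(5), since $\L^+_{0,\eps}\in\sE$ forces $\Pi_0\L\bracketNu{0}\Pi_0^\perp=\Pi_0^\perp\L\bracketNu{0}\Pi_0=0$. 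The same reasoning gives $P\bracketNu{0}=P_{0,\eps}$ and $\cK\bracketNu{0}=\cK_{0,\eps}$ from $P_{\nu,\eps},\cK_{\nu,\eps}\in\sE$. These operators commute with $\Pi_0$, so they preserve mean-zero subspaces; the regularity $H^1$ for $P$ comes from the resolvent. For $\L\bracketNu{2}$ acting on mean-zero inputs, I would note that the $\Pi_0 f\Pi_0$ term vanishes. The remaining terms all carry $\Pi_0^\perp$ on the left, apart from the terms $\frac{\gamma}{2}$, $1$, and $\frac{\gamma^2}{4}$, which contribute only to the $\nu^0$ coefficient. So the output is mean-zero.

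For part (2), I would combine the facts that $p$ and $\frac{1+q}{\zeta'}$ are even (Lemma \ref{lemm:pq}) with Lemma \ref{lemma:commute_rt}. The symbols $|k|^{1/2}$ and the projection $\Pi_0$ are even. The symbol $\signp(k)$ agrees with $\sign(k)$ on $k\neq 0$ and is therefore odd on mean-zero functions. The symbol $k^{-1}|k|^{1/2}$ is odd. Consequently $\rt$ commutes with $|D|^{1/2}\Pi_0^\perp f|D|^{1/2}\Pi_0^\perp$. It anticommutes with $i\signp(D)$ restricted to mean-zero functions, and it anticommutes with each of the two $a_1$-terms in $(S_\nu\circ fS_\nu)\bracketNu{2}$, each of which contains exactly one odd factor. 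Now write $\L\bracketNu{0}=\begin{bmatrix}\mathcal{O}_{11}&1\\ \mathcal{E}_{21}&\mathcal{O}_{22}\end{bmatrix}$ on mean-zero functions. Here the diagonal entries anticommute with $\rt$, and the off-diagonal entries commute with $\rt$. A direct check against $\Rt=\mathrm{diag}(-\rt,\rt)$ then gives $\L\bracketNu{0}\Rt=-\Rt\L\bracketNu{0}$. The diagonal entries of $\L\bracketNu{2}$ are $i\signp(D)$ composed with an odd operator, and the product of these is even. The off-diagonal entry of $\L\bracketNu{2}$ is $-(S_\nu\circ\frac{1+q}{\zeta'}S_\nu)\bracketNu{2}$, which is odd, and the $(1,2)$ entry is zero. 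This parity pattern gives commutation with $\Rt$.

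For $P\bracketNu{0}$, I would use the resolvent. The anticommutation $\L\bracketNu{0}\Rt=-\Rt\L\bracketNu{0}$ gives $(\L\bracketNu{0}-\ld)^{-1}\Rt=-\Rt(\L\bracketNu{0}+\ld)^{-1}$ on mean-zero functions. Substituting $\ld\mapsto-\ld$ in the contour integral \eqref{def:P_spec_proj}, with $\Gamma$ a circle centred at $0$ so that it is symmetric, yields $P\bracketNu{0}\Rt=\Rt P\bracketNu{0}$. The unperturbed projection $P_{0,0}$ is a Fourier multiplier commuting with $\Rt$, so $\cK\bracketNu{0}$, being built from $P\bracketNu{0}$ and $P_{0,0}$ via \eqref{def:Kato}, commutes with $\Rt$ as well. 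The main care point is the zero-frequency bookkeeping for $\signp$ and $S_\nu$. This is handled by restricting to $\Pi_0\vecV=0$ and by the invariance established in part (1).
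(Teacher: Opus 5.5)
Your proposal is correct and follows essentially the same route as the paper. You expand $S_\nu\circ fS_\nu$ in $\nu$, use the evenness of $p$ and $\frac{1+q}{\zeta'}$ together with $\signp(D)=\sign(D)$ on mean-zero inputs, and use the $\ld\mapsto-\ld$ symmetry of the circle $\Gamma$ for $P\bracketNu{0}$. The only variation is that you get the $\Pi_0$-invariance of $P\bracketNu{0},\cK\bracketNu{0}$ from their membership in $\sE$, whereas the paper restricts $\L\bracketNu{0}-\ld$ to the mean-zero subspace and inverts there; the two steps are equivalent.

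One small imprecision: $P_{0,0}$ does not commute with $\Rt$ on all of $(L^2)^2$. At frequency $0$ one has $\signp(0)=1$ and $\gamma\neq0$, so $\Rt$ does not anticommute with $\L^+_{0,0}$ on $\C^2$. It does commute with $\Rt$ on mean-zero vectors, as the $\eps=0$ case of \eqref{eq:parity_P0_preserve}, and that is all you need for $\cK\bracketNu{0}$.
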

\begin{proof}
    (1) We recall that 
    \bq\label{form:LNu0}
    \L\bracketNu{0}=\begin{bmatrix}
        i\signp(D)\Big(|D|^{1/2}\circ p|D|^{1/2}-\frac{\gamma}{2}\Big)&1\\
        -|D|^{1/2}\circ \frac{1+q}{\zeta'}|D|^{1/2}-\frac{\gamma^2}{4}&\Big(|D|^{1/2}\circ p|D|^{1/2}-\frac{\gamma}{2}\Big) i\signp(D)
    \end{bmatrix}.
    \eq
    It is readily seen that $  \L\bracketNu{0}: \C^2\to \C^2$ and $\L\bracketNu{0}: (\mathring{H}^1)^2\to (\rL^2)^2$. Moreover,  $\L\bracketNu{0}-\lambda: (H^1)^2\to (L^2)^2$ is invertible for all $\ld \in \Gamma$. Thus  $\L\bracketNu{0}-\lambda: (\mathring{H}^1)^2\to (\rL^2)^2$ is invertible for all $\ld \in \Gamma$. It follows that 
\[
P\bracketNu{0}=-\frac{1}{2\pi i}\oint_\Gamma (\L\bracketNu{0}-\lambda)^{-1}d\lambda: (\rL^2)^2\to (\mathring{H}^1)^2. 
\]
Then, using the definition of $\cK\bracketNu{0}=\cK_{0, \eps}$, we deduce that $\cK\bracketNu{0}: (\rL^2)^2\to (\rL^2)^2$ and $\cK\bracketNu{0}: (\mathring{H}^1)^2\to (\mathring{H}^1)^2$. 

To obtain the formula for $\L\bracketNu{2}$, we first expand the operator
\[
   \fD_\nu(f):=S_\nu \circ fS_\nu 
   \]
  using \eqref{eq:expansion_sqrtOp_temp}, i.e.,
  \[
   S_\nu =|D|^{1/2}\Pi_0^\perp+\Pi_0\nu+\frac{1}{2}D^{-1}|D|^{1/2}\Pi_0^\perp\nu^2+O(\nu^3).
   \]
  The result is 
  \begin{equation}\label{eq:template_fD}
    \begin{aligned}
        \fD_\nu(f)
        &=\fD\bracketNu 0(f)+\nu\fD\bracketNu 1(f)+\nu^2\fD\bracketNu 2(f)+O(\nu^3),
    \end{aligned}
\end{equation}
where
\begin{align}
    \fD\bracketNu 0(f)&=|D|^{1/2}\Pi_0^\perp\circ f|D|^{1/2}\Pi_0^\perp,\\
    \label{eq:fD1}
    \fD\bracketNu 1(f)&=\Pi_0\circ f|D|^{1/2}\Pi_0^\perp+|D|^{1/2}\Pi_0^\perp\circ f\Pi_0,\\
    \label{eq:fD2}
    \fD\bracketNu 2(f)&=\frac{1}{2}|D|^{1/2}\Pi_0^\perp\circ fD^{-1}|D|^{1/2}\Pi_0^\perp +\frac{1}{2}D^{-1}|D|^{1/2}\Pi_0^\perp\circ f|D|^{1/2}\Pi_0^\perp+\Pi_0\circ f\Pi_0.
\end{align}
The above formula of $ \fD\bracketNu 2(f)$ implies that $\Pi_0\fD\bracketNu 2(f)g=0$ if $\Pi_0g=0$. Consequently, we obtain
\begin{equation}
    \label{eq:local_L2_vecV}
    \begin{aligned}
        \L\bracketNu{2}
        =\begin{bmatrix}
            i\signp(D)\fD\bracketNu{2}(p)&0\\
            -\fD\bracketNu{2}(\frac{1+q}{\zeta'})&\fD\bracketNu{2}(p) i\signp(D)
        \end{bmatrix}: (\mathring{H}^1)^2\to (\rL^2)^2.
    \end{aligned}
\end{equation}
(2) Suppose that $\vecV=\begin{bmatrix} v_1 \\ v_2\end{bmatrix}$ satisfies $\Pi_0\vecV=0$. Noting that $\signp(D)g=\sign(D)g$ if $\Pi_0g=0$,   \eqref{form:LNu0} gives
 \bq\label{form:LNu0:2}
    \L\bracketNu{0}\vecV=\begin{bmatrix}
        i\sign(D)\Big(|D|^{1/2}\circ p|D|^{1/2}-\frac{\gamma}{2}\Big)v_1+v_2\\
        \Big(-|D|^{1/2}\circ \frac{1+q}{\zeta'}|D|^{1/2}-\frac{\gamma^2}{4}\Big)v_1+\Big(|D|^{1/2}\circ p|D|^{1/2}-\frac{\gamma}{2}\Big) i\sign(D)v_2
    \end{bmatrix}.
    \eq
We recall from Lemma \ref{lemm:pq} that $p$ and $\frac{1+q}{\zeta'}$ are even functions. On the other hand, by Lemma \ref{lemma:commute_rt}, for any even function $f(x)$, $|D|^{1/2}\circ f|D|^{1/2}$ commutes with $\rt$, and $i\sign(D)$ anti-commutes with $\rt$. Therefore, 
\begin{align*}
    \L\bracketNu{0}\Rt\vecV
    &=\begin{bmatrix}
        i\sign(D)\Big(|D|^{1/2}\circ p|D|^{1/2}-\frac{\gamma}{2}\Big)(-\rt v_1)+\rt v_2\\
        \Big( -|D|^{1/2}\circ \frac{1+q}{\zeta'}|D|^{1/2}-\frac{\gamma^2}{4}\Big)(-\rt v_1)+\Big(|D|^{1/2}\circ p|D|^{1/2}-\frac{\gamma}{2}\Big) i\sign(D)\rt v_2
    \end{bmatrix}\\
    &=\begin{bmatrix}
       \rt i\sign(D)\Big(|D|^{1/2}\circ p|D|^{1/2}-\frac{\gamma}{2}\Big)v_1+\rt v_2\\
        -\rt \Big( -|D|^{1/2}\circ \frac{1+q}{\zeta'}|D|^{1/2}-\frac{\gamma^2}{4}\Big)v_1-\rt \Big(|D|^{1/2}\circ p|D|^{1/2}-\frac{\gamma}{2}\Big) i\sign(D) v_2
    \end{bmatrix}\\
    &=-\Rt\L\bracketNu{0}\vecV,
\end{align*} 
proving \eqref{eq:parity_L0_switch}. 

Regarding  $ P\bracketNu{0}$, we define for each $\ld \in \Gamma$ the vector $\vecU$ given by $\Rt\vecU=(\L\bracketNu{0}-\lambda)^{-1}\Rt \vecV$. From the above proof of (1), we know that $(\L\bracketNu{0}-\lambda)^{-1}: (\rL^2)^2\to (\mathring{H}^1)^2$. Hence, $\Pi_0 \vecU=0$ since $\Pi_0 \Rt \vecV=0$. Then applying  \eqref{eq:parity_L0_switch} gives 
\[
\Rt \vecV=(\L\bracketNu{0}-\lambda)\Rt \vecU=\Rt(-\L\bracketNu{0}-\lambda)\vecU. 
\]
It follows that
\[
\vecU=-(\L\bracketNu{0}+\lambda)^{-1}\vecV,
\]
whence
\[
(\L\bracketNu{0}-\lambda)^{-1}\Rt \vecV=-\Rt(\L\bracketNu{0}+\lambda)^{-1}\vecV.
\]
Since the contour $\Gamma$ is a circle centered at origin, we deduce  
\begin{align*}
    P\bracketNu{0}\Rt\vecV
    &=-\frac{1}{2\pi i}\oint_\Gamma(\L\bracketNu{0}-\lambda)^{-1}\Rt \vecV d\lambda\\
    &=-\frac{1}{2\pi i}\oint_\Gamma-\Rt(\L\bracketNu{0}+\lambda)^{-1}\vecV d\lambda\\
    &\xlongequal{\lambda\to -\lambda}-\frac{1}{2\pi i}\oint_\Gamma-\Rt(\L\bracketNu{0}-\lambda)^{-1}\vecV (-d\lambda)\\
    &=\Rt P\bracketNu{0}\vecV.
\end{align*}
This proves \eqref{eq:parity_P0_preserve}, from which  \eqref{eq:parity_cK0_preserve} follows. 

Applying  Lemma \ref{lemma:commute_rt}, we find that  $\fD\bracketNu{2}$ (given by \eqref{eq:fD2}) satisfies $\fD\bracketNu{2}\rt v_j=-\rt \fD\bracketNu{2}v_j$ and $\sign(D)$   anti-commute with $\rt$.  Moreover, since $\Pi_0 \Rt\vecV=0$, we have $\Pi_0\fD\bracketNu{2}(f)\Rt\vecV=0$ for any even function $f: \T\to \C$. Hence,  when calculating  $  \L\bracketNu{2}\Rt\vecV$ using \eqref{eq:local_L2_vecV}, $\signp(D)$ can be replaced by $\sign(D)$. Thus we obtain 
\begin{align*}
    \L\bracketNu{2}\Rt\vecV
    &=\begin{bmatrix}
            i\sign(D)\fD\bracketNu{2}(p)(-\rt v_1)\\
             -\fD\bracketNu{2}(\frac{1+q}{\zeta'})(-\rt v_1)+\fD\bracketNu{2}(p) i\sign(D)\rt v_2
        \end{bmatrix}\\
        &=\begin{bmatrix}
            -\rt i\sign(D)\fD\bracketNu{2}(p)v_1\\
             -\rt \fD\bracketNu{2}(\frac{1+q}{\zeta'})v_1+\rt \fD\bracketNu{2}(p) i\sign(D)v_2
        \end{bmatrix}\\
        &=\Rt\L\bracketNu{2}\vecV,
\end{align*}
which completes the proof of \eqref{eq:parity_L2_preserve}.
\end{proof}
As for $\L\bracketNu{1}$, we have
\begin{lemm}
    \label{lemma:parity_L1}
     (1)  $\L\bracketNu{1}: (\mathring{H}^1)^2\to \C^2$.

    (2) If  $\vecV=\evenVec$, then $\L\bracketNu{1}\vecV=0$.

    (3) If $\vecU_0\in \C^2$, then $\L\bracketNu{1}\vecU_0=\evenVec$.
\end{lemm}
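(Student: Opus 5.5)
We compute $\L\bracketNu{1}$ explicitly and read off all three claims from its formula. In $\L^+_{\nu,\eps}$ the dependence on $\nu$ enters only through the operators $\fD_\nu(p)=S_\nu\circ pS_\nu$ and $\fD_\nu(\frac{1+q}{\zeta'})$; the constants $\frac{\gamma}{2}$, $\frac{\gamma^2}{4}$, the entry $1$ and $i\signp(D)$ are independent of $\nu$. Hence, by \eqref{eq:template_fD},
\begin{equation*}
\L\bracketNu{1}=\begin{bmatrix}
i\signp(D)\fD\bracketNu{1}(p)&0\\
-\fD\bracketNu{1}(\frac{1+q}{\zeta'})&\fD\bracketNu{1}(p)\, i\signp(D)
\end{bmatrix},
\end{equation*}
with $\fD\bracketNu{1}(f)=\Pi_0\circ f|D|^{1/2}\Pi_0^\perp+|D|^{1/2}\Pi_0^\perp\circ f\Pi_0$ from \eqref{eq:fD1}.

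For (1), let $\vecV=[v_1,v_2]^T$ with $\Pi_0 v_j=0$. The second term of $\fD\bracketNu{1}(f)$ kills $v_1$. In the $(2,2)$ entry, $i\signp(D)v_2=i\sign(D)v_2$ still has zero mean, so the second term vanishes there as well. What remains is
\begin{equation*}
\fD\bracketNu{1}(f)v_1=\Pi_0(f|D|^{1/2}v_1),\qquad \fD\bracketNu{1}(p)\,i\sign(D)v_2=\Pi_0\big(p\,|D|^{1/2}i\sign(D)v_2\big).
\end{equation*}
Both are constants. In the first row, $i\signp(D)$ applied to a constant is $i$ times that constant. So $\L\bracketNu{1}\vecV\in\C^2$.

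For (2), take $v_1$ odd and $v_2$ even with mean zero, and use Lemma \ref{lemma:commute_rt}. Since $|D|^{1/2}$ has an even symbol, $|D|^{1/2}v_1$ is odd. Since $p$ and $\frac{1+q}{\zeta'}$ are even (Lemma \ref{lemm:pq}), the products $p|D|^{1/2}v_1$ and $\frac{1+q}{\zeta'}|D|^{1/2}v_1$ are odd, and their means vanish. The symbol $i\sign(k)|k|^{1/2}$ is odd, so $|D|^{1/2}i\sign(D)v_2$ is odd, hence $p|D|^{1/2}i\sign(D)v_2$ is odd and has mean zero. All three constants computed in (1) are therefore zero, and $\L\bracketNu{1}\vecV=0$.

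For (3), let $\vecU_0=[a,b]^T$ be constant. Now only the second term $|D|^{1/2}\Pi_0^\perp\circ f\Pi_0$ survives, giving
\begin{equation*}
\L\bracketNu{1}\vecU_0=\Big[\,i\sign(D)|D|^{1/2}\Pi_0^\perp(pa),\ -|D|^{1/2}\Pi_0^\perp\big(\tfrac{1+q}{\zeta'}a\big)+|D|^{1/2}\Pi_0^\perp\big(p\, i\signp(D)b\big)\Big]^T,
\end{equation*}
where $\signp(D)b=b$. In the first component, $pa$ is even, $\Pi_0^\perp(pa)$ is even with mean zero, and applying the odd symbol $i\sign(k)|k|^{1/2}$ produces an odd function. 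In the second component, $\Pi_0^\perp$ of an even function is $even_0$, and the even symbol $|k|^{1/2}$ keeps it $even_0$. So $\L\bracketNu{1}\vecU_0=\evenVec$.

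The only delicate point is bookkeeping with $\signp$ versus $\sign$ at the zero mode. We must check that $\signp(0)=1$ contributes only in the places noted above, namely on the constants in the first row in (1) and in the $(2,2)$ entry in (3), and that it does not alter the parity conclusions.
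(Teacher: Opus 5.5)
Your proof is correct and is essentially the paper's argument. You extract $\L\bracketNu{1}$ from the $\nu^1$-coefficient $\fD\bracketNu{1}$ of $S_\nu\circ fS_\nu$, note that on mean-zero inputs only the $\Pi_0$-term survives (so the output is constant), and use the evenness of $p$ and $\frac{1+q}{\zeta'}$ with Lemma \ref{lemma:commute_rt} for parts (2) and (3), tracking $\signp$ versus $\sign$ at the zero mode exactly as the paper does; your closing caveat is therefore already settled in the body of your argument.
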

\begin{proof}
   (1)  From  \eqref{eq:fD1}  we have
\bq\label{LNu1}
        \L\bracketNu{1}
        =\begin{bmatrix}
            i\signp(D)\Big(\Pi_0\circ p|D|^{1/2}\Pi_0^\perp+|D|^{1/2}\Pi_0^\perp\circ p\Pi_0\Big)&0\\
            -\Big(\Pi_0\circ \frac{1+q}{\zeta'}|D|^{1/2}\Pi_0^\perp+|D|^{1/2}\Pi_0^\perp\circ \frac{1+q}{\zeta'}\Pi_0\Big)&\Big(\Pi_0\circ p|D|^{1/2}\Pi_0^\perp+|D|^{1/2}\Pi_0^\perp\circ p\Pi_0\Big)i\signp(D)
        \end{bmatrix}.
     \eq
For  $\vecV=\begin{bmatrix} v_1(x)\\ v_2(x)\end{bmatrix}\in (\mathring{H}^1)^2$, we have $\Pi_0 v_j=0$, and hence 
      \bq\label{L1V}
         \L\bracketNu{1}\vecV=\begin{bmatrix}
            i\signp(D)\Pi_0\circ p|D|^{1/2}v_1\\
            -\Pi_0\circ \frac{1+q}{\zeta'}|D|^{1/2}v_1+\Pi_0\circ p|D|^{1/2}i\signp(D)v_2
        \end{bmatrix}\in \C^2
\eq
    thanks to the presence of $\Pi_0$. 
    
(2)  Suppose that $\vecV=\evenVec$, i.e. $v_1(x)$ is odd and $v_2(x)$ is even and mean-free. Then $ \L\bracketNu{1}\vecV$ is given by \eqref{L1V} since $\Pi_0\vecV=0$. By Lemma \ref{lemma:commute_rt}, $|D|^{1/2}v_1$ and $|D|^{1/2}\signp(D)v_2=|D|^{1/2}\sign(D)v_2$ are odd functions. Recalling that $p$ and $\frac{1+q}{\zeta'}$ are even functions, we conclude from the formula \eqref{L1V} that $ \L\bracketNu{1}\vecV=0$. 

(3)  For $\vecU_0=\begin{bmatrix} a_1\\ a_2\end{bmatrix}\in \C^2$, \eqref{LNu1} gives 
\begin{align*}
    \L\bracketNu{1}\vecU_0
        &=\begin{bmatrix}
            a_1 i\signp(D)|D|^{1/2}p\\
            -a_1|D|^{1/2}\frac{1+q}{\zeta'}+a_2i|D|^{1/2}p
        \end{bmatrix}=\begin{bmatrix}
            a_1 i\sign(D)|D|^{1/2}p\\
            -a_1|D|^{1/2}\frac{1+q}{\zeta'}+a_2i|D|^{1/2}p
        \end{bmatrix}.
    \end{align*}
   Appealing to  Lemma \ref{lemma:commute_rt} again yields  $\L\bracketNu{1}\vecU_0=\evenVec$.
\end{proof}
\subsubsection{The  $\nu^1$-coefficient of $\Lmat$}
From  \eqref{eq:Lmat_in_sC}, we know that $\Lmat\bracketNu{1}$ has the form 
\[
\Lmat\bracketNu{1}=\begin{bmatrix}
    0&0&(\Lmat\bracketNu{1})_{13}\\
    0&0&(\Lmat\bracketNu{1})_{23}\\
    (\Lmat\bracketNu{1})_{31}& (\Lmat\bracketNu{1})_{32}&0
\end{bmatrix},
\]
Invoking the symmetry  \eqref{eq:symm_Lmat},  we only need to determine $(\Lmat\bracketNu{1})_{13}$ and $(\Lmat\bracketNu{1})_{23}$. To this end, we first recall the form \eqref{def:Lmat} of $\Lmat$ to have
\begin{align*}
    (\Lmat\bracketNu{1})_{13}+(\Lmat\bracketNu{1})_{23}&=[\nu^1]\Big(\frac{i}{2\pi \beta_3} \left( \mathcal{B}_{\nu,\eps} U_1^{\nu,\eps}, U_3^{\nu,\eps} \right)+\frac{i}{2\pi \beta_3} \left( \mathcal{B}_{\nu,\eps} U_2^{\nu,\eps}, U_3^{\nu,\eps} \right)\Big)\\
    &=\frac{i}{2\pi \beta_3} \Big([\nu^1]\left( \mathcal{B}_{\nu,\eps} V_+^{\nu,\eps}, U_3^{\nu,\eps} \right)\Big)\\
    &=\frac{i}{2\pi \beta_3} 
    \Big((\B\bracketNu{1}V_+\bracketNu{0},U_3\bracketNu{0})
    +(\B\bracketNu{0}V_+\bracketNu{1},U_3\bracketNu{0})
    +(\B\bracketNu{0}V_+\bracketNu{0},U_3\bracketNu{1})\Big)\\
    &=\frac{i}{2\pi \beta_3} 
    \Big((-J\L\bracketNu{1}V_+\bracketNu{0},U_3\bracketNu{0})
    +(V_+\bracketNu{1},-J\L\bracketNu{0}U_3\bracketNu{0})
    +(-J\L\bracketNu{0}V_+\bracketNu{0},U_3\bracketNu{1})\Big).
\end{align*}
But  $\L\bracketNu{0}V_+\bracketNu{0}= \L\bracketNu{0}U_3\bracketNu{0}=0$ by \eqref{eq:LVp_is_0} and  \eqref{eq:LU3_is_0}, so that 
\[
  (\Lmat\bracketNu{1})_{13}+(\Lmat\bracketNu{1})_{23}=\frac{i}{2\pi \beta_3}(-J\L\bracketNu{1}V_+\bracketNu{0},U_3\bracketNu{0}).
  \]
Now we recall from \eqref{basis_Vpm} and \eqref{form:Vpm:cK}  that $V_+\bracketNu{0}=\cK\bracketNu{0}V_+$ and $V_+=\evenVec$. Hence, combining the properties \eqref{mean:cK0} and \eqref{eq:parity_cK0_preserve} of $\cK\bracketNu{0}$ yields $V_+\bracketNu{0}=\evenVec$. Then Lemma \ref{lemma:parity_L1} (2) implies   
\bq
\label{eq:L1Vp_is_0}
\L\bracketNu{1}V_+\bracketNu{0}=0,
\eq
 and thus 
\begin{equation}
    (\Lmat\bracketNu{1})_{13}+(\Lmat\bracketNu{1})_{23}=0.
\end{equation}
Combining this with  \eqref{eq:Lmat_expand_in_nu_eps}, \eqref{eq:Lmat_expand_in_nu_eps:2} and \eqref{eq:symm_Lmat}, we conclude that $ \Lmat\bracketNu{1}$ has the form
\begin{equation}
\label{eq:Lmat_nu_1_final}
    \Lmat\bracketNu{1}=\begin{bmatrix}
        0&0&-\frac{i \left(c_0^2+1\right)}{2 c_0}\eps-i\eps^3b_2(\eps)\\
        0&0&\frac{i \left(c_0^2+1\right)}{2 c_0}\eps+i\eps^3b_2(\eps)\\
        (\Lmat\bracketNu{1})_{31}& (\Lmat\bracketNu{1})_{32}&0
    \end{bmatrix},\quad (\Lmat\bracketNu{1})_{31}=\frac{\beta_3}{\beta_1}(\Lmat\bracketNu{1})_{13},\quad (\Lmat\bracketNu{1})_{32}=\frac{\beta_3}{\beta_2}(\Lmat\bracketNu{1})_{23}
\end{equation}
for some $b_2(\eps)\in\anaR{\eps}$.
\subsubsection{The $\nu^2$-coefficient of $\Lmat$}
From  \eqref{eq:Lmat_in_sC}, we have 
\[
\Lmat\bracketNu{2}=\begin{bmatrix}
    *&*&0\\
    *&*&0\\
    0&0&*
\end{bmatrix}.
\]
Since $\B_{\nu,\eps}$ is self-adjoint and $\L\bracketNu{0}V_+\bracketNu{0}=\L\bracketNu{1}V_+\bracketNu{0}=0$ by \eqref{eq:LVp_is_0} and \eqref{eq:L1Vp_is_0}, we have
\bq\label{Nu2BVp}
\begin{aligned}
    [\nu^2](\B_{\nu,\eps}V_+^{\nu,\eps},V_+^{\nu,\eps})&=
   2\RE (\B\bracketNu{0}V_+\bracketNu{0},V_+\bracketNu{2})+(\B\bracketNu{0}V_+\bracketNu{1},V_+\bracketNu{1})+2\RE(\B\bracketNu{1}V_+\bracketNu{0},V_+\bracketNu{1})+(\B\bracketNu{2}V_+\bracketNu{0},V_+\bracketNu{0})\\
   &=2\RE (-J\L\bracketNu{0}V_+\bracketNu{0},V_+\bracketNu{2})+(-J\L\bracketNu{0}V_+\bracketNu{1},V_+\bracketNu{1})+2\RE(-J\L\bracketNu{1}V_+\bracketNu{0},V_+\bracketNu{1})+(\B\bracketNu{2}V_+\bracketNu{0},V_+\bracketNu{0})\\
   &=(-J\L\bracketNu{0}V_+\bracketNu{1},V_+\bracketNu{1})+(\B\bracketNu{2}V_+\bracketNu{0},V_+\bracketNu{0}). 
\end{aligned}
\eq
Because $\Lmat_{\nu,\eps}$ is the matrix representation  of $ \L_{\nu,\eps}^+$  on the basis $\cV_{\nu,\eps}$, we have 
\begin{align*}
    \L_{\nu,\eps}^+V_+^{\nu,\eps}&=\L_{\nu,\eps}^+U_1^{\nu,\eps}+\L_{\nu,\eps}^+U_2^{\nu,\eps}=\sum_{k=1}^3 \Big((\Lmat_{\nu,\eps})_{1k}+(\Lmat_{\nu,\eps})_{2k}\Big)U_k^{\nu,\eps},
\end{align*}
so that 
\begin{align*}
    [\nu^1](\L_{\nu,\eps}^+V_+^{\nu,\eps})&=\sum_{k=1}^3\Big((\Lmat\bracketNu{0})_{1k}+(\Lmat\bracketNu{0})_{2k}\Big)U_k\bracketNu{1}
    +\sum_{k=1}^3\Big((\Lmat\bracketNu{1})_{1k}+(\Lmat\bracketNu{1})_{2k}\Big)U_k\bracketNu{0}.
\end{align*}
From the form  \eqref{eq:Lmat_nu_0_final} of $\Lmat\bracketNu{0}$ and the form  \eqref{eq:Lmat_nu_1_final} of $\Lmat\bracketNu{1}$, we see that 
\[
(\Lmat\bracketNu{0})_{1k}+(\Lmat\bracketNu{0})_{2k}=(\Lmat\bracketNu{1})_{1k}+(\Lmat\bracketNu{1})_{2k}=0,\quad \forall k\in\{1, 2, 3\}. 
\]
It follows that $ [\nu^1](\L_{\nu,\eps}^+V_+^{\nu,\eps})=0$. Combing this with \eqref{eq:L1Vp_is_0} yields
\begin{equation}
    \label{eq:L0_V+1_is_0}
    \L\bracketNu{0}V_+\bracketNu{1}=[\nu^1](\L_{\nu,\eps}^+V_+^{\nu,\eps})-\L\bracketNu{1}V_+\bracketNu{0}=0.
\end{equation}
Recalling that $V_+\bracketNu{0}=\evenVec$, we can use \eqref{eq:parity_L2_preserve} and the fact that $J\Rt\vecV=-\Rt J\vecV$ to have $J\L\bracketNu{2}V_+\bracketNu{0}=\oddVec$. This implies 
\begin{equation}
\label{eq:B2_V+0_is_0}
    (\B\bracketNu{2}V_+\bracketNu{0},V_+\bracketNu{0})=(-J\L\bracketNu{2}V_+\bracketNu{0},V_+\bracketNu{0})=0.
\end{equation}
It follows from \eqref{Nu2BVp}, \eqref{eq:L0_V+1_is_0}, and \eqref{eq:B2_V+0_is_0}  that 
\begin{equation}\label{Nu2:BVp}
    [\nu^2](\B_{\nu,\eps}V_+^{\nu,\eps},V_+^{\nu,\eps})=0.
\end{equation}
 We recall from Proposition \ref{prop:structure_Lmat}  that  $(\B_{\nu,\eps}U_1^{\nu,\eps},U_2^{\nu,\eps})\in\R$ and $\beta_1=-\beta_2$.  Consequently, we have
\begin{align*}
    0&=[\nu^2](\B_{\nu,\eps}V_+^{\nu,\eps},V_+^{\nu,\eps})\\
    &=[\nu^2]\Big[(\B_{\nu,\eps}U_1^{\nu,\eps},U_1^{\nu,\eps})+(\B_{\nu,\eps}U_2^{\nu,\eps},U_2^{\nu,\eps})+2(\B_{\nu,\eps}U_1^{\nu,\eps},U_2^{\nu,\eps})\Big]\\
    &=\frac{2\pi \beta_2}{i}[\nu^2]\Big(-(\Lmat_{\nu,\eps})_{11}+(\Lmat_{\nu,\eps})_{22}+ 2(\Lmat_{\nu,\eps})_{12}\Big),
\end{align*}
whence
\bq
(\Lmat\bracketNu{2})_{11}=(\Lmat\bracketNu{2})_{22}+2(\Lmat\bracketNu{2})_{12}.
\eq
Combining this with \eqref{eq:Lmat_expand_in_nu_eps}, \eqref{eq:Lmat_expand_in_nu_eps:2} and  \eqref{eq:symm_Lmat}, we conclude
\begin{equation}
    \label{eq:Lmat_nu_2_final}
    \Lmat\bracketNu{2}=\begin{bmatrix}
        \frac{ic_0^3}{c_0^2+1}+\eps^2i\big[b_3(\eps)+b_4(\eps)\big] & \eps^2ib_4(\eps) &0\\
        -\eps^2ib_4(\eps) &  \frac{ic_0^3}{c_0^2+1}+\eps^2i\big[b_3(\eps)-b_4(\eps)\big] &0\\
        0&0&\frac{ic_0^3}{c_0^2-1}+O(\eps^2)
    \end{bmatrix}
\end{equation}
for some $b_3(\eps),b_4(\eps)\in\anaR{\eps}$.
\subsection{Final form of $\Lmat_{\nu,\eps}$}
\def\highPhantom{\vphantom{\begin{bmatrix} 0 \\ 0 \\ 0 \\ 0 \\ 0 \end{bmatrix}}}
\def\highDelim#1{\left#1\highPhantom\right.}
We substitute \eqref{eq:Lmat_nu_0_final}, \eqref{eq:Lmat_nu_1_final}, and \eqref{eq:Lmat_nu_2_final} in  \eqref{eq:Lmat_expand_in_nu_eps} to arrive at 
\begin{prop}\label{prop:finalmatrix}    Let $\gamma \ne 0$. There exist $\eps_0=\eps_0(\gamma)>0$ and $\nu_0=\nu_0(\gamma)>0$ such that for all $|\eps|<\eps_0$ and $|\nu|<\nu_0$, we have
\begin{equation}
 \label{eq:Lmat_final}
    \begin{aligned}
       \Lmat_{\nu,\eps}&= \begin{bmatrix}
        -\frac{i \left(3 \left(c_0^6-c_0^4+c_0^2\right)+1\right)}{8 c_0^5}\eps^2-ib_1(\eps)\eps^3&-\frac{i \left(3 \left(c_0^6-c_0^4+c_0^2\right)+1\right)}{8 c_0^5}\eps^2-ib_1(\eps)\eps^3&0\\
        \frac{i \left(3 \left(c_0^6-c_0^4+c_0^2\right)+1\right)}{8 c_0^5}\eps^2+ib_1(\eps)\eps^3&\frac{i \left(3 \left(c_0^6-c_0^4+c_0^2\right)+1\right)}{8 c_0^5}\eps^2+ib_1(\eps)\eps^3&0\\
        0&0&0
    \end{bmatrix}\\
   & +\mu \begin{bmatrix}
        \frac{ic_0^3}{c_0^2+1}+\eps^2i\big[b_3(\eps)+b_4(\eps)\big] & \eps^2ib_4(\eps) &0\\
        -\eps^2ib_4(\eps) &  \frac{ic_0^3}{c_0^2+1}+\eps^2i\big[b_3(\eps)-b_4(\eps)\big] &0\\
        0&0&\frac{ic_0^3}{c_0^2-1}+\eps^2ib_5(\eps)
    \end{bmatrix}\\
   & +\mu^2\begin{bmatrix}
        \frac{i c_0^3}{\left(c_0^2+1\right)^3}+\eps ib_6(\eps) & \eps ib_7(\eps) & 0 \\
 * & -\frac{i c_0^3}{\left(c_0^2+1\right)^3}+\eps ib_8(\eps) & 0 \\
 0 & 0 & -\frac{i c_0^3}{\left(c_0^2-1\right)^3}+\eps ib_9(\eps) \\
    \end{bmatrix}\\
    &+\mu^3\begin{bmatrix}
        ia_1(\mu,\eps)&ia_2(\mu,\eps)&0\\
        *&ia_3(\mu,\eps)&0\\
        0&0&ia_6(\mu,\eps)
    \end{bmatrix}\\
    &\hspace{-2em}+\nu\highDelim{(}
\begin{bmatrix}
        0&0&-\frac{i \left(c_0^2+1\right)}{2 c_0}\eps-i\eps^3b_2(\eps)\\
        0&0&\frac{i \left(c_0^2+1\right)}{2 c_0}\eps+i\eps^3b_2(\eps)\\
        *&*&0
    \end{bmatrix}+\mu\begin{bmatrix}
         0 & 0 & -\eps\frac{i \left(c_0^6-3 c_0^4-c_0^2-1\right)}{4 c_0 \left(c_0^4-1\right)}+\eps^2ib_{10}(\eps) \\
 0 & 0 & -\eps\frac{i \left(c_0^6-3 c_0^4-c_0^2-1\right)}{4 c_0 \left(c_0^4-1\right)}+\eps^2ib_{11}(\eps) \\
 *&* & 0
    \end{bmatrix}\\
    &+\mu^2\begin{bmatrix}
        0&0&\eps i b_{12}(\eps)\\
        0&0&\eps i b_{13}(\eps)\\
        *&*&0
    \end{bmatrix}+\mu^3\begin{bmatrix}
        0&0&i a_4(\mu,\eps)\\
        0&0& i a_5(\mu,\eps)\\
        *&*&0
    \end{bmatrix}
    \highDelim{)},
\end{aligned}
\end{equation}
where $\mu=\nu^2$ and the  $*$ terms are determined using the symmetry \eqref{eq:symm_Lmat}, and 
\[
a_1(\mu,\eps),\dots,a_6(\mu,\eps)\in\anaR{\mu,\eps},\quad b_1(\eps),\dots,b_{13}(\eps)\in\anaR{\eps}.
\]
\end{prop}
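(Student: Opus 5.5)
The plan is to assemble \eqref{eq:Lmat_final} from the structural information already obtained, organized by parity in $\nu$. By \eqref{eq:Lmat_in_sC}, the $2\times2$ upper-left block and the $(3,3)$ entry of $\Lmat_{\nu,\eps}$ are analytic functions of $(\mu,\eps)$, with $\mu=\nu^2$. The entries $(1,3),(2,3),(3,1),(3,2)$ are $\nu$ times analytic functions of $(\mu,\eps)$. All other entries vanish identically in these positions. Hence we may write $\Lmat_{\nu,\eps}=M_{\rm e}(\mu,\eps)+\nu M_{\rm o}(\mu,\eps)$. Here $M_{\rm e}$ is supported on the block-diagonal positions and $M_{\rm o}$ on the off-block positions, and both are analytic in $(\mu,\eps)$. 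Expanding each in powers of $\mu$ as $\sum_{m\ge0}\mu^m M^{(m)}(\eps)$ produces the displayed $\mu^0,\mu^1,\mu^2$ coefficients together with a $\mu^3$ analytic remainder. This remainder gives $ia_1,\dots,ia_6$, which are purely imaginary by Proposition \ref{prop:structure_Lmat}(2).

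For the even part, $M^{(0)}_{\rm e}=\Lmat\bracketNu0$ is exactly \eqref{eq:Lmat_nu_0_final}. For $M^{(1)}_{\rm e}=\Lmat\bracketNu2$ we use \eqref{eq:Lmat_nu_2_final}. Its $(3,3)$ entry is analytic in $\eps$. Its value at $\eps=0$ is read off from \eqref{eq:Lmat_expand_in_nu_eps}, and its $\eps^1$ coefficient vanishes because $\Lmat^{2,1}=0$ by \eqref{eq:Lmat_expand_in_nu_eps:2}. It therefore equals $\frac{ic_0^3}{c_0^2-1}+\eps^2 ib_5(\eps)$. For $M^{(2)}_{\rm e}=\Lmat\bracketNu4$ only the $\eps^0$ term is known, from the $\nu^4$ matrix in \eqref{eq:Lmat_expand_in_nu_eps}. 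We write the rest as $\eps\,ib_j(\eps)$. The $(1,2)$ entry at $\eps=0$ is zero, and the $(2,1)$ entry is fixed by \eqref{eq:symm_Lmat}.

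For the odd part, $M^{(0)}_{\rm o}=\Lmat\bracketNu1$ is \eqref{eq:Lmat_nu_1_final}. For $M^{(1)}_{\rm o}=\Lmat\bracketNu3$ the expansion \eqref{eq:Lmat_expand_in_nu_eps} gives $\Lmat^{3,0}=0$ (by Lemma \ref{lemm:Lnu0}) and supplies the $\nu^3\eps$ coefficients. The remainder is $O(\eps^2)$, which yields $b_{10},b_{11}$. For $M^{(2)}_{\rm o}=\Lmat\bracketNu5$ we only use $\Lmat^{5,0}=0$ from Lemma \ref{lemm:Lnu0}, which gives the form $\eps\,ib_{12},\eps\,ib_{13}$. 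In every case the $*$ entries are determined by \eqref{eq:symm_Lmat}. Realness of the $b_j$ and $a_j$ follows from the pure imaginarity in Proposition \ref{prop:structure_Lmat}(2).

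The main point to check carefully is consistency of the error bookkeeping. We must verify that each coefficient $[\nu^j\eps^k]$ used from \eqref{eq:Lmat_expand_in_nu_eps} lies within the computed range, that is, outside $O(\eps^3,\nu^2\eps^2,\nu^4\eps,\nu^6)$. We must also verify that the exact relations for $j=0,1,2$ are compatible with that expansion. Both checks are immediate from the index ranges in \eqref{expansion:Lmat_formal}. Finally, $\eps_0,\nu_0$ are chosen small enough that Kato's transformation and all the cited propositions apply.
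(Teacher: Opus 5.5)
Your proposal is correct and matches the paper's argument: Proposition~\ref{prop:finalmatrix} is obtained by inserting the exact forms \eqref{eq:Lmat_nu_0_final}, \eqref{eq:Lmat_nu_1_final}, \eqref{eq:Lmat_nu_2_final} into the finite expansion \eqref{eq:Lmat_expand_in_nu_eps}. As you do, the paper uses \eqref{eq:Lmat_in_sC} to separate even and odd powers of $\nu$ and \eqref{eq:symm_Lmat} for the starred entries. Your explicit bookkeeping of which coefficients are known (for example $\Lmat^{2,1}=0$ for the $(3,3)$ entry, and $\Lmat^{3,0}=\Lmat^{5,0}=0$ together with $\Lmat^{3,1}$ for the odd part) simply spells out what the paper leaves implicit.
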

\begin{rema} We recall from \eqref{Nu2:BVp} that $ [\nu^2](\B_{\nu,\eps}V_+^{\nu,\eps},V_+^{\nu,\eps})=0$. It can be proven (but more difficult!) that  $[\nu^2](\B_{\nu,\eps}V_-^{\nu,\eps},V_-^{\nu,\eps})=0$ which then implies $b_4(\eps)=0$. It turns out that the vanishing of $b_4(\eps)$ will not be needed for our spectral analysis of $\Lmat$ in the next section. 
  \end{rema}
\section{Spectral analysis}
\def\poly{\mathtt{p}}
\def\polyNew{\mathtt{p}_\mathtt{n}}
\def\phparen#1{\phi^{(#1)}}
\def\deri{\mathtt{d}}
Having the expansion \eqref{eq:Lmat_final} of the $3\times 3$ matrix $\Lmat_{\nu,\eps}$, we proceed to analyze its spectrum. For the Benjamin--Feir instability of irrotational Stokes waves,  \cite{Berti_Full} obtained a  $4\times 4$ matrix and analyzed it by a  block diagonalization procedure inspired by KAM theory. In this section,  we  develop a different, direct method to  find all roots of the cubic characteristic polynomial $\poly(\lambda;\mu,\eps)$ of $\Lmat_{\nu,\eps}$. After noticing that the roots $\ld$ are proportional to  $\mu$, we consider the rescaling polynomial $ \polyNew(f;\mu,\eps)$ \eqref{def:polyNew} whose coefficients are analytic in $(\mu, \eps)$.  By explicit calculations, $\polyNew(f; 0, 0)$ has a simple root $\phi^{(0)}\in i\Rr$ and a double root $\tilde \phi$. We then perform an analytic implicit function argument to show that the simple root $\phi^{(0)}$ continues to a simple root $\phi_*(\mu, \eps)\in i\Rr$ as $(\mu, \eps)$ is perturbed from $(0, 0)$. Moreover, by a Newton-like method, we obtain the second-order expansion in $(\mu, \eps)$ of $\phi_*(\mu, \eps)$. After that, the two remaining roots $\phi_j(\mu, \eps)$, $j=1,2$, of $\polyNew$ are obtained as roots of a quadratic polynomial whose coefficients depend on $\phi_*$, $[f^1]\polyNew$, and $[f^2]\polyNew$. Using symbolic computation in Mathematica, we can expand the discriminant $\Delta$ up to second order and deduce that $\Delta<0$ when $\gamma>\frac{2}{\sqrt{3}}$. Finally, when $\gamma<\frac{2}{\sqrt{3}}$, another application of the analytic implicit function allows us to determine a critical value $\underline{\mu}(\eps)$ across which  $\Delta$ changes sign.
\subsection{Technical note: Wolfram Mathematica}
The Mathematica codes in this section are selected from the companion file \verb|brute_force_eigenvalues.nb|. We implement a function that displays the coefficients of a polynomial \texttt{poly} in two variables \texttt{var1} and \texttt{var2}, in tabular form. If an entry contains more than $500$ characters, we replace it with $``\dots"$ in the output. 
\begin{mmacode}
  \texttt{\lstinline|showCoefficients[poly_, {var1_, var2_}, cutOff_:500]|}
\end{mmacode}
 For example,
\begin{mmain}
    $\text{showCoefficients}(b \mu  \eps ^2+c \eps ^3+d \mu ^4+e+2^{10000} \mu ^6 \eps ^6+\mu ^2 \eps ,\{\eps ,\mu \})$
\end{mmain}
\begin{mmaout}
    \[
    \left(
\begin{array}{cccccccc}
 \text{} & 1 & \mu  & \mu ^2 & \mu ^3 & \mu ^4 & \mu ^5 & \mu ^6 \\
 1 & e & 0 & 0 & 0 & d & 0 & 0 \\
 \eps  & 0 & 0 & 1 & 0 & 0 & 0 & 0 \\
 \eps ^2 & 0 & b & 0 & 0 & 0 & 0 & 0 \\
 \eps ^3 & c & 0 & 0 & 0 & 0 & 0 & 0 \\
 \eps ^4 & 0 & 0 & 0 & 0 & 0 & 0 & 0 \\
 \eps ^5 & 0 & 0 & 0 & 0 & 0 & 0 & 0 \\
 \eps ^6 & 0 & 0 & 0 & 0 & 0 & 0 & \text{...} \\
\end{array}
\right)
    \]
\end{mmaout}
All the variables $b,c,d,e$ are put into the coefficients in corresponding positions, and the long coefficient $2^{10000}$ of $\eps^6\mu^6$ is replaced by $``\dots"$.

The analytic functions $a_{k}(\mu,\eps)$ and $b_k(\eps)$ in \eqref{eq:Lmat_final} are power series in $\eps$ and possibly $\mu$. However, the dependence on $\eps$ and $\mu$ is not explicitly coded in Mathematica, and the software treats them as independent variables. This causes no issue for our purpose, as our use of Mathematica is restricted to algebraic manipulations, every step of which is a polynomial in $a_k(\mu,\eps)$ and  $b_k(\eps)$. There is only one point to keep in mind when interpreting the output table: lower order terms with coefficients $a_{k}(\mu,\eps)$ and $b_k(\eps)$ may implicitly contribute to higher order terms. For example, consider the analytic function
\[
f_{example}(\mu,\eps)=\mu ^3 a_{1}(\mu,\eps)+\eps ^3 a_{2}(\mu,\eps)+5 \mu ^3 \eps ^3+3 \mu ^2 \eps ^2+1,
\]
which is displayed as 
\begin{mmaout}
    \[
    \left(
\begin{array}{ccccc}
 \text{} & 1 & \mu  & \mu ^2 & \mu ^3 \\
 1 & 1 & 0 & 0 & a_{1} \\
 \eps  & 0 & 0 & 0 & 0 \\
 \eps ^2 & 0 & 0 & 3 & 0 \\
 \eps ^3 & a_{2} & 0 & 0 & 5 \\
\end{array}
\right)
    \]
\end{mmaout}
From this table we may read that 
\[
[\mu^0\eps^0]f_{example}=1,[\mu^2\eps^2]f_{example}=3.
\]
On the other hand, the coefficient $[\mu^3\eps^3]f_{example}$ is not necessarily equal to $5$, since it may also be affected by $[\eps^3]a_1$ and $[\mu^3]a_2$. 
\subsection{Main equation}
We first construct the matrix $\Lmat_{\nu,\eps}$ in Mathematica from \eqref{eq:Lmat_final}, using the symmetry \eqref{eq:symm_Lmat}. The characteristic polynomial of $\Lmat_{\nu,\eps}$ is $\det(\lambda I-\Lmat_{\nu,\eps})$,  which is a cubic polynomial  with coefficients in $\anaC{\nu,\eps}$, where $|\nu|<\nu_0$.  It can be readily checked that for any matrix of the form \eqref{eq:Lmat_in_sC}, the coefficients of its characteristic polynomial depend on $\nu$ only through  $\mu=\nu^2$ since $\nu$ only appears as $\nu^2$. Therefore, we denote 
\begin{equation}
    \label{def:poly}
\poly(\lambda;\mu,\eps)=\det(\lambda I-\Lmat_{\nu,\eps})
\end{equation}
and extend its coefficients analytically to $\mu\in (-\mu_0, \mu_0)$, $\mu_0:=\nu_0^2$. Moreover, from Mathematica we find the reality of these coefficients: 
\begin{equation}
\label{eq:real_valued_poly}
    [\lambda^3]\poly=1,\quad [\lambda^2]\poly,[\lambda^0]\poly\in i\anaR{\mu,\eps},\quad [\lambda^1]\poly \in\anaR{\mu,\eps}.
\end{equation}
Next, we make the change of variables
\begin{equation}
    \lambda=\mu f,
\end{equation}
and define  new polynomial of $f$,
\begin{equation}
    \label{def:polyNew}
    \polyNew(f;\mu,\eps):=\frac{\poly(\mu f;\mu,\eps)}{\mu^3}.
\end{equation}
We can check using Mathematica that $\polyNew$ contains no negative powers of $\mu$, so that 
\begin{equation}
     \polyNew(f;\mu,\eps)\in\anaC{\mu,\eps}[f].
\end{equation}
Clearly \eqref{eq:real_valued_poly} also holds for $\polyNew$:
\begin{equation}
\label{eq:real_valued_polyNew}
    [f^3]\polyNew=1,\quad [f^2]\polyNew,[f^0]\polyNew\in i\anaR{\mu,\eps},\quad [f^1]\polyNew \in\anaR{\mu,\eps}.
\end{equation}
\subsection{First root of $\polyNew$}
To begin with, we let $\mu=\eps=0$ in $\polyNew$ and factorize the obtained polynomial:
\begin{equation}
    \label{def:poly0}
    \poly_0(f):=\polyNew(f;0,0)=\frac{\left(-c_0^2 f+i c_0^3+f\right) \left(i c_0^2 f+c_0^3+i f\right)^2}{\left(c_0-1\right) \left(c_0-i\right)^2 \left(c_0+i\right)^2 \left(c_0+1\right)}=\left(f-\frac{i c_0^3}{c_0^2-1}\right)\left(f-\frac{i c_0^3}{c_0^2+1}\right)^2.
\end{equation}
Thus  $\poly_0(f)$ has a simple root $\phparen{0}=\frac{i c_0^3}{c_0^2-1}$ and a double root $\wt\phi=\frac{i c_0^3}{c_0^2+1}$, and 
\begin{equation}
    \label{def:deri}
    \deri:=\p_f\poly_0(\phparen{0})=(\phparen{0}-\wt\phi)^2=-\frac{4 c_0^6}{(c_0^2-1) (c_0^2+1)^2}\ne0.
\end{equation}
Therefore, by applying the analytic implicit function theorem, we obtain
\begin{prop}
 \label{prop:IFT_near_i}   The polynomial $\polyNew(f;\mu,\eps)$ has a unique complex-valued real analytic root  $f=\phi_*(\mu,\eps)$ in a neighborhood of $(\mu,\eps)=(0,0)\in\R^2$ satisfying $\phi_*(0,0)=\phparen{0}$.
\end{prop}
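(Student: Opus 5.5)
The plan is to apply the holomorphic implicit function theorem to $F(f,\mu,\eps):=\polyNew(f;\mu,\eps)$, viewed as a function of $(f,\mu,\eps)$ near $(\phparen{0},0,0)\in\C\times\C^2$. I would carry this out in four steps.

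\textbf{Analyticity.} The coefficients of $\polyNew$ lie in $\anaC{\mu,\eps}$; this was checked after \eqref{def:polyNew}, where no negative powers of $\mu$ appear. Being convergent power series, they extend holomorphically to a complex polydisc around $(0,0)$. Since $F$ is a polynomial in $f$, it is therefore jointly holomorphic in $(f,\mu,\eps)$.

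\textbf{Nondegeneracy.} By \eqref{def:poly0} we have $F(\phparen{0},0,0)=\poly_0(\phparen{0})=0$. By \eqref{def:deri}, $\p_fF(\phparen{0},0,0)=\deri\neq 0$; this uses $\gamma\ne0$, so that $c_0\ne 1$ by \eqref{eq:relation_c0_gamma} and $\phparen{0}$ is finite and distinct from $\wt\phi$.

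\textbf{Existence and uniqueness.} The holomorphic implicit function theorem now gives neighborhoods $W\ni(0,0)$ and $N\ni\phparen{0}$ and a unique holomorphic $\phi_*:W\to N$ with $F(\phi_*(\mu,\eps),\mu,\eps)=0$ and $\phi_*(0,0)=\phparen{0}$. Moreover, every root of $F(\cdot;\mu,\eps)$ in $N$ equals $\phi_*(\mu,\eps)$. Restricting to real $(\mu,\eps)$ gives a complex-valued real analytic function, which is the statement.

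\textbf{Simplicity and location.} Shrinking $W$, continuity keeps $\p_fF(\phi_*,\mu,\eps)\ne0$, so the root is simple. Rouché's theorem on a small circle around $\phparen{0}$ separating it from $\wt\phi$ shows that exactly one root (counted with multiplicity) lies inside. Although not needed for the proposition, I would also note that the later claim $\phi_*\in i\R$ follows from \eqref{eq:real_valued_polyNew}. That relation gives $\overline{F(-\bar f)}=-F(f)$ for real $(\mu,\eps)$, so $-\overline{\phi_*}$ is also a root in $N$ (as $\phparen{0}\in i\R$), and uniqueness forces $-\overline{\phi_*}=\phi_*$.

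I expect no serious obstacle. The only delicate point is confirming that the coefficients of $\polyNew$ are genuinely analytic at $\mu=0$, which rests on the already established facts that they depend on $\nu$ only through $\mu$ and are divisible by the appropriate powers of $\mu$.
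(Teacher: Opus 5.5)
Your proposal is correct and is essentially the paper's argument. The paper also obtains $\phi_*$ by applying the analytic implicit function theorem to $\polyNew$ at $(\phparen{0},0,0)$, using that $\phparen{0}$ is a simple root of $\poly_0$ with $\deri=\p_f\poly_0(\phparen{0})\ne 0$ (which needs $\gamma\ne 0$, i.e.\ $c_0\ne 1$), and your remark that $\phi_*\in i\R$ via $\overline{\polyNew(-\bar f;\mu,\eps)}=-\polyNew(f;\mu,\eps)$ plus uniqueness is exactly how the paper argues right after the proposition.
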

By Proposition \ref{prop:IFT_near_i}, we have $\phi^*(\mu, \eps)-\phi^{(0)}=O(\mu, \eps)$.   We describe an algorithm to lift $\phparen{0}$ to a higher-order approximation of $\phi_*$. This is a less precise variant of Newton's method.
Suppose $\phparen{j}$ is an approximation to $\phi_*$. Linearizing $\polyNew(f;\mu,\eps)$ about  $f=\phparen{j}$ gives
\[
\polyNew(f;\mu,\eps)\approx \polyNew(\phparen{j};\mu,\eps)+\deri(f-\phparen{j}),\quad \deri=\p_f\poly_0(\phparen{0}).
\]
Setting the right-hand side to zero, we get 
\[
f=\phparen{j}-\frac{\polyNew(\phparen{j};\mu,\eps)}{\deri}
\]
as a higher-order approximation to $\phi_*$. In practice, it is sufficient to retain the lowest order terms of $\polyNew(\phparen{j};\mu,\eps)$:
\begin{equation}
    \phparen{j+1}=\phparen{j}-\frac{\text{lowest order terms of  }\polyNew(\phparen{j};\mu,\eps)}{\deri}.
\end{equation}

Beginning with $\phparen{0}$, the procedure is carried out recursively as follows:
\bq
    \polyNew(\phparen{0};\mu,\eps)=-\frac{4 i c_0^9}{\left(c_0^2-1\right)^5 \left(c_0^2+1\right)^2}\mu+0\eps+O(\mu^2,\mu\eps,\eps^2),
    \eq
    \bq
    \phparen{1}:=\phparen{0}-\frac{-\frac{4 i c_0^9}{\left(c_0^2-1\right)^5 \left(c_0^2+1\right)^2}\mu+0\eps}{\deri},
\eq
\begin{align*}
        \polyNew(\phparen{1};\mu,\eps)&=\frac{\mu ^2 \left(4 i a_6 c_0^6\right)}{\left(c_0^2-1\right)^2 \left(c_0^2+1\right)^2}+\frac{\mu  \eps  \left(4 i b_9 c_0^6\right)}{\left(c_0^2-1\right)^2 \left(c_0^2+1\right)^2}\\
        &-\frac{\eps ^2 \left(i \left(-8 b_5 c_0^6+2 c_0^9-7 c_0^7+c_0^5+3 c_0^3+c_0\right)\right)}{2 \left(c_0^2-1\right)^2 \left(c_0^2+1\right)^2}+O(\mu^3,\mu^2\eps,\mu\eps^2,\eps^3),
        \end{align*}
 \bq\label{form:phiparen2}   \phparen{2}:=\phparen{1}-\frac{\frac{\mu ^2 \left(4 i a_6 c_0^6\right)}{\left(c_0^2-1\right)^2 \left(c_0^2+1\right)^2}+\frac{\mu  \eps  \left(4 i b_9 c_0^6\right)}{\left(c_0^2-1\right)^2 \left(c_0^2+1\right)^2}-\frac{\eps ^2 \left(i \left(-8 b_5 c_0^6+2 c_0^9-7 c_0^7+c_0^5+3 c_0^3+c_0\right)\right)}{2 \left(c_0^2-1\right)^2 \left(c_0^2+1\right)^2}}{\deri}.
\eq
In order to rigorously bound the error $|\phparen{2}-\phi_*|$, we shall appeal to the following lemma.
\begin{lemm}
    \label{lemma:error_control}
    If an analytic function $f_1(\mu,\eps)=\phparen{0}+O(\mu,\eps)$  satisfies
    \begin{equation}\label{polyNewf1}
        \polyNew\Big(f_1(\mu,\eps);\mu,\eps\Big)=O(\eps^{m_1}\mu^{n_1},\dots,\eps^{m_p}\mu^{n_p}),    \end{equation}
        then 
        \begin{equation}\label{est:phistar}
            \phi_*(\mu,\eps)=f_1(\mu,\eps)+O(\eps^{m_1}\mu^{n_1},\dots,\eps^{m_p}\mu^{n_p})
        \end{equation}
\end{lemm}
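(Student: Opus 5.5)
The plan is to factor $\polyNew$ at the root $\phi_*$ and divide by the cofactor. For $(\mu,\eps)$ near $(0,0)$ and $f$ in a small disc around $\phparen{0}$, the polynomial division by $f-\phi_*(\mu,\eps)$ is exact, since $\phi_*$ is a root (Proposition \ref{prop:IFT_near_i}). This gives
\[
\polyNew(f;\mu,\eps)=\big(f-\phi_*(\mu,\eps)\big)\,Q(f;\mu,\eps),
\]
where $Q$ is a monic quadratic in $f$. Its coefficients are polynomial in the coefficients of $\polyNew$ and in $\phi_*$, so they are analytic in $(\mu,\eps)$. At $(\mu,\eps)=(0,0)$ we have $Q(f;0,0)=(f-\wt\phi)^2$ by \eqref{def:poly0}. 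Hence $Q(\phparen{0};0,0)=(\phparen{0}-\wt\phi)^2=\deri\neq 0$ by \eqref{def:deri}.

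Next, substitute $f=f_1(\mu,\eps)$. Because $f_1=\phparen{0}+O(\mu,\eps)$, the composition $q(\mu,\eps):=Q(f_1(\mu,\eps);\mu,\eps)$ is analytic near $(0,0)$ with $q(0,0)=\deri\neq0$. Therefore $1/q$ is analytic in a neighborhood of the origin. Then
\[
\phi_*-f_1=-\frac{\polyNew(f_1;\mu,\eps)}{q(\mu,\eps)}.
\]
By hypothesis \eqref{polyNewf1}, the numerator equals $\sum_{j}\eps^{m_j}\mu^{n_j}G_j(\mu,\eps)$ with each $G_j$ analytic. Dividing by the analytic unit $q$ preserves this form, since each $G_j/q$ is analytic. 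This is exactly \eqref{est:phistar} in the sense of the notation of Section \ref{sec:notation}.

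The only point needing care is that the notation $O(\cdot)$ here means membership in an ideal of analytic functions, not a size bound. So we must multiply by an analytic function rather than merely bound a quotient. This is why I avoid a Rouché-type or inequality-based argument: it would only yield $|\phi_*-f_1|\lesssim\sum|\eps^{m_j}\mu^{n_j}|$, which is weaker. The factorization above uses only that $\polyNew$ is a polynomial in $f$ with analytic coefficients, that $\phi_*$ is analytic (Proposition \ref{prop:IFT_near_i}), and that the simple root is separated from the double root $\wt\phi$. The step most likely to need attention is checking that $Q$ has analytic coefficients, which follows from synthetic division: writing $\polyNew=f^3+a_2f^2+a_1f+a_0$, one gets $Q=f^2+(a_2+\phi_*)f+(a_1+a_2\phi_*+\phi_*^2)$.
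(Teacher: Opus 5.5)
Your proof is correct and is essentially the paper's argument. The paper writes $f_1=\phi_*+h$ and Taylor-expands $\polyNew(\phi_*+h)=h\,\partial_f\polyNew(\phi_*;\mu,\eps)+O(h^2)=h\big(\deri+O(\mu,\eps)\big)$, then implicitly divides by this analytic unit; this is exactly your identity $\polyNew(f_1;\mu,\eps)=(f_1-\phi_*)\,Q(f_1;\mu,\eps)$ with $Q(f_1;0,0)=\deri\neq 0$, and your synthetic division simply makes the cofactor and that final division explicit.
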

\begin{proof}
 Set $ h(\mu,\eps)=  f_1(\mu,\eps)-\phi_*(\mu,\eps)$, an analytic function. Since   $h(0,0)=\phparen{0}-\phparen{0}=0$, we have $h=O(\mu,\eps)$. Using this and the fact that $\phi_*(\mu, \eps)-\phi^{(0)}=O(\mu, \eps)$, we can expand
    \begin{align*}
        \polyNew\Big(f_1(\mu,\eps);\mu,\eps\Big)&=\polyNew\Big(\phi_*(\mu,\eps)+h(\mu,\eps);\mu,\eps\Big)\\
        &=\polyNew\Big(\phi_*(\mu,\eps);\mu,\eps\Big)+h(\mu,\eps)\p_f\polyNew\Big(\phi_*(\mu,\eps);\mu,\eps\Big)+O(h^2)\\
          &=\polyNew\Big(\phi_*(\mu,\eps);\mu,\eps\Big)+h(\mu,\eps)\Big[\p_f\polyNew\Big(\phparen{0};\mu,\eps\Big)+O(\mu,\eps)\Big]+O(h^2)\\
        &=\deri h(\mu,\eps)+h(\mu,\eps)O(\mu, \eps).
    \end{align*}
Combining this with \eqref{polyNewf1} yields
\begin{equation*}
    \deri h(\mu,\eps)+h(\mu,\eps)O(\mu, \eps)=O(\eps^{m_1}\mu^{n_1},\dots,\eps^{m_p}\mu^{n_p})
\end{equation*}
which then implies \eqref{est:phistar}. 
\end{proof}
Inserting the formula \eqref{form:phiparen2} of $\phi^{(2)}$ into $\polyNew$, we find using  Mathematica that 
\[
\polyNew(\phparen{2};\mu,\eps)=O(\mu^3,\mu^2\eps,\mu\eps^2,\eps^3).
\]
 Hence,  applying Lemma \ref{lemma:error_control} to $f_1=\phi^{(2)}$ gives
\begin{equation}
\label{eq:phi_star_phi_3_temp}
    \phi_*=\phparen{2}+O(\mu^3,\mu^2\eps,\mu\eps^2,\eps^3).
\end{equation}
Next, it follows from the properties in \eqref{eq:real_valued_polyNew} that $\ol{\polyNew(f;\mu,\eps)}=-\polyNew(-\ol f;\mu,\eps)$. Consequently, both $\phi_*$ and $-\ol{\phi_*}$ are roots of $\polyNew$. These roots are equal when $\mu=\eps=0$ since $\phi^{(0)}\in i\Rr$.  By the uniqueness in Proposition \ref{prop:IFT_near_i}, we conclude that $\phi_*= -\ol{\phi_*}$, and thus $\phi_*\in i\Rr$. Moreover, by applying  Lemma \ref{lemma:error_control} to $f_1=\phi^{(1)}$, we obtain 
\begin{equation}
\label{eq:phi_star_final_form}
    \begin{aligned}
        \phi_*&=\phi^{(1)}+ir_0(\mu^2,\mu\eps,\eps^2)=\frac{i c_0^3}{c_0^2-1}-\frac{ic_0^3}{(c_0^2-1)^4}\mu+ir_0(\mu^2,\mu\eps,\eps^2),\quad r_0\in \Rr.
    \end{aligned}
\end{equation}
\subsection{Two roots by quadratic formula}
We denote the remaining two roots of the cubic polynomial $\polyNew(f;\mu,\eps)$ by  $\phi_1(\mu,\eps),\phi_2(\mu,\eps)$. Then $\phi_1$ and $\phi_2$ are roots of the quadratic polynomial 
\begin{equation}
    \label{eq:quad_phi1_phi2}
    f^2+pf+q=0,
\end{equation}
where $p:=-(\phi_1+\phi_2)$ and   $q:=\phi_1\phi_2$. Invoking the relations
\begin{align*}
    -[f^2]\polyNew&=\phi_*+\phi_1+\phi_2,\\
   [f^1]\polyNew&=\phi_*\phi_1+\phi_*\phi_2+\phi_1\phi_2= \phi_*(\phi_1+\phi_2)+\phi_1\phi_2,
\end{align*}
we find
\[
    p=\phi_*+[f^2]\polyNew,\quad  q=\phi_*p+[f^1]\polyNew.
\]
Since $\phi_*,[f^2]\polyNew\in i\anaR{\mu,\eps}$ and $[f^1]\polyNew\in\anaR{\mu,\eps}$, we have $p\in i\anaR{\mu,\eps}$ and $q\in \anaR{\mu,\eps}$. Consequently, the discriminant of \eqref{eq:quad_phi1_phi2} is
\begin{equation}
    \label{def:Delta_quadratic}
    \Delta=\Delta(\mu,\eps):=p^2-4q\in\anaR{\mu,\eps}.
\end{equation}
We substitute the second-order  expansions of $\phi_*$ (given by \eqref{form:phiparen2}-\eqref{eq:phi_star_phi_3_temp}) and $ [f^1]\polyNew$, $[f^2]\polyNew$ (directly obtained from  \eqref{eq:Lmat_final} by symbolic computation in Mathematica) 
 to find second-order expansions  $p$ and $\Delta$:
\begin{mmain}
 (*In the output below for $p$ and $\Delta$, we  only keep the coefficients of  $\mu^m\eps^e$ with $m+e\le 2$ and set the higher-order coefficients to $0$.*)
 
    $\text{showCoefficients}(p,\{\eps ,\mu \}) $
    
    $\text{showCoefficients}(\text{quadraticDelta},\{\eps ,\mu \},100)$
\end{mmain}
\begin{mmaout}
    \[
  \left(
\begin{array}{cccc}
 \text{} & 1 & \mu  & \mu ^2 \\
 1 & -\frac{2 i c_0^3}{c_0^2+1} & 0 & -i \left(a_1+a_3\right) \\
 \epsilon  & 0 & -i \left(b_6+b_8\right) & 0 \\
 \epsilon ^2 & -\frac{i \left(16 b_3 c_0^5+2 c_0^8-7 c_0^6+c_0^4+3 c_0^2+1\right)}{8 c_0^5} & 0 & 0 \\
\end{array}
\right)
    \]
    \[\left(
\begin{array}{cccc}
 \text{} & 1 & \mu  & \mu ^2 \\
 1 & 0 & 0 & -\frac{4 c_0^6}{\left(c_0^2+1\right)^6} \\
 \epsilon  & 0 & 0 & 0 \\
 \epsilon ^2 & \frac{-c_0^6+3 c_0^4-c_0^2+3}{\left(c_0^2+1\right)^3} & 0 & 0 \\
\end{array}
\right)
    \]
\end{mmaout}
Since $p$ is purely imaginary and $\Delta $ is real, we can read off the preceding output that
\begin{align}
    p&=-\frac{2 i c_0^3}{c_0^2+1}+ir_1(\mu^2,\mu\eps,\eps^2),\\
    \label{eq:Delta_quadratic}
    \Delta&=-\frac{4 c_0^6}{\left(c_0^2+1\right)^6}\mu^2+\frac{-c_0^6+3 c_0^4-c_0^2+3}{\left(c_0^2+1\right)^3}\eps^2+r_2(\mu^3,\mu^2\eps,\mu\eps^2,\eps^3),
\end{align}
where $r_1$, $r_2\in \Rr\{\mu, \eps\}$ depend only on $\gamma\ne 0$.  By the quadratic formula, the roots  $\phi_1$ and $\phi_2$ are given by 
\begin{equation}\label{phi12-final}
    \phi_{1,2}=\mez \left(\frac{2 i c_0^3}{c_0^2+1}-ir_1(\mu^2,\mu\eps,\eps^2)\pm\sqrt{-\frac{4 c_0^6}{\left(c_0^2+1\right)^6}\mu^2+\frac{-c_0^6+3 c_0^4-c_0^2+3}{\left(c_0^2+1\right)^3}\eps^2+r_2(\mu^3,\mu^2\eps,\mu\eps^2,\eps^3)} \right).
\end{equation}
Recalling the scaling  $\lambda=\mu f$,  we obtain from the formulas \eqref{eq:phi_star_final_form} and \eqref{phi12-final} that  the eigenvalues  of $\Lmat_{\nu,\eps}$ are 
\begin{align}
\label{eq:lambda_star_last_section}
    \lambda_*&=\frac{i c_0^3}{c_0^2-1}\mu-\frac{ic_0^3}{(c_0^2-1)^4}\mu^2+i\mu r_0(\mu^2,\mu\eps,\eps^2),\\
    \label{eq:lambda_12_last_section}
    \lambda_{1,2}&=\mez \left(\frac{2 i c_0^3}{c_0^2+1}\mu-i\mu r_1(\mu^2,\mu\eps,\eps^2)\pm\mu\sqrt{-\frac{4 c_0^6}{\left(c_0^2+1\right)^6}\mu^2+\frac{-c_0^6+3 c_0^4-c_0^2+3}{\left(c_0^2+1\right)^3}\eps^2+r_2(\mu^3,\mu^2\eps,\mu\eps^2,\eps^3)} \right).
\end{align}
Since 
\[
-c_0^6+3 c_0^4-c_0^2+3=(3-c_0^2)(c_0^4+1),
\]
we have 
\begin{equation}\label{signDelta:gamma}
    -c_0^6+3 c_0^4-c_0^2+3\le 0 \iff c_0\ge \sqrt{3}\iff \gamma\ge \frac{2}{\sqrt3}.
\end{equation}

\subsection{Proof of Theorem \ref{thm:main}}
We have proven that there are  $\mu_1,\eps_1>0$ and a neighborhood  $U$ of  $0$ in $\C$, all depending on $\gamma\ne 0$, such that for any $\condMuEps{1}{1}$, the spectrum of $\L_{\nu,\eps}=\L_{\nu,\eps}^+$ in $U$ consists of the eigenvalues  $\{\ld_*,\ld_1,\ld_2\}$ given by \eqref{eq:lambda_star_last_section}-\eqref{eq:lambda_12_last_section}. To prove the two parts of Theorem \ref{thm:main}, we consider the following two cases. 

 \underline{Case 1: $\gamma>\frac{2}{\sqrt3}$}.  Then $\frac{-c_0^6+3 c_0^4-c_0^2+3}{\left(c_0^2+1\right)^3}<0$ in view of \eqref{signDelta:gamma}. Since 
 \[
 |r_2(\mu^3,\mu^2\eps,\mu\eps^2,\eps^3)|=o(|\mu|^2+|\eps|^2),
 \]
  there exist $\mu_2,\eps_2$ depending on $\gamma$ such that, for any $|\eps|<\eps_2$ and $|\mu|<\mu_2$, we have
\begin{equation}
   \Delta= -\frac{4 c_0^6}{\left(c_0^2+1\right)^6}\mu^2+\frac{-c_0^6+3 c_0^4-c_0^2+3}{\left(c_0^2+1\right)^3}\eps^2+r_2(\mu^3,\mu^2\eps,\mu\eps^2,\eps^3)<0.
\end{equation}
This proves part (i) of Theorem \ref{thm:main}.

\underline{Case 2: $\gamma<\frac{2}{\sqrt3}$}. Then $\frac{-c_0^6+3 c_0^4-c_0^2+3}{\left(c_0^2+1\right)^3}>0$. There exist $r_3$, $r_4\in \Rr \{\mu,\eps\}$ depending only on $\gamma$,  and $r_j=O(\mu, \eps)$, such that 
\begin{equation}
\label{def:r3_r4}
\begin{aligned}
       \Delta
       &=\frac{-c_0^6+3 c_0^4-c_0^2+3}{\left(c_0^2+1\right)^3}\eps^2\Big(1+r_3(\mu,\eps)\Big)-\frac{4 c_0^6}{\left(c_0^2+1\right)^6}\mu^2\Big(1+r_4(\mu,\eps)\Big).
\end{aligned}
\end{equation}
Then we can choose $\mu_3<\mu_2$ and $\eps_3<\eps_2$ such that $|r_3|$, $|r_4|<\mez$ for all $\condMuEps{3}{3}$. Then we can find $r_5$, $r_6\in \Rr\{\mu, \eps\}$, $r_j=O(\mu, \eps)$, such that 
\begin{equation}
    1+r_3(\mu,\eps)=\Big(1+r_5(\mu,\eps)\Big)^2,\quad 1+r_4(\mu,\eps)=\Big(1+r_6(\mu,\eps)\Big)^2.
\end{equation}
Consequently, we can factorize 
\begin{equation}
    \begin{aligned}
        \Delta 
        &=\Bigg\{\sqrt{\frac{-c_0^6+3 c_0^4-c_0^2+3}{\left(c_0^2+1\right)^3}}\eps\Big(1+r_5(\mu,\eps)\Big)+\sqrt{\frac{4 c_0^6}{\left(c_0^2+1\right)^6}}\mu\Big(1+r_6(\mu,\eps)\Big)\Bigg\}\\
        &\qquad \cdot \Bigg\{\sqrt{\frac{-c_0^6+3 c_0^4-c_0^2+3}{\left(c_0^2+1\right)^3}}\eps\Big(1+r_5(\mu,\eps)\Big)-\sqrt{\frac{4 c_0^6}{\left(c_0^2+1\right)^6}}\mu\Big(1+r_6(\mu,\eps)\Big)\Bigg\}\\
        &=:\Delta_+(\mu,\eps)\Delta_-(\mu,\eps).
    \end{aligned}
\end{equation}
Since $\Delta_+(0,0)=0$ and 
\begin{equation}
    \partial_\mu\Delta_+(0,0)=\sqrt{\frac{4 c_0^6}{\left(c_0^2+1\right)^6}}\ne 0,
\end{equation}
the analytic implicit function theorem implies that  there exists a unique real-valued real analytic function $\critmu_1(\eps)$ for $|\eps|$ small enough, such that $\Delta_+(\critmu_1(\eps),\eps)=0$. In addition, by a proof similar to that of  Lemma \ref{lemma:error_control}, since
\begin{equation}
    \Delta_+\left(-\sqrt{\frac{(c_0^2+1)^3(-c_0^6+3 c_0^4-c_0^2+3)}{4c_0^6}}\eps,\eps\right)=O(\eps^2),
\end{equation}
we have 
\begin{equation}
    \critmu_1(\eps)=-\sqrt{\frac{(c_0^2+1)^3(-c_0^6+3 c_0^4-c_0^2+3)}{4c_0^6}}\eps+O(\eps^2).
\end{equation}
Similarly, we obtain a real-valued real analytic function 
\begin{equation}
    \critmu_2(\eps)=\sqrt{\frac{(c_0^2+1)^3(-c_0^6+3 c_0^4-c_0^2+3)}{4c_0^6}}\eps+O(\eps^2)
\end{equation}
such that $\Delta_-(\critmu_2(\eps),\eps)=0$. 

We define the piecewise analytic function $\critmu:(-\eps_4,\eps_4)\to [0,\mu_4)$, 
\begin{equation}
    \critmu(\eps)=\begin{cases}
        \critmu_1(\eps)&\eps<0\\
        \critmu_2(\eps)&\eps\ge 0
    \end{cases},
\end{equation}
The above analysis shows that there exist  $\mu_4<\mu_3,\eps_4<\eps_3$ depending on $\gamma$ such that for any $\condMuEps{4}{4}$, we have 
\begin{equation}
 \Delta(\mu,\eps)\lesseqqgtr0 \iff \mu\gtreqqless\critmu(\eps).
\end{equation}
This proves part (ii) of  Theorem \ref{thm:main}.

{\bf Maximum growth rate}. For each fixed $\eps\in (-\eps_4, \eps_4)$, we  find the maximum growth rate when $\mu$ varies in $(0, \mu_4)$.We note that $\RE(\lambda_{1,2})\ne 0$  only when $\mu<\critmu(\eps)$. In that case, we have
\[
    \RE(\lambda_{1})=-\mez \mu\sqrt{\Delta}<\RE(\ld_2)=\mez \mu\sqrt{\Delta}.
\]
For  the maximum of $\mu^2\Delta(\mu,\eps)$, we compute  
\begin{align*}
    \p_\mu(\mu^2\Delta(\mu,\eps))
    =\mu\Big(-\frac{16 c_0^6}{\left(c_0^2+1\right)^6}\mu^2+\frac{2(-c_0^6+3 c_0^4-c_0^2+3)}{\left(c_0^2+1\right)^3}\eps^2+r_2(\mu^3,\mu^2\eps,\mu\eps^2,\eps^3)\Big).
\end{align*}
Similarly to the analysis of $\Delta$ above, there exist  $\mu_5<\mu_4,\eps_5<\eps_4$ depending on $\gamma$, and a piecewise analytic function $\mu_m(\eps)$ in $\eps<0$ and $\eps\ge0 $ of the form 
\begin{equation}
    \mu_m(\eps)=\sqrt{\frac{(c_0^2+1)^3(-c_0^6+3 c_0^4-c_0^2+3)}{8c_0^6}}|\eps|+O(\eps^2)
\end{equation}
such  that for any $\condMuEps{5}{5}$ we have 
\[
 \p_\mu(\mu^2\Delta(\mu,\eps))\lesseqqgtr0\iff \mu\gtreqqless\mu_m(\eps).
\]
We note that $\mu_m(\eps)<\critmu(\eps)$ for $\eps\ne 0$. Consequently, for any fixed $\gamma\in \Rr\setminus\{0, \frac{2}{\sqrt3}\}$ and $\eps\in (-\eps_5, \eps_5)$, we have 
\bq\label{maxReld}
    \max_{\mu\in (0, \mu_5)}\RE(\lambda_2(\mu, \eps))=\mez \mu_m\sqrt{\Delta(\mu_m,\eps)}=\frac{(-c_0^6+3 c_0^4-c_0^2+3)\eps^2}{8c_0^3}+O(\eps^3).
\eq
{\bf Approximate figure-$8$ curve}.  We recall that for  $0<\mu<\critmu(\eps)$, $\ld_{1,2}$ have nonzero real parts and are given by \eqref{eq:lambda_12_last_section}, which is also valid for $\gamma=0$ by virtue of \cite[Theorem 1.1]{Berti_Full}. The real and imaginary parts of these eigenvalues are approximated by 
\begin{align*}
    \RE(\lambda)=\pm\mez \mu\sqrt{-\frac{4 c_0^6}{\left(c_0^2+1\right)^6}\mu^2+\frac{-c_0^6+3 c_0^4-c_0^2+3}{\left(c_0^2+1\right)^3}\eps^2} ,\quad  \IM(\lambda)=\frac{c_0^3}{c_0^2+1}\mu.
\end{align*}
Then, $(\RE(\lambda)/\eps^2,\IM(\lambda)/\eps)$ form the  figure-$8$ curve  parametrized by $s:=\mu/\eps$:
\bq\label{form:figure8}
    \frac{\RE(\lambda)}{\eps^2}=\pm\mez s\sqrt{-\frac{4 c_0^6}{\left(c_0^2+1\right)^6}s^2+\frac{-c_0^6+3 c_0^4-c_0^2+3}{\left(c_0^2+1\right)^3}},\quad   \frac{\IM(\lambda)}{\eps}=\frac{c_0^3}{c_0^2+1}s.
\eq
 %%%%%%%%%%%%%%%%  APPENDIX  %%%%%%%%%%%%%%%%%%%%%
\appendix\label{appendix}
\section{Stokes waves with constant vorticity}\label{appendix:local_bif}
This appendix is devoted to the proof of Theorem \ref{thm:Stokes_without_g}. 

\subsection{Local bifurcation}
The existence of small-amplitude Stokes waves with constant vorticity was proven in \cite[Theorem 1.1]{2025}. For the sake of completeness and to prepare for the second-order expansion of Stokes waves, we will include the existence proof, which relies on the Crandall-Rabinowitz bifurcation theorem.
\begin{theo}[Crandall-Rabinowitz bifurcation theorem]\label{theo:CR} \cite[Theorem 4.1, Chap.5]{Ambrosetti}) 
     Let $X, Y$ be Banach spaces and $U \subset X$ be an open neighborhood of 0 . Let $F: U \times \mathbb{R} \rightarrow Y, F(u, c)$, be an analytic map satisfying $F(0, c)=0$ for any $c \in \mathbb{R}$. Let $c_0$ be such that $L:=\mathrm{d}_u F\left(0, c_0\right) \in L(X, Y)$ is not invertible and
     \begin{itemize}
         \item $\operatorname{Ker}(L)=\operatorname{span}\{u^*\}$, $u^* \in X$;
         \item the range $\operatorname{Ran}(L)$ is closed and $\operatorname{codim} \operatorname{Ran}(L)=1$;
         \item (transversality) $\partial_c \mathrm{~d}_u F\left(0, c_0\right)\left[u^*\right] \notin \operatorname{Ran}(L)$.
     \end{itemize}
Then there exist $\eps_*>0$ and an analytic function
$$
\left(-\eps_*, \eps_*\right) \rightarrow U \times \mathbb{R}, \quad \eps \mapsto\left(u_\eps, c_\eps\right), \quad u_\eps=\eps u^*+O\left(\eps^2\right), \quad c_\eps=c_0+O(\eps)
$$
such that $F\left(u_\eps, c_\eps\right)=0$ for any $|\eps|<\eps_*$.
\end{theo}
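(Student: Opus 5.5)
The statement to prove is the Crandall--Rabinowitz theorem itself, quoted from \cite{Ambrosetti}. I would prove it by a Lyapunov--Schmidt reduction followed by a division trick that removes the trivial branch.

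\emph{Splitting.} Since $\operatorname{Ker}(L)=\operatorname{span}\{u^*\}$ is finite dimensional, choose a closed complement $X_0$, so that $X=\operatorname{span}\{u^*\}\oplus X_0$. Since $\operatorname{Ran}(L)$ is closed of codimension one, choose $y^*\notin\operatorname{Ran}(L)$, so that $Y=\operatorname{Ran}(L)\oplus\operatorname{span}\{y^*\}$. Let $Q:Y\to\operatorname{Ran}(L)$ be the associated bounded projection. Then $L|_{X_0}:X_0\to\operatorname{Ran}(L)$ is a bounded bijection, hence an isomorphism by the open mapping theorem.

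\emph{Removing the trivial branch.} Write $u=\eps(u^*+w)$ with $w\in X_0$. Because $F(0,c)=0$ for all $c$ and $F$ is analytic, the function
\[
G(\eps,w,c):=\int_0^1 \mathrm{d}_uF\big(t\eps(u^*+w),c\big)[u^*+w]\,dt
\]
is analytic and satisfies $F(\eps(u^*+w),c)=\eps\, G(\eps,w,c)$. For $\eps\neq0$, zeros of $F$ of this form are therefore exactly zeros of $G$. At $\eps=0$ we have $G(0,w,c)=\mathrm{d}_uF(0,c)[u^*+w]$, and in particular $G(0,0,c_0)=Lu^*=0$.

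\emph{Implicit function theorem.} Apply the analytic implicit function theorem to $G:\Rr\times X_0\times\Rr\to Y$ at the point $(0,0,c_0)$, taking $(w,c)$ as the unknowns. The derivative with respect to $(w,c)$ at this point is
\[
(\dot w,\dot c)\mapsto L\dot w+\dot c\,\partial_c\mathrm{d}_uF(0,c_0)[u^*].
\]
This map is injective: if $L\dot w+\dot c\,\partial_c\mathrm{d}_uF(0,c_0)[u^*]=0$, then applying $I-Q$ and using transversality gives $\dot c=0$, and then $L\dot w=0$ with $\dot w\in X_0$ forces $\dot w=0$. It is also surjective: given $y\in Y$, write $y=Qy+\alpha y^*$. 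Transversality says $z:=\partial_c\mathrm{d}_uF(0,c_0)[u^*]$ has a nonzero $y^*$-component, so choose $\dot c$ matching $\alpha$, and then solve $L\dot w=Qy-\dot c\,Qz$ in $X_0$. Hence the derivative is an isomorphism $X_0\times\Rr\to Y$.

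The implicit function theorem now yields analytic maps $\eps\mapsto(w(\eps),c(\eps))$ with $w(0)=0$, $c(0)=c_0$, and $G(\eps,w(\eps),c(\eps))=0$. Setting $u_\eps=\eps(u^*+w(\eps))=\eps u^*+O(\eps^2)$ and $c_\eps=c(\eps)=c_0+O(\eps)$ gives $F(u_\eps,c_\eps)=0$, as claimed; shrinking $\eps_*$ ensures $u_\eps\in U$.

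\emph{Main obstacle.} The only delicate point is the invertibility of the derivative in the implicit function step. This is exactly where transversality enters, and where the closed-range and codimension-one hypotheses are needed to get bounded projections and bounded inverses.
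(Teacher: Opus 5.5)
Your proof is correct. The identity $F(\eps(u^*+w),c)=\eps\,G(\eps,w,c)$ follows from $F(0,c)=0$ and the fundamental theorem of calculus. The derivative of $G$ in $(w,c)$ at $(0,0,c_0)$ is indeed $(\dot w,\dot c)\mapsto L\dot w+\dot c\,\partial_c\mathrm{d}_uF(0,c_0)[u^*]$. Your injectivity and surjectivity check, followed by the open mapping theorem, uses the kernel, closed-range/codimension-one and transversality hypotheses exactly where they are needed. Finally, $w(0)=0$ gives $u_\eps=\eps u^*+O(\eps^2)$.

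The paper gives no proof of this statement. It imports the result from \cite{Ambrosetti} as a black box for Appendix \ref{appendix:local_bif}, so there is no argument in the paper to match; what you wrote is the original Crandall--Rabinowitz proof.

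The other standard presentation starts with a Lyapunov--Schmidt reduction. One solves $QF(su^*+x,c)=0$ for $x=x(s,c)\in X_0$, divides the one-dimensional bifurcation function $(I-Q)F(su^*+x(s,c),c)$ by $s$, and applies the implicit function theorem in $(s,c)\in\mathbb{R}^2$, with transversality supplying the nonzero $c$-derivative. Your route needs only a single application of the implicit function theorem and gives analyticity of $\eps\mapsto(u_\eps,c_\eps)$ at once.

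The reduction route, by contrast, makes local uniqueness of the branch immediate: near $(0,c_0)$ the zero set is the trivial line plus the curve. In your framework that would need an extra a priori bound $\|x_0\|\le\delta|s|$ for zeros $su^*+x_0$, so that they can be written as $s(u^*+w)$ with $w$ small. Uniqueness is not part of the statement you were asked to prove, so this does not affect your argument.
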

For the Stokes wave system \eqref{eq:main}, we set 
\begin{equation}\label{def:F:Stokes}
    F\left(\vet{\eta}{\psi},c\right)=\Bigg[c\eta_x+G(\eta) \psi+\gamma \eta \eta_x,~c\psi_x-g \eta-\frac{{\psi_x^2}}{2}+\frac{\big(\eta_x \psi_x+G(\eta) \psi\big)^2}{2\left(1+\eta_x^2\right)}+\gamma \eta \psi_x+\gamma \partial_x^{-1} G(\eta) \psi\Bigg]^T.
\end{equation}
Following \cite{2025}, we set 
\begin{equation}
    X=H^{\sigma,s}_{\text{ev}}\times H^{\sigma,s}_{\text{odd}}, \quad Y=H_{\text{odd}}^{\sigma, s-1}\times H^{\sigma, s-1}_{\text{ev}}
\end{equation}
for any $\sigma,s$ satisfying
\begin{equation}\label{sigmas}
    \sigma\ge 0,\quad s\ge\frac72, \quad s+\mez \in\Z.
\end{equation}
Here, the analyticity norm is defined by 
\[
  \| u\|_{H^{\sigma, s}}^2:=\sum_{k\in \Zz} e^{2\sigma |k|}(1+k^2)^s|\hat{u}(k)|^2,\quad H^{\sigma, s}=\left\{u\in L^2(\T): \| u\|_{H^{\sigma, s}}<\infty\right\}. 
\] 
By \cite[Theorem 1.2]{2025}, there exists $\eps_0=\eps_0(s)>0$ such that the map
\bq
\eta\mapsto G(\eta),\quad  H^{\sigma, s}\cap B^{\sigma, s}(\eps_0)\to \cL(H^{\sigma, s}, H^{\sigma, s-1})
\eq
 is analytic, and the Neumann series $(1+\eta_x^2)^{-1}$ converge for $\eta\in B^{\sigma, s}(\eps_0)$. It follows that 
 \[
 F: U\times \Rr\to Y,\quad U:=  \left(H^{\sigma, s}_{\text{ev}}\cap B^{\sigma, s}(\eps_0)\right)\times H^{\sigma,s}_{\text{odd}},
 \]
is analytic. 

Let $c\in \Rr$. Clearly $F\left(\vet{0}{0}, c\right)=0$ and the linearization of $F$ about $\vet{0}{0}$ is 
\begin{equation}\label{bifur:opL}
\begin{aligned}
        L=        \begin{bmatrix}
            c\partial_x& G(0)\\
            -g& c\partial_x+\gamma\partial_x^{-1}G(0)
        \end{bmatrix},
\end{aligned}
\end{equation}
where $G(0)=|D|\tanh(h|D|)$ with the convention that $\tanh(hk)=1$ if $h=\infty$. We have 
\begin{equation}
\label{eq:F_linearized}
    \begin{aligned}
        \F( L(\eta, \psi))(k)&=
        \begin{bmatrix}
            ick & k\tanh(hk)\\
            -g& ick-i\gamma \tanh(hk)
        \end{bmatrix}
        \begin{bmatrix}
            \wh{\eta}(k)\\
            \wh{\psi}(k)
        \end{bmatrix}=: M_c(k)
        \begin{bmatrix}
            \wh{\eta}(k)\\
            \wh{\psi}(k)
        \end{bmatrix}
    \end{aligned}
\end{equation}
and 
\begin{equation}\label{dispersion}
    \det(M_c(k))=-c^2k^2+c\gamma k\tanh(hk)+gk\tanh(hk)=:kE_c(k). 
\end{equation} 
Due to the gravity term, the only constant vector in $\operatorname{Ker}(L)$ is $0$. Hence, $\operatorname{Ker}(L)$ is nontrivial if and only if $   \det(M_c(k))=0$ for some $k\in \Zz\setminus\{0\}$. 

We note that, for any $k\in \Zz\setminus\{0\}$,
\bq \label{def:c0pm}
\begin{aligned}
\det(M_c(k))=0 &\iff E_c(k)=0\\
&\iff 
   c= c^\pm_0(k)=\frac{\gamma\tanh(hk)\pm\sqrt{\gamma^2\tanh^2(hk)+4gk\tanh(hk)}}{2k},
    \end{aligned}
\eq
where $c^+_0(k)>0>c^-_0(k)$. 

We fix any $k_0\in \Nn$ and set $c_0=c^+_0(k_0)>0$. Let us show that the unique solution $k\in \Nn=\{1, 2, \dots\}$ of  $E_{c_0}(k)=0$ is $k=k_0$. When  $h=\infty$, this is obvious since $E_{c_0}(k)=-c_0^2k+c_0\gamma+g$ is an affine function. As for the case $h<\infty$, we have 
    \[
    \p_\xi^2E_{c_0}( \xi)=(c_0\gamma+g)h^2(\tanh'')(h\xi)=-2(c_0\gamma+g)h^2\operatorname{sech}^2(h\xi)\tanh(h\xi).
    \]
  If $c_0\gamma+g=0$, then $E_{c_0}(k)=-c_0^2k\ne 0$ for all $k\in \Nn$, contradicting the fact that $E_{c_0}(k_0)=0$. Thus $c_0\gamma+g\ne 0$, and hence  $\p_\xi^2E_{c_0}(\xi)\ne 0$ for all $\xi>0$. By Rolle's theorem,  $E_{c_0}(\cdot)$ has at most two roots on  $[0,\infty)$. But  $E_{c_0}(0)=0$, so $k_0$ is the unique  root in $\Nn$. We have shown that when $c=c_0$, the kernel of the linearized operator  $L$ is one dimensional. Moreover, by a simple calculation, we have $\operatorname{Ker}(L)=\operatorname{span}\{u^*\}$ and $\operatorname{Ker}(L^*)=\operatorname{span}\{u^\dag\}$, where  
\bq\label{kernelcokernel}
u^*=\begin{bmatrix}
    \tanh(hk_0)\cos(k_0 x)\\
    c_0\sin(k_0 x)
\end{bmatrix}, \quad 
u^\dag=\begin{bmatrix}
   g\sin(k_0x)\\
   -c_0k_0\cos(k_0 x)
\end{bmatrix}.
\eq
Moreover, it is easy to check that $f=(f_1, f_2)\in \operatorname{Ran}(L)$ if and only if $f\perp u^\dag$ in $Y$. In particular, we have $\operatorname{codim} \operatorname{Ran}(L)=1$. Since 
\[
    \partial_c \mathrm{~d}_u F\left(0, c_0\right)\vet{\eta}{\psi}=
   \vet{\p_x\eta}{\p_x\psi},
   \]
we have
\begin{align*}
    &\big(u^\dag,  \partial_c \mathrm{~d}_u F\left(0, c_0\right)\left[u^*\right]\big)_Y\\
&=2\pi e^{2\sigma k_0}(1+k_0^2)^{s-1}\big(u^\dag,  \partial_c \mathrm{~d}_u F\left(0, c_0\right)\left[u^*\right]\big)_{(L^2)^2}\\
&=2\pi e^{2\sigma k_0}(1+k_0^2)^{s-1}\int_\T\Big( -gk_0\tanh(hk_0)\sin^2(k_0 x)-c_0^2k_0^2\cos^2(k_0x)\Big)dx<0,
\end{align*}
so that the transversality condition is satisfied. Therefore, by virtue of Theorem \ref{theo:CR}, there is a local curve of Stokes waves with the properties (i) and (ii) in Theorem \ref{thm:Stokes_without_g}. 

\subsection{Second-order expansion}\label{appendix:2nd_expansions}
Let $(\eta, \psi)=O(\eps)$ be a Stoke wave obtained above. From \cite[formula (2.14)]{Craig_Sulem}, we have the expansion 
\begin{equation}
\label{eq:expansion:G_order_eps_3}
\begin{aligned}
G(\eta) \psi & =\underbrace{G_0 \psi}_{O(\eps)}
+ \underbrace{\left( \eta D^2 - \eta_x \partial_x  \right) \psi -G_0(\eta G_0(\psi))}_{O(\eps^2)}\\
&+\underbrace{\left((\eta_x)^2G_0\psi+G_0\big(\eta G_0(\eta G_0\psi)\big)-\frac{1}{2}G_0(\eta^2D^2\psi)-\eta D^2(\eta G_0\psi)+\frac{1}{2}\eta^2 D^2G_0\psi\right)}_{O(\eps^3)}
+ \rem{4}.
\end{aligned}
\end{equation}
where $G_0:=G(0)=|D|\tanh (h|D|)$. We note that each $O(\eps^j)$-term in \eqref{eq:expansion:G_order_eps_3} contains terms of order $\eps^j$ and higher. We proceed to find the coefficients in the following expansions:   
\begin{equation}
\begin{aligned}
       \eta&=\eps\eta_1+\eps^2\eta_2+\rem 3,\\
       \psi&=\eps\psi_1+\eps^2\psi_2+\rem 3,\\
       c&=c_0+\eps c_1+\eps^2c_2+\rem 3.
   \end{aligned}
\end{equation}
In view of \eqref{kernelcokernel}, we have 
\begin{equation}\label{eta1psi1}
\eta_1  = \tanh \left(hk_0\right) \cos \left(k_0 x\right),\quad \psi_1 =c_0\sin \left(k_0 x\right).
\end{equation}
Using \eqref{eq:expansion:G_order_eps_3}, we can expand \eqref{eq:main} and obtain the  $\eps^2$-order system 
\begin{equation}
    \left\{
\begin{aligned}
 0&=\big(c_1 \partial_x \eta_1 + c_0 \partial_x \eta_2\big)+ G_0 \psi_2 + \big(\eta_1 D^2 - \partial_x \eta_1 \partial_x - G_0 \eta_1 G_0\big) \psi_1 
+ \gamma\eta_1 \partial_x \eta_1, \\
0&=\big(c_1 \partial_x \psi_1 + c_0 \partial_x \psi_2\big)  -g \eta_2 - \frac{(\partial_x \psi_1)^2}{2} + \frac{1}{2} (G_0 \psi_1)^2 \\
&\quad + \gamma \eta_1 \partial_x \psi_1 + \gamma \partial_x^{-1} \bigg(G_0 \psi_2 + \big(\eta_1 D^2 - \partial_x \eta_1 \partial_x - G_0 \eta_1 G_0\big) \psi_1 \bigg).
\end{aligned}
\right.
\end{equation}
Substituting \eqref{eta1psi1}, the above equations become 
\begin{equation}
\label{eq1_order2}
    \begin{aligned}
    -c_0 \partial _x \eta_2 -G_0 \psi_2
    &=\Big(\eta_1 D^2-\partial_x \eta_1 \partial_x-G_0 \eta_1 G_0\Big) \psi_1+\gamma \eta_1 \partial_x \eta_1+c_1\partial_x \eta_1\\
    &=\left(
c_0 k_0^2 \tanh(h k_0) 
-  c_0 k_0^2 \tanh(2 h k_0) \tanh^2(h k_0) 
- \frac{1}{2}\gamma{k_0} \tanh^2(h k_0)
\right) \sin(2 k_0 x)\\
&\qquad-c_1 k_0 \tanh(h k_0)\sin(k_0 x)\\
&=:a\sin(2k_0 x)-c_1 k_0 \tanh(h k_0)\sin(k_0 x)
\end{aligned}
\end{equation}
and 
\begin{equation}
\label{eq2_order2}
    \begin{aligned}
    &\quad(-c_0\partial_x-\gamma\partial_x^{-1} G_0)\psi_2+g\eta_2\\
    &=\frac{1}{4}\Big(-c_0^2 k_0^2 + c_0^2 k_0^2 \tanh^2(h k_0) + 2 \gamma k_0 c_0 \tanh(h k_0)\Big)\\
    &\qquad+\frac{1}{4}\Big(-c_0^2k_0^2-c_0^2k_0^2 \tanh^2(h k_0)+2\gamma c_0 k_0\tanh(2hk_0)\tanh^2(hk_0)\Big)\cos(2k_0 x)+c_0 c_1 k_0\cos(k_0 x)\\
    &=:b_0+b\cos(2k_0 x)+c_0 c_1 k_0\cos(k_0 x),
\end{aligned}
\end{equation}
where the coefficients  $a$, $b$, and $b_0$ depend only  on $(c_0, h, k_0)$. Comparing these equations to \eqref{bifur:opL}, we find 
\[
-L\begin{bmatrix}
\eta_2\\ \psi_2
\end{bmatrix}=\begin{bmatrix}
a\sin(2k_0 x)-c_1 k_0 \tanh(h k_0)\sin(k_0 x)\\
b_0+b\cos(2k_0 x)+c_0 c_1 k_0\cos(k_0 x)
\end{bmatrix}=:F,
\]
where, at frequency $k$, $L$ is given by the matrix $M_{c_0}(k)$ in \eqref{eq:F_linearized}. Since  $\eta_2$ is even and $\psi_2$ is odd, we have 
\begin{equation}
    \begin{aligned}
    \eta_2=\frac{1}{2\pi}\hat{\eta}_2(0)+\frac{1}{\pi}\sum_{k\ge 1} \hat{\eta}_2(k) \cos(kx),\quad 
    \psi_2=\frac{i}{\pi}\sum_{k\ge 1} \hat{\psi}_2(k) \sin(kx).
\end{aligned}
\end{equation}
Integrating \eqref{eq2_order2} over $\T$ yields $ \hat{\eta}_2(0)=\frac{2\pi b_0}{g}$. 
For $k\ne 0$, we have  $\F(\cos(kx))(k)=\pi$ and $\F(\sin(kx))(k)=\frac{\pi}{i}$. In particular, at $k=k_0$, we have 
\begin{equation}
    -M_{c_0}(k_0)
    \begin{bmatrix}
        \hat{\eta}_2(k_0)\\
        \hat{\psi}_2(k_0)
    \end{bmatrix}
    =
    \begin{bmatrix}
        -\frac{\pi}{i}c_1 k_0 \tanh(h k_0)\\
        \pi c_0 c_1 k_0
    \end{bmatrix}.
\end{equation}
Recalling that $M_{c_0}(k_0)$ is singular, we choose  the trivial solution $\hat{\eta}_2(k_0)=\hat{\psi}_2(k_0)=c_1=0$. It remains to consider $k\ge 1$ and $k\ne k_0$. Then $M_{c_0}(k)$ is invertible and we obtain a unique solution $\begin{bmatrix} \hat{\eta}_2(k) \\ \hat{\psi}_2(k)\end{bmatrix}=M_{c_0}(k)^{-1}\hat F(k)$. Since $F$ only has frequencies $k_0$ and $2k_0$, we have $\hat{\eta}_2(k)=\hat{\psi}_2(k)=0$ for  $k\ne 2k_0$. Finally,  at $k=2k_0$,  the system reads
\begin{equation}\label{sys:2k0}
-M_{c_0}(2k_0)\begin{bmatrix}
    \hat{\eta}_2(2k_0)\\
    \hat{\psi}_2(2k_0)
\end{bmatrix}=
    \left[
\begin{array}{cc}
- ic_0  2 k_0 & - 2k_0 \tanh(h2k_0) \\
g & i \gamma \tanh(h2k_0) - ic_02k_0
\end{array}
\right]
\begin{bmatrix}
    \hat{\eta}_2(2k_0)\\
    \hat{\psi}_2(2k_0)
\end{bmatrix}
 =\begin{bmatrix}
\frac{\pi}{i}a \\
\pi b
\end{bmatrix}.
\end{equation}
In the remainder of this appendix, we take the wave number $k_0=1$. For brevity, we denote 
\begin{equation}
\label{def:th}
    \th:=\tanh(h).
\end{equation}
Then, the resonance condition $\det M_{c_0}(1)=0$ reads $c_0^2-\gamma \th c_0-g\th =0$, whence  
\begin{equation}
\label{eq:gamma_by_other_quantities}
    \gamma=\frac{c_0}{\th}-\frac{g}{c_0}.
\end{equation}
 $\tanh(nh)$ can be expressed in terms of  $\th$:
\begin{lemm}
For  $h<\infty$ and $n\in\Nn$, we have
    \begin{equation}\label{tanhnh}
        \tanh(nh)=\frac{(1+\th)^n-(1-\th)^n}{(1+\th)^n+(1-\th)^n}.
    \end{equation}
\end{lemm}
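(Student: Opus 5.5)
We need to prove $\tanh(nh)=\frac{(1+\th)^n-(1-\th)^n}{(1+\th)^n+(1-\th)^n}$. The plan is to pass from $\tanh$ to exponentials, where multiplying the argument by $n$ becomes raising to the $n$-th power. The one fact needed is
\[
\frac{1+\th}{1-\th}=\frac{\cosh h+\sinh h}{\cosh h-\sinh h}=e^{2h},
\]
which follows from $\th=\sinh h/\cosh h$ after multiplying numerator and denominator by $\cosh h$. Note that $1-\th>0$ since $h<\infty$, so the quotient is well defined.

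Next, raise this identity to the $n$-th power to get $\big(\frac{1+\th}{1-\th}\big)^n=e^{2nh}$. Then apply the standard identity
\[
\tanh(nh)=\frac{e^{2nh}-1}{e^{2nh}+1}.
\]
Substituting the power identity and multiplying numerator and denominator by $(1-\th)^n\neq 0$ yields the claimed formula \eqref{tanhnh}.

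No step is a genuine obstacle. The only point to check is the nonvanishing of the denominators: $(1-\th)^n\neq0$ because $h<\infty$, and $(1+\th)^n+(1-\th)^n>0$ because $0\le \th<1$. An alternative route is induction on $n$ via the addition formula $\tanh(a+b)=\frac{\tanh a+\tanh b}{1+\tanh a\tanh b}$, but the exponential argument above is shorter, so that is the one to use.
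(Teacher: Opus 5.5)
Your proof is correct and is essentially the paper's argument. Both pass to exponentials and reduce the claim to $\tanh(nh)=(e^{nh}-e^{-nh})/(e^{nh}+e^{-nh})$: the paper writes $1\pm\th = 2e^{\pm h}/(e^h+e^{-h})$ and cancels the common factor, whereas you use the ratio $(1+\th)/(1-\th)=e^{2h}$.
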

\begin{proof}
  Using that 
   \[
    \th=\frac{e^{h}-e^{-h}}{e^{h}+e^{-h}},\quad 1+\th=\frac{2e^h}{e^h+e^{-h}},\quad 1-\th=\frac{2e^{-h}}{e^h+e^{-h}},
    \]
    we find
    \begin{align*}
        \frac{(1+\th)^n-(1-\th)^n}{(1+\th)^n+(1-\th)^n}
        =\frac{(\frac{2e^h}{e^h+e^{-h}})^n-(\frac{2e^{-h}}{e^h+e^{-h}})^n}{(\frac{2e^h}{e^h+e^{-h}})^n+(\frac{2e^{-h}}{e^h+e^{-h}})^n}
        =\frac{e^{nh}-e^{-nh}}{e^{nh}+e^{-nh}}
        =\tanh(nh).
    \end{align*}
\end{proof}
For $n=2$, \eqref{tanhnh} implies
\begin{align}
    \tanh(2h)=\frac{2\th}{1+\th^2}.
\end{align}
We substitute this into the coefficients $a$ and $b$ in \eqref{eq1_order2} and \eqref{eq2_order2}. Then, after straightforward calculations, we obtain the solution of  \eqref{sys:2k0} in terms of $\th$ as given in the $\eps^2$-coefficients below:
\begin{equation}
    \eta(x)=\eps\Big[\th \cos(x)\Big]+\frac{\eps^2}4\Big[(\frac{c_0^2}{g}-2\th+\frac{c_0^2\th ^2}{g})+\Big(\frac{1}{\th}+\frac{c_0^2 g+\th(g^2-2c_0^4)+c_0^2g\th ^2}{c_0^4}\Big)\cos(2x)\Big]+\rem3,
\end{equation}
\begin{equation}
    \psi(x)=\eps\Big[ c_0\sin(x)\Big]+\eps^2\Big[\frac{c_0^4+c_0^2g\th +(g^2-2c_0^4)\th^2+c_0^2 g\th^3+(g^2+c_0^4)\th^4}{8c_0^3\th^2}\sin(2x)\Big]+\rem3.
\end{equation}
 It remains to find the coefficient $c_2$. Since $c$ only appears in $c\eta_x$, $c\psi_x$, and since $\eta$, $\psi=O(\eps)$, finding $c_2$ requires the third-order expansions of the equations $F_j(\eta, \psi, c)=0$, $j=1, 2$. By inspecting the terms in $F_j$, we see that the $\eps^3$-coefficients $\eta_3$ and $\psi_3$ only appear  linearly in the terms $G[0]\psi_3$, $-g\eta_3$, and $\gamma \p_x^{-1}G[0]\psi_3$. Moreover, the $\eps^3$-terms $c_2\eta_1$ and $c_2\psi_1$ only have frequency $1$. Therefore, it suffices to  calculate the $\eps^3$-coefficient of $\F((F_j(\eta,\psi,c))(1)$  using the above second-order expansions of $\eta$, $\psi$, and the Fourier coefficients $z_1=\frac{\widehat{\eta_3}(1) }{2\pi}$  and $z_2=\frac{\widehat{\psi_3}(1) }{2\pi}$. Using Mathematica to carry out the calculation, we obtain 
\begin{align*}
   [\eps^3] \F(F_1(\eta,\psi,c))(1)&=\frac{\pi  \epsilon ^3 }{8  c_0 ^5 g \th}\Bigg(2 i  c_0 ^8 \th^5+2 i  c_0 ^8 \th^3+16 i  c_0 ^6 g z_1 \th-9 i  c_0 ^6 g \th^4+i  c_0 ^6 g \th^2-i  c_0 ^6 g\\
   &\quad +8 i  c_0 ^5 c_2 g \th^2+16  c_0 ^5 g z_2 \th^2-i  c_0 ^4 g^2 \th^5+6 i  c_0 ^4 g^2 \th^3\\
   &\quad -2 i  c_0 ^4 g^2 \th-i  c_0 ^2 g^3 \th^4-2 i  c_0 ^2 g^3 \th^2-i g^4 \th^3\Bigg),\\
   [\eps^3]\F(F_2(\eta,\psi,c))(1)&=\frac{\pi  \epsilon ^3 }{8  c_0 ^4 g \th^2}\Bigg(2  c_0 ^8 \th^5+2  c_0 ^8 \th^3-2  c_0 ^6 g \th^6-13  c_0 ^6 g \th^4+5  c_0 ^6 g \th^2\\
   &\quad - c_0 ^6 g+8  c_0 ^5 c_2 g \th^2-16  c_0 ^4 g^2 z_1 \th^2+5  c_0 ^4 g^2 \th^5+ c_0 ^4 g^2 \th^3- c_0 ^4 g^2 \th\\
   &\quad +16 i  c_0 ^3 g^2 z_2 \th^3+ c_0 ^2 g^3 \th^6+ c_0 ^2 g^3 \th^4- c_0 ^2 g^3 \th^2-g^4 \th^5\Bigg) .
\end{align*}
We observe that the coefficients of $z_1$ and $z_2$ in the above equations are proportional. Therefore, a linear combination of the two equations eliminates $z_1$ and $z_2$:
\begin{align*}
    0&=-ig[\eps^3]\F(F_1(\eta,\psi,c))(1)+c_0[\eps^3]\F(F_2(\eta,\psi,c))(1)\\
    &=-\frac{\pi  c_0^3}{8 \th^2}+\frac{\pi  \th \left(c_0^8+c_0^3 g^2 (c_0+4 c_2)-g^4\right)}{4 c_0^3 g}+\frac{\pi  \left(-11 c_0^8 g+7 c_0^4 g^3-g^5\right) \th^2}{8 c_0^5 g}\\
     &\quad +\frac{\pi  \left(c_0^8-2 c_0^4 g^2-g^4\right) \th^3}{4 c_0^3 g}+\frac{\pi  \left(c_0^7 g (5 c_0+8 c_2)-3 c_0^4 g^3\right)}{8 c_0^5 g}-\frac{\pi  c_0 g}{4 \th}.
\end{align*}
Solving this equation, we obtain 
\begin{equation}
\label{c_2}
    \begin{aligned}
            c_2&=\frac{1}{8 c_0^5 \th^2 g \left(c_0^2+g \th\right)}\Bigg(c_0 ^8 g+2 c_0 ^6 g^2 \th+\left(3 c_0 ^4 g^3-5 c_0 ^8 g\right) \th^2+\left(-2 c_0 ^{10}-2 c_0 ^6 g^2+2 c_0 ^2 g^4\right) \th^3\\
    &\quad +\left(11 c_0 ^8 g-7 c_0 ^4 g^3+g^5\right) \th^4+\left(-2 c_0 ^{10}+4 c_0 ^6 g^2+2 c_0 ^2 g^4\right) \th^5\Bigg),
    \end{aligned}
\end{equation}
thereby completing the proof of Theorem \ref{thm:Stokes_without_g} (upon setting $g=1$).
%%%%%%%

\subsection*{Acknowledgements}  HQN was partially supported by NSF CAREER Grant DMS-2541807.

%\ifoverleaf
%    \printbibliography
%\else 
%    \bibliographystyle{amsplain} 
%    \bibliography{references}
%\fi

\bibliographystyle{amsplain}
\bibliography{references}

\end{document}